\newcommand{\dd}{\,d}
\newcommand{\eps}{\varepsilon}
\newcommand{\ip}{\!\cdot\!}
\newcommand{\bigoh}{\mathcal O}
\newcommand{\pd}{\partial}
\newcommand{\injects}{\hookrightarrow}
\newcommand{\simplex}{\Delta}
\newcommand{\set}{\mathcal{S}}
\newcommand{\game}{\mathfrak{G}}
\newcommand{\play}{\mathcal{N}}
\newcommand{\act}{\mathcal{A}}
\newcommand{\strat}{\Delta}
\newcommand{\graph}{\mathcal{G}}
\newcommand{\edges}{\mathcal{E}}
\newcommand{\nodes}{\mathcal{V}}
\newcommand{\net}{\mathcal{Q}}
\newcommand{\model}{\mathfrak{C}}
\newcommand{\eq}{\strat\!^{*}}
\newcommand{\pair}{\sigma}
\newcommand{\dkl}{d_{\text{\normalfont KL}}}
\newcommand{\gen}{\mathcal{L}}
\newcommand{\R}{{\mathbb R}}
\DeclareMathOperator{\exclude}{\setminus}
\DeclareMathOperator{\prob}{\mathbf{P}\!}
\DeclareMathOperator{\ex}{\mathbf{E}}
\DeclareMathOperator{\Int}{Int}
\DeclareMathOperator{\bd}{bd}
\DeclareMathOperator{\ess}{ess}
\DeclareMathOperator{\red}{red}
\DeclareMathOperator{\im}{im}
\DeclareMathOperator{\supp}{supp}
\DeclareMathOperator{\class}{class}
\newcommand{\txs}{\textstyle}
\newcommand{\insum}{\sum\nolimits}
\newcommand{\negspace}{\!\!\!}
\definecolor{blueviolet}{rgb}{0.25,0.1,.5}
\definecolor{olivegreen}{rgb}{0,0.5,.2}
\definecolor{orange}{rgb}{1,0.4,0}
\theoremstyle{plain}
\newtheorem{theorem}{Theorem}
\newtheorem{corollary}[theorem]{Corollary}
\newtheorem*{corollary*}{Corollary}
\newtheorem{lemma}[theorem]{Lemma}
\newtheorem{proposition}[theorem]{Proposition}
\theoremstyle{definition}
\newtheorem{definition}[theorem]{Definition}
\newtheorem*{definition*}{Definition}
\theoremstyle{remark}
\newtheorem{remark}{Remark}
\newtheorem*{remark*}{Remark}
\numberwithin{equation}{section}
\numberwithin{theorem}{section}
\begin{document}

\begin{frontmatter}

% TITLE AND AUTHOR INFORMATION
\begin{aug}
\title{Balancing Traffic in Networks:
Redundancy, Learning and the Effect of Stochastic Fluctuations}
\runtitle{Balancing Traffic in Networks}

\author{
\fnms{Panayotis}
\snm{Mertikopoulos}
\thanksref{t1}
\ead[label=e1]{pmertik@phys.uoa.gr}}
%\address{\printead{e1}}
\and
\author{
\fnms{Aris}
\snm{L.}
\snm{Moustakas}
\thanksref{t1}
\ead[label=e2]{arislm@phys.uoa.gr}}
%\address{\printead{e2}}

\runauthor{P. Mertikopoulos and A. L. Moustakas}

\affiliation{National \& Kapodistrian University of Athens}

\thankstext{t1}{Supported in part by the European Commission under grants EU-FET-FP6-IST-034413 (Net-ReFound); the first author was also supported by the Empirikeion Foundation of Athens, Greece.}

\end{aug}

% ABSTRACT
\begin{abstract}
We study the distribution of traffic in networks whose users try to minimise their delays by adhering to a simple learning scheme inspired by the replicator dynamics of evolutionary game theory. The stable steady states of these dynamics coincide with the network's {\em Wardrop equilibria} and form a convex polytope whose dimension is determined by the network's {\em redundancy} (an important concept which measures the ``linear dependence'' of the users' paths). Despite this abundance of stationary points, the long-term behaviour of the replicator dynamics turns out to be remarkably simple: every solution orbit converges to a Wardrop equilibrium.

On the other hand, a major challenge occurs when the users' delays fluctuate unpredictably due to random external factors. In that case, interior equilibria are no longer stationary, but {\em strict} equilibria remain stochastically stable irrespective of the fluctuations' magnitude. In fact, if the network has no redundancy and the users are patient enough, we show that the long-term average of the users' traffic flows converges to the vicinity of an equilibrium, and we also estimate the corresponding invariant measure.
\end{abstract}

% CLASSIFICATION AND KEYWORDS
\begin{keyword}[class=AMS]
\kwd[Primary ]{60H10}
\kwd{60K30}
\kwd{90B20}
\kwd[; secondary ]{60J70}
\kwd{91A22}
\kwd{91A26}
\kwd{37H10.}
\end{keyword}

\begin{keyword}
\kwd{Congestion model}
\kwd{invariant measure}
\kwd{Lyapunov function}
\kwd{Nash equilibrium}
\kwd{recurrence}
\kwd{replicator dynamics}
\kwd{stochastic asymptotic stability}
\kwd{stochastic differential equation}
\kwd{Wardrop equilibrium.}
\end{keyword}

\end{frontmatter}

%----------------------------------------------------------------------
%%% MAIN TEXT
%----------------------------------------------------------------------

%%% INTRODUCTION
%----------------------------------------------------------------------
\section{Introduction}
\label{sec:introduction}

The underlying problem of managing the flow of traffic in a large-scale network is as simple to state as it is challenging to resolve: given the rates of traffic generated by the users of the network, one is asked to identify and realise the most ``satisfactory'' distribution of traffic among the network's routes.

Of course, given that this notion of ``satisfaction'' depends on the users' optimisation criteria, it would serve well to keep a concrete example in mind. Perhaps the most illustrative one is that of the Internet itself, where the primary concern of its users is to minimise the travel times of their data flows. However, since the time needed to traverse a link in the network increases (nonlinearly even) as the link becomes more congested, the users' concurrent minimisation efforts invariably lead to game-like interactions whose complexity precludes even the most rudimentary attempts at coordination. In this way, a traffic distribution will be considered ``satisfactory'' by a user when there is no unilateral move that he could make in order to further decrease the delays (or {\em latencies}) that he experiences.

This Nash-type condition is aptly captured by {\em Wardrop's principle} \citep{Wa52}: given the level of congestion caused by other users, every user seeks to employ the minimum-latency path available to him. As might be expected, this principle has attracted a great deal of interest and it was shown early on that these {\em Wardrop equilibria} can be calculated by solving a convex optimisation problem \citep{BMW56, DS69}. Among others, this characterisation enabled \cite{RT02,RT04} to quantify the efficiency of these equilibrial states by estimating their ``price of anarchy'', i.e. the ratio between the aggregate delay of a flow at Wardrop equilibrium and the minimum achievable (aggregate) latency \citep{KP99}.

Still, the size of large-scale networks makes computing these equilibria a task of considerable difficulty, clearly beyond the users' individual deductive capabilities. Moreover, a user has no incentive to actually play out his component of an equilibrial traffic allocation unless he is convinced that his opponents will also employ theirs (an argument which gains additional momentum if there are multiple equilibria). It is thus more reasonable to take a less centralised approach and instead ask: {\em is there a simple learning procedure which leads users to Wardrop equilibrium?}

\smallskip

Even though the static properties of Wardrop equilibria have been studied quite extensively, this question has been left relatively unexplored. In fact, it was only recently that the work of \cite{Sa01} showed that a good candidate for such a learning scheme would be the {\em replicator dynamics} of evolutionary game theory, a dynamical system that was first introduced by \cite{TJ78} to model the evolution of (nonatomic) populations that interact with one another by means of random matchings in a Nash game. More precisely, these dynamics arise as the byproduct of an ``imitation of the fittest'' process which drives the per capita growth rate of a genotype (strategy) proportionately to the difference between the reproductive fitness (payoff) of the genotype itself and the population average. Thus, owing to this correlation between growth rates and payoffs, the game's Nash equilibria emerge as $\omega$-limit points of the replicator trajectories \textendash\ see also the excellent surveys by \cite{We95} and by \cite{HS98,HS03}.

%a model that was introduced by \cite{TJ78} to model the evolution of (nonatomic) populations that interact with one another by means of random matchings in a Nash game. To be more precise, these dynamics actually arise as the byproduct of an ``imitation of the fittest'' process which drives the per capita growth rate of the population share of a given strategy (genotype) proportionately to the payoff difference between the strategy in question and the average payoff in the population. Thus, owing to this correlation between growth rates and payoffs, the game's Nash equilibria emerge as $\omega$-limit points of the replicator trajectories \textendash~see also the excellent surveys by \cite{We95} and by \cite{HS98,HS03}.

In our congestion setting, these populations correspond to the users' traffic flows, so the convex optimisation formulation of \citeauthor*{BMW56} allows us to recast our problem in terms of a (nonatomic) {\em potential game} \citep{Sa01}. Indeed, Wardrop equilibria can be located by looking at the minimum of the {\em Rosenthal potential} \citep{Ro73} and, hence, Sandholm's analysis shows that they are Lyapunov stable rest points of the replicator dynamics. This fact was also recognized independently by \cite{FV04} who additionally showed that the (interior) solution orbits of the replicator dynamics converge to the set of Wardrop equilibria \textendash~actually, the authors suggest that these orbits converge to a point, but their analysis only holds when there is a unique equilibrium.

Rather surprisingly, when there is not a unique equilibrium, the structure of the Wardrop set itself seems to have been overlooked in the above considerations. Specifically, it has been widely assumed that if the network's delay functions are strictly increasing, then there exists a unique Wardrop equilibrium \citep[for instance, see][Corollary 5.6]{Sa01}. As a matter of fact, this uniqueness property is only true in {\em irreducible} networks, i.e. networks whose paths are ``independent'' of one another (in a sense made precise by Definition \ref{def:redundancy}). In general, the Wardrop set of a network is a convex polytope whose dimension is determined by the network's {\em redundancy}, a notion which quantifies precisely this ``linear dependence''. Nonetheless, despite this added structure, we show that  the expectations of \citeauthor{FV04} are vindicated in that the long-term behaviour of the replicator dynamics remains disarmingly simple: (almost) every replicator orbit converges to a Wardrop flow and not merely to the {\em set} of such flows (Theorem \ref{thm:detconvergence}).

\smallskip

Having said that, the imitation procedure inherent in the replicator dynamics implicitly presumes itself that users have perfectly accurate information at their disposal. Unfortunately however, this assumption is not very realistic in networks which exhibit wild delay fluctuations as the result of interference by random exogenous factors (commonly gathered under the collective moniker ``nature''). In population biology, these disturbances are usually modelled by introducing ``aggregate shocks'' to the replicator dynamics \citep{FH92} and, as one would expect, these shocks complicate the situation considerably. For instance, \cite{Ca00} proved that dominated strategies become extinct in the long run, but only if the variance of the shocks is mild enough compared to the payoffs of the game. More recently, \cite{Im05} showed that even equilibrial play arises over time but, again, conditionally on the noise processes not being too loud \citep[see also][]{BHS08, Im09}. On the other hand, if one interprets the replicator dynamics as the derivative of an {\em exponential learning} procedure and perturbs them accordingly (i.e. not as an evolutionary birth-death process), it was shown that similar rationality properties continue to hold, no matter how loud the noise becomes \citep{GameNets09, MM09}.

All the same, these approaches have chiefly focused on Nash-type games where payoffs are multilinear functions over a product of simplices; for example, payoffs in single-population evolutionary games are determined by the bilinear form which is associated to the matrix of the game. This linear structure simplifies things considerably but, unfortunately, congestion models rarely adhere to it; additionally, the notions of Nash and Wardrop equilibrium are at variance in many occasions, a disparity which also calls for a different approach; and, finally, the way that stochastic fluctuations propagate to the users' choices in a network leads to a new stochastic version of the replicator dynamics where the noise processes are no longer independent across users (different paths might share a common subset of links over which disturbances are strongly correlated). On that account, the effect of stochastic fluctuations in congestion models cannot be understood by simply translating previous work on the stochastic replicator dynamics.

\subsection{Outline}

In this paper, we study the distribution of traffic in networks whose links are subject to constant stochastic perturbations that randomly affect the delays experienced by individual traffic elements. This model is presented in detail in Section \ref{sec:prelims}, where we also develop our game-theoretic machinery: specifically, we introduce the notion of a network's {\em redundancy} in Section \ref{subsec:flows}, and we examine its connection to Wardrop equilibria in Section \ref{subsec:Wardrop}. We then derive the rationality properties of the deterministic replicator dynamics in Section \ref{sec:deterministic}, where we show that (almost) every solution trajectory converges to a Wardrop equilibrium.

Section \ref{sec:stochastic} is devoted to the stochastic considerations which constitute the core of our paper. Our first result is that {\em strict} Wardrop equilibria remain stochastically asymptoticaly stable {\em irrespective} of the fluctuations' magnitude (Theorem \ref{thm:stability}); in fact, if the users are ``patient enough'', we are able to estimate the average time it takes them to hit a neighbourhood of the equilibrium in question (Theorem \ref{thm:timestrict}). In conjunction with stochastic stability, this allows us to conclude that when a strict equilibrium exists, users converge to it almost surely (Corollary \ref{cor:stoconvergence}). On the other hand, given that such equilibria do not always exist, we also prove that the replicator dynamics in irreducible networks are recurrent (again under the assumption that the users are patient enough), and we use this fact to show that the long-term average of their traffic distributions concentrates mass in the neighbourhood of an interior Wardrop equilibrium (Theorem \ref{thm:recurrence}).

%To streamline our presentation, some proofs and technical detailis have been relegated to a series of appendices at the end.

\subsection{Notational Conventions}
\label{subsec:notation}

If $\set = \{s_\alpha\}_{\alpha=0}^{n}$ is a finite set, the vector space spanned by $\set$ over $\R$ is defined to be the set of all formal linear combinations of elements of $\set$ with real coefficients, i.e. the set of all functions $x:\set\to\R$. In tune with standard set-theoretic notation, we will denote this space by $\R^{\set} \equiv \text{Maps}(\set,\R)$. In this way, $\R^{\set}$ admits a canonical basis $\{e_{\alpha}\}_{\alpha=0}^{n}$ consisting of the indicator functions $e_{\alpha}:\set\to\R$ which take the value $e_{\alpha}(s_{\alpha})=1$ on $s_{\alpha}$ and vanish otherwise; in particular, if $x\in\R^{\set}$ has $x(s_{\alpha}) = x_{\alpha}$, we will have $x = \sum_{\alpha}x_{\alpha}e_{\alpha}$. Hence, under the natural identification $s_{\alpha}\mapsto e_{\alpha}$, we will make no distinction between the elements $s_{\alpha}$ of $\set$ and the corresponding basis vectors $e_{\alpha}$ of $\R^{\set}$ \textendash~in fact, to avoid drowning in a morass of indices, we will routinely use $\alpha$ to refer interchangeably to either $s_{\alpha}$ or $e_{\alpha}$, writing e.g. ``$\alpha\in \set$'' instead of ``$s_{\alpha}\in\set$''. In the same vein, we will also identify the set $\simplex(\set)$ of probability measures on $\set$ with the standard $n$-dimensional simplex of $\R^{\set}$: $\simplex(\set) = \{x\in \R^{\set}: \sum_{\alpha} x_{\alpha} =1 \text{ and }x_{\alpha}\geq 0\}$.

Concerning players and their strategies, we will follow the original convention of \cite{Na51} and employ Latin indices ($i,j,\dotsc$) for players while reserving Greek ones ($\alpha,\beta\dotsc$) for their (pure) strategies; also, to differentiate between strategies, we will use $\alpha,\beta,\ldots$ for indices that start at $0$ and $\mu,\nu,\ldots$ for those that start at $1$. Moreover, if the players' action sets $\act_{i}$ are disjoint (as is typically the case), we will identify their union $\bigcup_{i}\act_{i}$ with their \emph{disjoint union} $\act\equiv\coprod_{i}\act_{i}= \bigcup_{i}\big\{(\alpha,i):\alpha\in\act_{i}\big\}$ by mapping $\alpha\in\act_{i}\mapsto(\alpha,i)\in\act$. Hence, if $\{e_{i\alpha}\}$ is the natural basis of $\R^{\act_{i}}$ and $\{e_{\alpha}\}$ is the corresponding basis of $\R^{\act}\cong\prod_{i}\R^{\act_{i}}$, we will occasionally drop the index $i$ altogether and write $x=\sum_{\alpha} x_{\alpha} e_{\alpha}\in\R^{\act}$ instead of $x=\sum_{i,\alpha} x_{i\alpha} e_{i\alpha}\in\prod_{i}\R^{\act_{i}}$. Similarly, when it is clear from the context that we are summing over the strategy set $\act_{i}$ of player $i$, we will use the shorthand $\sum_{\alpha}^{i} \equiv \sum_{\alpha\in\act_{i}}$.

Finally, if $X(t)$ is some stochastic process in $\R^{n}$ starting at $X(0) = x$ and there is no doubt that we are referring to the process $X$, its law will be denoted by $\prob_{x}$. In that case, we will also employ the term ``almost surely'' instead of the somewhat unwieldy ``$\prob_{x}$-almost surely''.

%%% PRELIMINARIES
%----------------------------------------------------------------------
\section{Preliminaries}
\label{sec:prelims}

%In this section we present the formal game-theoretic framework that we will use throughout the rest of our paper (Section \ref{subsec:games}) and we then proceed to examine the special case of {\em congestion games} which constitute the focus of our considerations (Sections \ref{subsec:flows} and \ref{subsec:Wardrop}).

\subsection{Games in Normal Form}
\label{subsec:games}

Our starting point for the definition of a game in normal form will be a set of {\em players} $\play$, together with a finite measure $\nu$ on $\play$ which ``accounts'' for all players $i\in\play$ (in the sense that the singletons $\{i\}\subseteq\play$ are all $\nu$-measurable).

The players' possible actions in the game will then be represented by their {\em strategy sets} $\strat_{i}$, $i\in\play$. For our purposes, we will assume that these sets are locally compact Hausdorff spaces and that the relative topologies induced on $\strat_{i}\cap\strat_{j}$ agree for all $i,j\in\play$. Thanks to this compatibility conditon, $\strat_{0}\equiv\bigcup_{j}\strat_{j}$ inherits a natural Borel structure arising from the union topology (the finest topology in which the inclusions $\strat_{i}\injects\strat_{0}$ are continuous) and, in this way, an admissible {\em strategy profile} $x\in\prod_{i}\strat_{i}$ will just be a measurable function $x:\play\to\strat_{0}$ which maps $i\mapsto x_{i}\in\strat_{i}$ for all players $i\in\play$. For technical reasons, we will also require that the push-forward measure $x_{*}\nu$ induced on $\strat_{0}$ by $x$ (given by $x_{*}\nu(U) = \nu(x^{-1}(U))$ for any Borel $U\subseteq\strat_{0}$) be inner regular, and, hence, Radon (since $\nu$ is finite).

As is customary, we will identify two profiles which agree $\nu$-almost everywhere, except when we need to focus on the strategy of a particular player $i\in\play$ against that of his {\em opponents} $\play_{-i}\equiv\play\exclude\{i\}$; in that case, we will use the shorthand $(x_{-i};q_{i})$ to denote the profile which agrees with $x$ on $\play_{-i}$ ($\nu$-a.e.) and maps $i\mapsto q_{i}\in\strat_{i}$. The set $\strat$ of all such profiles $x\in\prod_{i}\strat_{i}$ will then be referred to as the {\em strategy space} of the game and is itself a Borel space because it inherits the subspace topology from the product $\prod_{i}\strat_{i}$.

Bearing all this in mind, the fitness of the players' strategic choices will be determined by their {\em payoff functions} (or {\em utilities}) $u_{i}:\strat\to\R$, $i\in\play$; in particular, $u_{i}(x)\equiv u_{i}(x_{-i};x_{i})$ will simply represent the reward that player $i\in\play$ receives in the strategy profile $x\equiv(x_{-i};x_{i})\in\strat$, i.e. when he plays $x_{i}\in\strat_{i}$ against his opponents' strategy $x_{-i}\in\prod_{j\neq i}\strat_{j}$. The only further assumptions that we will make is that these payoff functions be (Borel) measurable and that $u_{i}(x_{-i};x_{i}) = u_{i}(x_{-i}';x_{i})$ whenever $x$ and $x'$ agree $\nu$-a.e. on $\play_{-i}$.

\smallskip

This collection of {\em players} $i\in\play$, their {\em strategy sets} $\strat_{i}$, and their {\em payoff functions} $u_{i}:\strat\to\R$ will be our working definition for a {\em game in normal form}, usually denoted by $\game\equiv\game\big(\play,\strat,u\big)$. Additionally, if the payoff functions $u_{i}:\strat\to\R$ happen to be continuous, the game $\game$ will be called {\em continuous} as well.

\smallskip

Needless to say, this abstract definition might appear somewhat opaque, so we will immediately proceed with a few important examples to clarify the concept.

\subsubsection{$N$-person Games}

As the name suggests, the players here are indexed by the finite set $\play = \{1,2,\ldots N\}$ (endowed with the usual counting measure) and the game's strategy space will be the finite product $\strat\equiv\prod_{i}\strat_{i}$ (thus doing away with some of the technical subtleties present in the more general definition).

This point is where we recover the original scenario of \cite{Na51}. To see how, assume that every player $i\in\play$ comes with a finite set $\act_{i}$ of {\em actions} (or {\em pure strategies}) which can be ``mixed'' according to some probability distribution $x_{i}\in\simplex(\act_{i})$. In this interpretation, the players' strategy sets are just the simplices $\strat_{i}\equiv\simplex(\act_{i})$ and their payoff functions $u_{i}:\strat\equiv\prod_{i}\strat_{i}\to\R$ are given by the multilinear expectations:
\begin{equation}
u_{i}(x) = u_{i}(x_{1},\ldots x_{N})
= \sum_{\alpha_{1}\in\act_{1}} \negspace\cdots\negspace \sum_{\alpha_{N}\in\act_{N}} x_{1,\alpha_{1}}\cdots x_{N,\alpha_{N}} u_{i,\alpha_{1}\ldots \alpha_{N}},
\end{equation}
where $x_{i}=\sum_{\alpha}^{i} x_{i\alpha} e_{i\alpha}$ in the standard basis $\{e_{i\alpha}\}$ of $\R^{\act_{i}}$ and $u_{i,\alpha_{1}\ldots\alpha_{N}}$ is the reward that player $i$ would obtain by choosing $\alpha_{i}\in\act_{i}$ against his opponents' action $\alpha_{-i}\in\act_{-i}\equiv\prod_{j\neq i}\act_{j}$. Because of this (multi)linear structure, we will commonly refer to Nash-type games as {\em linear games} to contrast them with more general $N$-person games where payoffs and strategy sets might fail to have any sort of linear structure \textendash~as is the case for example with {\em concave games} \citep{Ro65}.

\subsubsection{Population Games}

The cornerstone of evolutionary game theory concerns games played by an uncountable number of players \textendash~for instance, see \cite{Sc73}. As such, these nonatomic {\em population games} require the full breadth afforded by our more abstract definition.

\smallskip

The first piece of additional structure encountered in these games is a measurable partition $\play=\bigcup_{r=1}^{N}\play_{r}$ of the player set $\play$ into $N$ disjoint {\em populations} (or {\em classes}) $\play_{r}\subseteq\play$; accordingly, every player $i\in\play$ belongs to a unique class $\play_{r}$ which we denote by $\class(i)$. Each of these populations is then ``measured'' by the corresponding restriction $\nu_{r}$ of the measure $\nu$ on $\play_{r}$ (i.e. $\nu_{r}(B) = \nu(B\cap\play_{r})$ for any Borel $B\subseteq\play$), and the basic underlying assumption is that these measures are nonatomic.

The second fundamental assumption is that this classification of players also determines how they interact with their environment and with each other. More precisely, this means that the strategy sets of two players that belong to the same population coincide: $\strat_{i}=\strat_{j}$ whenever $\class(i) = \class(j)$. Because of this, we will write $\act_{r}$ for the common strategy set of the $r$-th population and $\act_{0}$ for the corresponding union: $\act_{0}=\bigcup_{r=1}^{N}\act_{r}=\bigcup_{i\in\play}\strat_{i}$.

\vskip2pt

Now, every strategy profile $x:\play\to\act_{0}$ pushes forward a (Radon) measure $\hat x_{r}$ on $\act_{r}$ in the usual way:
\begin{equation}
\hat x_{r}(U)\equiv (x_{*}\nu_{r})(U)=\nu_{r}(x^{-1}(U)) = \nu\{i\in\play_{r}: x_{i}\in U\}
\end{equation}
for any Borel $U\subseteq\act_{r}$ \textendash\ in other words, $\hat x_{r}(U)$ is just the measure of the players in the $r$-th population whose chosen strategy lies in $U\subseteq\act_{r}$. Then, the final (and perhaps most significant) requirement in population games is that the players' payoffs depend only on the {\em strategy distribution} $\hat x = (\hat x_{1},\ldots \hat x_{N})$ and not on the players' individual strategic choices. Specifically, if $P_{0}(\act)$ denotes the space of all such strategy distributions equipped with the topology of vague convergence, we assume that there exist continuous functions $\hat u_{r}:P_{0}(\act)\times\act_{r}\to\R$, $r=1,\ldots N$, such that:
\begin{equation}
u_{i}(x) = \hat u_{r}(\hat x; x_{i}) \text{ for all $i\in\play_{r}$}.
\end{equation}
Consequently, as long as the overall strategy distribution $\hat x$ stays the same, payoffs remain unaffected even by positive-mass migrations of players from one strategy to another (and not only by migrations of measure zero).

\smallskip

Again, it would serve well to illustrate this abstract definition by means of a more concrete example. To wit, in evolutionary game theory, populations are usually represented by the intervals $\play_{r}=[0,m_{r}]$ where $m_{r}>0$ denotes the ``mass'' of the population under Lebesgue measure. The strategy spaces $\act_{r}$ are typically assumed to be finite, so that a strategy distribution is simply a point in the (finite-dimensional) product of simplices $\prod_{r}m_{r}\simplex(\act_{r})$. Hence, if player $i\in\play_{r}$ picks the strategy $\alpha\in\act_{r}$, his payoff will be given by
\begin{equation}
u_{r\alpha}(x) \equiv u_{r}(x;\alpha),
\end{equation}
where, in a slight abuse of notation, we removed the hats from $\hat u_{r}$ and $\hat x$ in order to stress that they are the fundamental quantities that describe the game (it will always be clear from the context whether we are referring to the distribution $\hat x\in P_{0}(\act)$ or to the actual strategy profile $x:\play\to\act_{0}$).

This choice of notation is very suggestive for another reason as well: if we set $\strat_{r} \equiv m_{r}\simplex(\act_{r})$, then these simplices may be taken as the strategy sets of an associated $N$-person game whose players are indexed by $r=1,2\ldots N$ (that is, they correspond to the populations themselves). The only thing needed to complete this description is to define the payoff functions $u_{r}:\strat\equiv\prod_{r}\strat_{r}\to\R$ in this picture, and a natural choice would be to take the {\em population averages}:
\begin{equation}
u_{r}(x) = \frac{1}{m_{r}} \insum^{r}_{\alpha} x_{r\alpha} u_{r\alpha}(x),
\end{equation}
where $x_{r\alpha}$ are the coordinates of $x$ in $\strat$. However, it is worth keeping in mind that, depending on the situation at hand, this need not be the only reasonable choice for a payoff function (we will explore this issue further in the next section).

\paragraph{Potential Games}

An important subclass of population games arises when the payoffs $u_{r\alpha}$ satisfy the closedness condition:
\begin{equation}
\label{eq:closed}
\frac{\pd u_{r\alpha}}{\pd x_{s\beta}} = \frac{\pd u_{s\beta}}{\pd x_{r\alpha}}
\text{ for all populations $r,s$ and for all strategies $\alpha\in\act_{r},\beta\in\act_{s}$.}
\end{equation}
This condition is commonly referred to as ``externality symmetry'' \citep{Sa01} and it describes games where a marginal increase in the population of players using strategy $\alpha$ has the same effect on the payoffs to players playing strategy $\beta$ as the converse increase. Clearly, since the strategy distributions of these games live in the simply connected polytope $\strat=\prod_{r}\strat_{r}$, condition (\ref{eq:closed}) amounts to the existence of a {\em potential function} $F:\strat\to\R$ such that:
\begin{equation}
\label{eq:potential}
u_{r\alpha}(x) = - \frac{\pd F}{\pd x_{r\alpha}}.
\end{equation}
Hence, if a player $i\in\play_{r}$ makes the switch $\alpha\to\beta$, his payoff will change by:
\begin{equation}
u_{r\beta}(x) - u_{r\alpha}(x)
= -\left(\frac{\pd F}{\pd x_{r\beta}} - \frac{\pd F}{\pd x_{r\alpha}}\right)
=-dF(e_{r\beta}-e_{r\alpha}),
\end{equation}
where $\{e_{r\beta}\}$ denotes the standard basis of $\prod_{r}\R^{\act_{r}}$. In other words, the strategy migration $\alpha\to\beta$ is profitable to a player iff the direction $e_{r\beta}-e_{r\alpha}$ descends the potential $F$. This property of potential games will be extremely important for our purposes and its ramifications underlie a large part of our work.

\subsubsection{Nash Equilibrium and Wardrop's Principle}

Under the umbrella of rationality, selfish players will seek to play those strategies which deliver the best rewards against the choices of their opponents. This leads to the celebrated notion of a \emph{Nash equilibrium}, i.e. a strategy profile $q$ which discourages unilateral deviations:
\begin{equation}
\label{eq:NEQ}
\tag{NEQ}
u_{i}(q) \geq u_{i}(q_{-i};q_{i}')\quad\text{for almost every $i\in\play$ and all strategies $q_{i}'\in\strat_{i}$}
\end{equation}
(see also \cite{Sc73} or \cite{Mi00}).

The seminal result of \cite{Na51} was that $N$-person linear games always possess equilibria of this kind. \cite{Ro65} subsequently extended this result to the class of {\em concave} games (continuous concave payoffs over convex strategy sets), while \cite{Sc73} essentially settled the issue for population games with finite strategy sets \cite[see also][]{AK86}.

In this last instance, Nash equilibria are aptly captured by {\em Wardrop's principle}:
\begin{equation}
\label{eq:Wardrop}
%\tag{WEQ}
u_{r\alpha}(q) \geq u_{r\beta}(q) \text{ for all $\alpha,\beta\in\act_{r}$ s.t. $q$ assigns positive mass to $\alpha$.}
\end{equation}
To see this, note that if $\alpha\in\act_{r}$ has positive measure in the strategy distribution $q$, then there exists a player $i\in\play_{r}$ (actually a positive mass of such players) with $q_{i}=\alpha$ and such that (\ref{eq:NEQ}) holds. Hence, for every $\beta\in\act_{r}$, we immediately get:
\begin{equation}
u_{r\alpha}(q)
= u_{i}(q_{-i};\alpha)
\geq u_{i}(q_{-i};\beta)
= u_{r\beta}(q).
\end{equation}

If the game in question is also a potential one, we have seen that beneficial migrations descend the potential function, so the minima of the potential correspond to strategy distributions where no unilateral improvement is possible. In fact, the Kuhn-Tucker conditions for the game's potential coincide precisely with the Wardrop characterisation (\ref{eq:Wardrop}) and, hence, the game's equilibria will be the critical points of the potential \cite[Proposition 3.1]{Sa01}.

On account of the above, the equilibrium characterisation (\ref{eq:Wardrop}) will be central in our analysis, so we will examine it in depth in the sections that follow. En passant, we only note here that a similar condition can be laid down for population games with continuous strategy sets. This case has recently attracted quite a bit of interest, but since we will not need this added generality, we will not press the issue further~\textendash~ see instead \cite{Cr05} or \cite{HOR09}.

\subsection{Networks and Flows}
\label{subsec:flows}

Stated somewhat informally, our chief interest lies in networks whose nodes produce traffic that seeks to reach its destination as quickly as possible. However, since the time taken to traverse a path in a network increases as the network becomes congested, it is hardly an easy task to pick the ``path of least resistance'' \textendash~especially given that users compete against each other in their endeavours. As a result, the game-theoretic setup of the previous section turns out to be remarkably appropriate for the analysis of these traffic flows.

\smallskip

Following \cite{RT02, RT04}, let $\graph\equiv\graph(\nodes,\edges)$ be a (finite) directed graph with node set $\nodes$ and edge set $\edges$, and let $\pair=(v,w)$ be an \emph{origin-destination} pair in $\graph$ (i.e. an ordered pair of nodes $v,w\in\nodes$ that can be joined by a path in $\graph$). Suppose further that the origin $v$ of $\pair$ outputs traffic towards the destination node $w$ at some rate $\rho>0$; then, the pair $\sigma$ together with the rate $\rho$ will be referred to as a \emph{user} of $\graph$. In this way, a \emph{network} $\net\equiv\net(\play,\act)$ in $\graph$ will consist of a set of \emph{users} $\play$ (indexed by $i=1,\ldots N$), together with an associated collection $\act\equiv\coprod_{i}\act_{i}$ of sets of paths (or \emph{routes}) $\act_{i}=\{\alpha_{i,0},\alpha_{i,1}\ldots\}$ joining $v_{i}$ to $w_{i}$ (where $\sigma_{i} = (v_{i},w_{i})$ is the origin-destination pair of user $i\in\play$).
%consisting of a set $\play$ of users indexed by $i=1,\ldots N$, each with an associated set of paths (or \emph{routes}) $\act_{i}=\{\alpha_{0}^{i},\alpha_{1}^{i}\ldots\}$ that join $v_{i}$ to $w_{i}$ (where $\sigma_{i} = (v_{i},w_{i})$ is the origin-destination pair corresponding to user $i\in\play$).

Two remarks of a book-keeping nature are now in order: first, since we will only be interested in users with at least a modicum of choice on how to route their traffic, we will take $|\act_{i}|\geq 2$ for all $i$. Secondly, we will be assuming that the origin-destination pairs of distinct users are themselves distinct. Fortunately, neither assumption is crucial: if there is only one route available to user $i$, the traffic rate $\rho_{i}$ can be considered as a constant load on the route; and if two users $i,j\in\play$ with rates $\rho_{i}, \rho_{j}$ share the same origin-destination pair, we will replace them by a single user with rate $\rho_{i}+\rho_{j}$ (see also Section \ref{subsec:Wardrop}). This means that the sets $\act_{i}$ can be assumed disjoint and, as a pleasant byproduct, the path index $\alpha\in\act_{i}$ fully characterizes the user $i$ to whom it belongs \textendash~cf. the conventions of Section \ref{subsec:notation}.

So, if $x_{i\alpha}\equiv x_{\alpha}$ denotes the amount of traffic that user $i$ routes via the path $\alpha\in\act_{i}$, the corresponding traffic flow may be represented as $x_{i} = \sum_{\alpha}^{i} x_{i\alpha} e_{i\alpha}$, where $\{e_{i\alpha}\}$ is the standard basis of the space $V_{i}\equiv\R^{\act_{i}}$. However, for such a flow to be admissible, we must also have $x_{i\alpha}\geq 0$ and $\sum^{i}_{\alpha} x_{i\alpha} = \rho_{i}$; hence, the set of admissible flows for user $i$ will be the simplex $\strat_{i} \equiv \rho_{i} \simplex (\act_{i}) = \big\{x_{i}\in V_{i}: x_{i\alpha}\geq 0\text{ and }\sum^{i}_{\alpha} x_{i\alpha} = \rho_{i} \big\}$. Then, by collecting all these individuals flows in a single profile, a \emph{flow in the network} $\net$ will simply be a point $x=\sum_{i}x_{i}\in\strat\equiv\prod_{i}\strat_{i}$.

An alternative (and very useful!) description of a flow $x\in\strat$ can be obtained by looking at the traffic load that the flow induces on the edges of the network, i.e. at the amount of traffic $y_{r}$ that circulates in each edge $r\in\edges$ of $\graph$. In particular:
\begin{equation}
\label{eq:load}
y_{r} = \insum_{i} y_{ir}
= \insum_{i} \insum^{i}_{\alpha\ni r} x_{i\alpha}
\end{equation}
where $y_{ir}=\sum^{i}_{\alpha\ni r} x_{i\alpha}$ is the load induced on $r\in\edges$ by the individual flow $x_{i}\in\strat$. In this manner, a very important question that arises is whether these two descriptions are equivalent; put differently, whether one can recover the flow distribution $x\in\strat$ from the loads $y_{r}$ on the edges of the network.

To answer this question, let $\{\eps_{r}\}$ be the standard basis of the space $W\equiv\R^{\edges}$ spanned by the edges $\edges$ of $\graph$ and consider the \emph{indicator map} $P^{i}:V_{i}\to W$ which sends a path $\alpha\in\act_{i}$ to the sum of its constituent edges: $P^{i}(e_{i\alpha}) = \sum_{r\in\alpha} \eps_{r}$; obviously, if we set $P^{i}(e_{i\alpha}) = \sum_{r} P^{i}_{r\alpha} \eps_{r}$, we see that the entries of $P^{i}$ will be $P^{i}_{r\alpha} = 1$ if $r\in\alpha$ and $0$ otherwise. We can then aggregate this construction over all $i\in\play$ by considering the product space $V\equiv \R^{\act}\cong \prod_{i} V_{i}$ and the corresponding indicator matrix $P = P^{1}\oplus\cdots\oplus P^{N}$ whose entries take the value $P_{r\alpha}=1$ if the path $\alpha\in\act$ employs the edge $r$ and vanish otherwise. By doing just that, (\ref{eq:load}) takes the simpler form $y_{r} = \sum_{\alpha} P_{r\alpha} x_{\alpha}$ or, even more succinctly, $y=P(x)$. Therefore, the question of whether a flow can be recovered from a load profile can be answered in the positive if the indicator map $P:V\to W$ is injective.

This, however, is not the end of the matter because the individual flows $x_{i}\in\strat_{i}$ actually live in the affine subspaces $p_{i}+Z_{i}$ where $p_{i} = \frac{\rho_{i}}{|\act_{i}|}\sum^{i}_{\alpha}e_{i\alpha}$ is the barycentre of $\strat_{i}$ and $Z_{i} \equiv T_{p_{i}}\strat_{i} = \{z_{i}\in V_{i}: \sum^{i}_{\alpha} z_{i\alpha}=0\}$ is the tangent space to $\strat_{i}$ at $p_{i}$ \textendash~it is also worth keeping in mind that if we set $\act_{i}^{*} = \act_{i}\exclude\{\alpha_{i,0}\}$, then $Z_{i}\cong \R^{\act_{i}^{*}}$. As a result, what is actually of essence here is the action of $P$ on the subspaces $Z_{i}\leq V_{i}$, i.e. the restriction $Q\equiv P|_{Z}:Z\to W$ of $P$ on the subspace $Z\equiv T_{p}\strat \cong \prod_{i}Z_{i}$, where $p=(p_{1},\ldots p_{N})$ is the barycentre of $\strat$. In this way, any two flows $x,x'\in\strat$ will have $z=x'-x\in Z$ and the respective loads $y,y'\in W$ will satisfy:
\begin{equation}
y' - y = P(x')-P(x) = P(z) = Q(z),
\end{equation}
so that $y'=y$ iff $x'-x\in \ker Q$. Under this light, it becomes clear that a flow $x\in\strat$ can be recovered from the corresponding load profile $y\in W$ if and only if $Q$ is injective. For this reason, the map $Q:Z\to W$ will be called the \emph{redundancy matrix of the network $\net$}, giving rise to:
\begin{definition}
\label{def:redundancy}
Let $\net$ be a network in a graph $\graph$ and let $Q$ be the redundancy matrix of $\net$. The \emph{redundancy} $\red(\net)$ of $\net$ is defined to be:
\begin{equation}
\red(\net) \equiv \dim (\ker Q).
\end{equation}
If $\red(\net)=0$, the network $\net$ will be called \emph{irreducible}; otherwise, $\net$ will be called \emph{reducible}.
\end{definition}

\begin{figure}
\subfigure[raggedright][An irreducible network: $\red(\net)=0$.]{
\label{subfig:irrnet}
\begin{tikzpicture}
[scale=0.9,
nodestyle/.style={circle,draw=black,fill=gray!5, inner sep=1pt},
edgestyle/.style={->},
>=stealth]

\coordinate (A) at (-2.598,0);
\coordinate (B) at (0,1.5);
\coordinate (C) at (0,-1.5);
\coordinate (D) at (2.598,0);

\node (A) at (A) [nodestyle] {.};
\node (B) at (B) [nodestyle] {.};
\node (C) at (C) [nodestyle] {.};
\node (D) at (D) [nodestyle] {.};

\node (ceiling) at (0,2) {};
\node (floor) at (0,-2.5) {};

\node (legend) at (0,-2) {\phantom{$\alpha_{1}$} No linearly dependent paths.\phantom{$\alpha_{1}$}};

\draw[edgestyle] (A) to node[right] {} (B);
\draw[edgestyle] (A) to node[above] {} (C);
\draw[edgestyle] (B) to node[below] {} (D);
\draw[edgestyle] (C) to node[below] {} (B);
\draw[edgestyle] (C) to node[right] {} (D);

\draw[dashed,red,->] (A) .. controls ($0.4*(A)+0.6*(B)+(0,-0.25)$) .. 
node[midway, below] {$\quad\alpha_{1,0}$} (B);
\draw[dashed,red,->] (A) .. controls ($(C)+0.1*(B)$) and ($(C)+0.1*(A)$)..
node[near start, above] {$\alpha_{1,1}$} (B);

\draw[densely dashed,blue,->] (C) .. controls ($(B)+0.1*(D)$) and ($(B)+0.1*(C)$)..
node[near end, below] {$\alpha_{2,1}$} (D);
\draw[densely dashed,blue,->] (C) .. controls ($0.4*(C)+0.6*(D)+(0,0.25)$) ..
node[midway, left] {$\alpha_{2,0}\,\,$} (D);

\end{tikzpicture}
}
\qquad
\subfigure[raggedright][A reducible network: $\red(\net)=1$.]{
\label{subfig:rednet}
\begin{tikzpicture}
[scale=0.9,nodestyle/.style={circle,draw=black,fill=gray!10, inner sep=1pt},
edgestyle/.style={->},
>=stealth]

\coordinate (A) at (-2.598,0);
\coordinate (B) at (0,1.5);
\coordinate (C) at (0,-1.5);
\coordinate (D) at (2.598,0);

\node (A) at (A) [nodestyle] {.};
\node (B) at (B) [nodestyle] {.};
\node (C) at (C) [nodestyle] {.};
\node (D) at (D) [nodestyle] {.};

\node (ceiling) at (0,2) {};
\node (floor) at (0,-2.5) {};

\node (legend) at (0,-2)
{\phantom{l}$\alpha_{1,0} + \alpha_{2,1} + \alpha_{3,1}
= \alpha_{1,1} + \alpha_{2,0} + \alpha_{3,0}$\phantom{p}};

\draw[edgestyle] (A) to node[right] {} (B);
\draw[edgestyle] (A) to node[above] {} (C);
\draw[edgestyle] (B) to node[below] {} (D);
\draw[edgestyle] (C) to node[below] {} (B);
\draw[edgestyle] (C) to node[right] {} (D);

\draw[dashed,red,->] (A) .. controls ($0.4*(A)+0.6*(B)+(0,-0.25)$) .. 
node[midway, below] {$\quad\alpha_{1,0}$} (B);
\draw[dashed,red,->] (A) .. controls ($(C)+0.1*(B)$) and ($(C)+0.1*(A)$)..
node[near start, above] {$\alpha_{1,1}$} (B);

\draw[densely dashed,blue,->] (C) .. controls ($(B)+0.1*(D)$) and ($(B)+0.1*(C)$)..
node[near end, below] {$\alpha_{2,1}\,$} (D);
\draw[densely dashed,blue,->] (C) .. controls ($0.4*(C)+0.6*(D)+(0,0.25)$) ..
node[midway, left] {$\alpha_{2,0}\,\,$} (D);

\draw[densely dashed,olivegreen,->] (A) .. controls ($(B)+(0,0.75)$)..
node[near start, left] {$\alpha_{3,0}\,$} (D);
\draw[densely dashed,olivegreen,->] (A) .. controls ($(C)+(0,-0.75)$)..
node[near end, right] {$\,\,\alpha_{3,1}$} (D);

\end{tikzpicture}
}
\caption{The addition of a user may increase the redundancy of a network.}
\label{fig:network}
\end{figure}
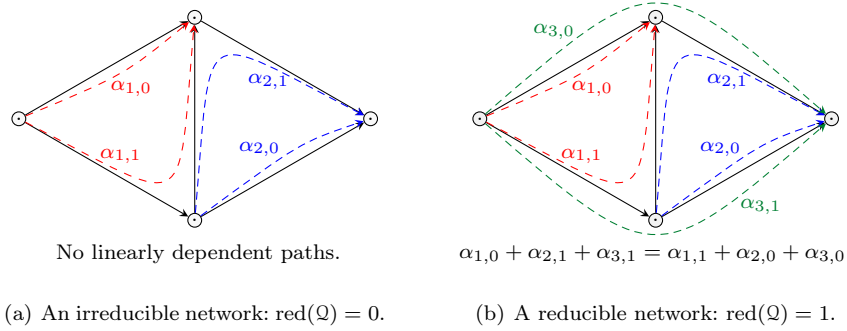

The rationale behind this terminology should be clear enough: when a network $\net$ is reducible, some of its routes are ``linearly dependent'' and the respective directions in $\ker Q$ are ``redundant'' (in the sense that they are not reflected on the edge loads). By comparison, the degrees of freedom of irreducible networks are all active and any statement concerning the network's edges may be translated to one concerning its routes.

This dichotomy between reducible and irreducible networks will be quite significant for our purposes, so it is worth dwelling on Definition \ref{def:redundancy} for a bit more; specifically, it will be important to have a simple recipe with which to compute the redundancy matrix $Q$ of a network $\net$. To that end, let $Q^{i}\equiv P^{i}|_{Z_{i}}$ be the restriction of $P^{i}$ on $Z_{i}$ and, as before, let $\{e_{i,0}, e_{i,1},\ldots\}$ be the standard basis of $V_{i}=\R^{\act_{i}}$. Then, the vectors $\tilde{e}_{i\mu} = e_{i\mu} - e_{i,0}$, $\mu\in\act_{i}^{*}\equiv\act_{i}\exclude\{0\}$, constitute a basis for $Z_{i}$ and it is easy to see that the matrix elements of $Q^{i}$ in this basis will be given by:
\begin{equation}
\label{eq:Qcomponents}
Q^{i}_{r\mu} = P^{i}_{r\mu} - P^{i}_{r,0}.
\end{equation}

The above suggests that if there are too many users in a network, then it is highly unlikely that the network will be irreducible. Indeed, we have:
\begin{proposition}
\label{prop:reducibility}
Let $\net(\play,\act)$ be a network in the graph $\graph(\nodes,\edges)$and let $\edges'\subseteq\edges$ be the set of edges that are present in $\net$. Then:
\begin{equation}
\red(\net) \geq |\play|-|\edges'|.
\end{equation}
Hence, a network will always be reducible if the number of users exceeds the number of available links.
\end{proposition}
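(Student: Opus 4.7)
The plan is to apply the rank-nullity theorem to the redundancy matrix $Q:Z\to W$ and then bound both the domain dimension from below and the rank from above. By definition, $\red(\net) = \dim(\ker Q) = \dim Z - \rank Q$, so it suffices to show that $\dim Z \geq |\play|$ and $\rank Q \leq |\edges'|$.

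For the first bound, recall from Section \ref{subsec:flows} that $Z = \prod_{i}Z_{i}$ with $Z_{i} = T_{p_{i}}\strat_{i}\cong\R^{\act_{i}^{*}}$, so $\dim Z_{i} = |\act_{i}|-1$. Since the standing convention is that every user has at least two available routes, i.e. $|\act_{i}|\geq 2$ for all $i\in\play$, we immediately obtain
\begin{equation*}
\dim Z = \insum_{i} (|\act_{i}|-1) \geq |\play|.
\end{equation*}

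For the second bound, observe that every path $\alpha\in\act_{i}$ that actually appears in $\net$ uses only edges in $\edges'$, so the indicator map $P^{i}:V_{i}\to W$ takes values in the subspace $W' = \R^{\edges'}\leq W$. The same is therefore true of its restriction $Q^{i}=P^{i}|_{Z_{i}}$ and of the aggregate $Q = Q^{1}\oplus\cdots\oplus Q^{N}$, so $\im Q \subseteq W'$ and hence $\rank Q \leq \dim W' = |\edges'|$.

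Combining these two inequalities via rank-nullity yields $\red(\net)\geq |\play|-|\edges'|$, which immediately implies that $\red(\net)\geq 1$ whenever $|\play|>|\edges'|$, i.e. the network is reducible whenever the number of users exceeds the number of available links. No step here is especially delicate: the only thing to be careful about is the harmless observation that $\im Q$ lies in $\R^{\edges'}$ rather than in the full space $\R^{\edges}$, which is what allows us to sharpen the trivial bound $\rank Q \leq |\edges|$ to the stated $\rank Q \leq |\edges'|$.
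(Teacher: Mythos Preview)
Your proof is correct and follows essentially the same approach as the paper: both apply rank--nullity to $Q$, bound $\dim Z = \sum_{i}(|\act_{i}|-1) \geq |\play|$ using the standing assumption $|\act_{i}|\geq 2$, and bound $\rank Q \leq |\edges'|$ by observing that $\im Q$ lies in the span of $\edges'$.
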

\begin{proof}
From the definition of $Q:Z\to W$ we can easily see that $\im Q$ is contained in the subspace of $W$ that is spanned by $\edges'$; furthermore, since every user has $|\act_{i}|\geq 2$ routes to choose from, it follows that $\dim Z = \sum_{i}\left(|\act_{i}| - 1\right) \geq |\play|$. Therefore: $\red(\net) = \dim(\ker Q) = \dim Z - \dim(\im Q) \geq |\play| - |\edges'|$.
\end{proof}

\subsection{Congestion Models and Equilibrium}
\label{subsec:Wardrop}

The time spent by a traffic element on an edge $r\in\edges$ of the graph $\graph$ will be a function $\phi_{r}(y_{r})$ of the traffic load $y_{r}$ on the edge in question -- for example, if the edge represents an M/M/1 queue with capacity $\mu_{r}$, then $\phi_{r}(y_{r}) = 1/(\mu_{r} - y_{r})$. In tune with tradition, we will assume that these \emph{latency} (or \emph{delay}) functions are strictly increasing, and also, to keep things simple, that they are at least $C^{1}$ with $\phi_{r}'>0$.

On that account, the time needed to traverse an entire route $\alpha\in\act_{i}$ will be:
\begin{equation}
\label{eq:delay}
\omega_{i\alpha}(x) = \insum_{r\in\alpha} \phi_{r}(y_{r}) = \insum_{r} P^{i}_{r\alpha} \phi_{r}(y_{r}),
\end{equation}
where as before: $y_{r} = \sum_{\beta} P_{r\beta} x_{\beta}$. In summary, we then have:
\begin{definition}
\label{def:congestion}
A \emph{congestion model} $\model\equiv\model(\net,\phi)$ in a graph $\graph(\nodes,\edges)$ is a network $\net(\play,\act)$ of $\graph$ equipped with a family of increasing latency functions $\phi_{r}, r\in\edges$.
\end{definition}

The similarities between this definition and that of a game in normal form should be evident: all that is needed to turn Definition \ref{def:congestion} into a $N$-person game is to specify its payoff functions. One way to go about this is to consider the \emph{user averages}:
\begin{equation}
\label{eq:avgdelay}
\omega_{i}(x)
= \frac{1}{\rho_{i}} \insum^{i}_{\alpha} x_{i\alpha} \omega_{i\alpha}(x)
= \frac{1}{\rho_{i}}\insum_{r}y_{ir} \phi_{r}(y_{r}),
\end{equation}
where the last equality follows from (\ref{eq:delay}) and the definition of $y_{ir}=\sum^{i}_{\alpha}x_{i\alpha}$. Thus, in keeping with the equilibrium condition (\ref{eq:NEQ}), a flow $q$ will be at Nash equilibrium in the game $\game_{1}\equiv\game_{1}(\play,\strat,-\omega)$ when:
\begin{equation}
\label{eq:avgNash}
\tag{NE1}
\omega_{i}(q) \leq \omega_{i}(q_{-i};q_{i}')\text{ for every user $i\in\play$ and all flows $q_{i}'\in\strat_{i}$}.
\end{equation}
For many classes of latency functions $\phi_{r}$, the average delays $\omega_{i}$ turn out to be convex and the existence of equilibria is assured by the results of \cite{Ro65}. However, not only is this not always the case but, more importantly, the user averages (\ref{eq:avgdelay}) do not necessarily reflect the users' actual optimisation objectives either.

Indeed, another equally justified choice of payoffs is given by the {\em worst delays}:
\begin{equation}
\label{eq:maxdelay}
\widetilde\omega_{i}(x) = \max_{\alpha: x_{i\alpha}>0}\left\{\omega_{i\alpha}(x)\right\},
\end{equation}
i.e. the time at which a user's last traffic packet reaches its destination. In that case, a flow $q$ will be at equilibrium for the game $\game_{2} \equiv \game_{2}(\play,\strat,-\widetilde\omega_{i})$ when:
\begin{equation}
\label{eq:maxNash}
\tag{NE2}
\widetilde\omega_{i}(q) \leq \widetilde\omega_{i}(q_{-i};q_{i}') \text{ for every user $i\in\play$ and all flows $q_{i}'\in\strat_{i}$}.
\end{equation}
Unfortunately, the payoff functions $\widetilde\omega_{i}$ may be discontinuous along any intersection of faces of $\strat_{i}$ because the support $\supp(x_{i}) = \{\alpha\in\act_{i}:x_{i\alpha}>0\}$ of $x_{i}$ changes there as well. Consequently, the existence of equilibrial flows cannot be inferred from the general theory in this instance either.

\smallskip

On the other hand, if we go back to our original motivation (the Internet), we see that our notion of a ``user'' more accurately portrays the network's {\em routers} and not its ``real-life'' users (humans, applications, etc.). However, since these routers are not selfish in themselves, conditions (\ref{eq:avgNash}) and (\ref{eq:maxNash}) do not necessarily point to the right direction either. Instead, the routers' selfless task is to ensure that the {\em nonatomic} traffic elements circulating in the network (the actual selfish entities) remain satisfied. It is thus more reasonable to go back to Wardrop's principle (\ref{eq:Wardrop}):
\begin{definition}
\label{def:Wardrop}
A flow $q\in\strat$ will be at \emph{Wardrop equilibrium} when
\begin{equation}
\label{eq:WEQ}
\tag{WEQ}
\omega_{i\alpha}(q) \leq \omega_{i\beta}(q) \text{ for all $i\in\play$ and for all routes $\alpha,\beta\in\act_{i}$ with $q_{i\alpha}>0$},
\end{equation}
i.e. when every nonatomic traffic element employs the fastest path available to it.
\end{definition}

Condition (\ref{eq:WEQ}) holds as an equality for all routes $\alpha,\beta\in\act_{i}$ that are employed in a Wardrop profile $q$. This gives $\omega_{i}(q) = \omega_{i\alpha}(q)$ for all $\alpha\in\supp(q_{i})$ and leads to the following alternative characterisation of Wardrop flows:
\begin{equation}
\label{eq:WEQ2}
\tag{\ref*{eq:WEQ}$'$}
\omega_{i}(q) \leq \omega_{i\beta}(q) \text{ for all $i\in\play$ and for all $\beta\in\act_{i}$}.
\end{equation}
Even more importantly however, Wardrop equilibria can also be harvested from the (global) minimum of the {\em Rosenthal potential} \citep{Ro73}:
\begin{equation}
\label{eq:Rosenthal}
\Phi(y) = \insum_{r}\Phi_{r}(y_{r}) = \insum_{r} \int_{0}^{y_{r}} \negspace\phi_{r}(w) \dd w.
\end{equation}
The reason for calling this function a potential is twofold: firstly, it is the nonatomic generalisation of the potential function introduced by \cite{MS96} to describe finite congestion games; secondly, the payoff functions $\omega_{i\alpha}$ can be obtained from $\Phi$ by a simple differentiation. To be sure, if we set $F(x) = \Phi(y)$ where $y=P(x)$, we readily obtain:
\begin{equation}
\frac{\pd F}{\pd x_{i\alpha}}
=\insum_{r}\frac{\pd\Phi}{\pd y_{r}} \frac{\pd y_{r}}{\pd x_{i\alpha}}
=\insum_{r} \phi_{r}(y_{r}) P^{i}_{r\alpha}
=\insum_{r\in\alpha} \phi_{r}(y_{r})
=\omega_{i\alpha}(x),
\end{equation}
which is exactly the definition of a potential function in the sense of (\ref{eq:potential}) \textendash~note also that the ``externality symmetry'' condition (\ref{eq:closed}) can be verified independently:
\begin{equation}
\frac{\pd \omega_{i\alpha}}{\pd x_{j\beta}}
=\insum_{r} P^{i}_{r\alpha} \phi_{r}'(y_{r}) P^{j}_{r\beta}
=\insum_{r\in\alpha\cap\beta} \phi_{r}'(y_{r})
=\frac{\pd \omega_{j\beta}}{\pd x_{i\alpha}}.
\end{equation}

To describe the exact relation between Wardrop flows and the minima of $\Phi$, consider the (convex) set $P(\strat)$ of all load profiles $y$ that result from admissible flows $x\in\strat$. Since the latency functions $\phi_{r}$ are in\-crea\-sing, $\Phi$ will be strictly convex over $P(\strat)$ and it will thus have a unique (global) minimum $y^{*}\in P(\strat)$. Amazingly enough, the Kuhn-Tucker conditions that characterise this minimum coincide with the Wardrop condition (\ref{eq:Wardrop}) \citep{BMW56,DS69,Sa01,RT02}, so the Wardrop set of the congestion model $\model$ will be given by:
\begin{equation}
\eq = \{x\in\strat: P(x) = y^{*}\} = P^{-1}(y^{*})\cap\strat.
\end{equation}

\begin{proposition}
Let $\model\equiv\model(\net,\phi_{r})$ be a congestion model with strictly increasing latencies $\phi_{r}$ and let $\eq$ be its set of Wardrop equilibria. Then:
\begin{enumerate}
\item any two Wardrop flows exhibit equal loads and delays.
\item $\eq$ is a nonempty convex polytope with $\dim(\eq)\leq\red(\net)$; moreover, if there exists an interior equilibrium $q\in\Int(\strat)$, then $\dim(\eq)=\red(\net)$.
\end{enumerate}
\end{proposition}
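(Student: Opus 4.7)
The starting point is the characterisation $\eq = P^{-1}(y^{*})\cap\strat$ established just before the statement, together with the fact that $y^{*}$ is the unique minimiser of the strictly convex potential $\Phi$ over the compact convex set $P(\strat)$. Part (1) is then almost immediate: any two Wardrop flows $q,q'\in\eq$ satisfy $P(q)=P(q')=y^{*}$, so they induce identical loads on every edge; and since $\omega_{i\alpha}(x) = \sum_{r\in\alpha}\phi_{r}(y_{r})$ depends on $x$ only through $y=P(x)$, the per-route delays coincide as well.

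For part (2), nonemptiness is the combination of two facts: $y^{*}$ exists (and lies in $P(\strat)$) because $\Phi$ is continuous and strictly convex on the compact convex polytope $P(\strat)$, and $\eq = P^{-1}(y^{*})\cap\strat$ is then nonempty by definition of $y^{*}$ as the image of some admissible flow. Convexity follows because $\strat$ is convex and $P^{-1}(y^{*})$ is an affine subspace of $V$; as the intersection of finitely many half-spaces defining $\strat$ with this affine subspace, $\eq$ is automatically a (bounded) polytope.

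For the dimension bound, fix any $q\in\eq$ and note that every other $x\in\eq$ satisfies $x-q\in\strat-q\subseteq Z$ and $P(x-q)=y^{*}-y^{*}=0$; hence $x-q\in Z\cap\ker P = \ker Q$, so that
\begin{equation}
\eq \subseteq (q+\ker Q)\cap\strat.
\end{equation}
This immediately gives $\dim(\eq)\leq \dim(\ker Q)=\red(\net)$. Suppose now that some $q\in\eq$ lies in the interior $\Int(\strat)$. Then there is $\eps>0$ such that $q+z\in\strat$ for every $z\in Z$ with $\|z\|<\eps$; in particular this holds for every $z\in\ker Q$ of norm less than $\eps$. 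For such $z$, $P(q+z)=P(q)+Q(z)=y^{*}$, so $q+z\in\eq$. Therefore $\eq$ contains a relative neighbourhood of $q$ inside the affine subspace $q+\ker Q$, which forces $\dim(\eq)\geq \dim(\ker Q)=\red(\net)$ and, combined with the previous bound, yields the desired equality.

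The only point that requires a bit of care is the passage from $\ker P$ to $\ker Q$: differences of admissible flows are constrained to lie in the tangent space $Z$ of $\strat$, which is precisely what turns the abstract kernel $\ker P$ into the redundancy-space $\ker Q$ and makes the dimension counting match the definition of $\red(\net)$. Everything else reduces to standard convex-polytope bookkeeping.
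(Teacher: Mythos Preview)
Your proof is correct and follows exactly the approach the paper has in mind: the paper itself dispenses with the argument in a single sentence (``Since $P^{-1}(y^{*})$ is an affine subspace of $\R^{\act}$ and $\strat$ is a product of simplices, there is really nothing left to prove\ldots''), and your write-up simply unpacks that remark, including the interior case and the $\ker P$ vs.\ $\ker Q$ distinction. There is no substantive difference in method.
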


Since $P^{-1}(y^{*})$ is an affine subspace of $\R^{\act}$ and $\strat$ is a product of simplices, there is really nothing left to prove (simply observe that if $q$ is an interior Wardrop flow, then $P^{-1}(y^{*})$ intersects the full-dimensional interior of $\strat$). The only surprise here is that this result seems to have been overlooked in much of the literature concerning congestion models: for instance, both \citet[Corollary 5.6]{Sa01} and \citet[Propositions 2 and 3]{FV04} presume that Wardrop equilibria are unique in networks with increasing latencies. However, if there are two distinct flows $x,x'$ leading to the same load profile $y$ (e.g. as in the simple network of Fig.~\ref{subfig:rednet}), then the potential function $F(x)\equiv \Phi(P(x))$ is no longer strictly convex: it is in fact constant along every null direction of the redundancy matrix $Q=P|_{T\strat}$.

\smallskip

We thus see that a Wardrop equilibrium is unique iff
\begin{inparaenum}[\itshape a\upshape)]
\item the network $\net$ is {\em irreducible}, or
\item $P^{-1}(y^{*})$ only intersects $\strat$ at a vertex.
\end{inparaenum}
This last condition suggests that the vertices of $\strat$ play a special role so, in analogy with Nash games, we define:
\begin{definition}
\label{def:strict}
A Wardrop equilibrium $q$ will be called \emph{strict} if
\begin{inparaenum}[\itshape a\upshape)]
\item $q$ is \emph{pure}: $q = \sum_{i}\rho_{i}e_{i,\alpha_{i}}$, $\alpha_{i}\in\act_{i}$; and
\item$\omega_{i\alpha_{i}}(q) < \omega_{i\beta}(q)$ for all paths $\beta\in\act_{i}\exclude\{\alpha_{i}\}$.
\end{inparaenum}
\end{definition}

In Nash games, a pure equilibrium occasionally fails to be strict, but only by a hair: an arbitrarily small perturbation of a player's pure payoffs $u_{i;\alpha_{1},\ldots\alpha_{N}}$ resolves a pure equilibrium into a strict one without affecting the payoffs of the other players. In congestion models however, there is no such guarantee because, whenever two users' paths overlap, one cannot perturb the delays of one user independently of the other's. As a matter of fact, the existence of a strict Wardrop equilibrium actually {\em precludes} the existence of any other equilibria:
\begin{proposition}
\label{prop:uniquestrict}
Let $\model$ be a congestion model. If $q$ is a strict Wardrop equilibrium of $\model$, then $q$ is the unique Wardrop equilibrium of $\model$.
\end{proposition}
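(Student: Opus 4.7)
The plan is to leverage the preceding proposition, which tells us that any two Wardrop equilibria induce the same edge loads $y^{*}$. Since the path delays $\omega_{i\alpha}(x)=\sum_{r\in\alpha}\phi_{r}(y_{r})$ depend on the flow $x$ only through the edge loads $y=P(x)$, this immediately gives the following key observation: for any Wardrop equilibrium $q'$ we have $\omega_{i\alpha}(q')=\omega_{i\alpha}(q)$ for every user $i\in\play$ and every path $\alpha\in\act_{i}$. This is the lever that converts the strictness hypothesis at $q$ into a constraint on $q'$.

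With this in hand, suppose towards a contradiction that $q'\in\eq$ is a Wardrop equilibrium distinct from $q$. Write $q=\sum_{i}\rho_{i} e_{i,\alpha_{i}}$ as in Definition~\ref{def:strict}. Since $q'\neq q$ and both are admissible flows (with $\sum_{\alpha}^{i} q'_{i\alpha} = \rho_{i} = \sum_{\alpha}^{i} q_{i\alpha}$), there must exist some user $i\in\play$ and some path $\beta\in\act_{i}\exclude\{\alpha_{i}\}$ with $q'_{i\beta}>0$. Applying the Wardrop characterisation (\ref{eq:WEQ}) to $q'$ on the support entry $\beta$ and comparing to the alternative path $\alpha_{i}\in\act_{i}$ yields $\omega_{i\beta}(q')\leq\omega_{i\alpha_{i}}(q')$.

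Now invoke the load-equality observation: $\omega_{i\beta}(q')=\omega_{i\beta}(q)$ and $\omega_{i\alpha_{i}}(q')=\omega_{i\alpha_{i}}(q)$. The previous inequality then reads $\omega_{i\beta}(q)\leq\omega_{i\alpha_{i}}(q)$, which directly contradicts the strict inequality $\omega_{i\alpha_{i}}(q)<\omega_{i\beta}(q)$ furnished by condition (b) of Definition~\ref{def:strict}. Hence no such $\beta$ exists, every user's flow $q'_{i}$ is supported on $\{\alpha_{i}\}$, and the mass constraint forces $q'_{i}=\rho_{i} e_{i,\alpha_{i}}=q_{i}$, i.e.\ $q'=q$.

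The argument is short because the heavy lifting has already been done in the preceding proposition (load and delay invariance across $\eq$). The only subtle point to keep in mind is that strictness of $q$ is a statement about delays evaluated \emph{at} $q$, so one genuinely needs the load-invariance of delays to transport that inequality to any other equilibrium; I do not anticipate a substantive obstacle beyond making this transport explicit.
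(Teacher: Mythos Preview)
Your argument is correct and, in fact, slightly more direct than the paper's. The paper also starts from the load/delay invariance furnished by the preceding proposition (using it to place $z=q'-q$ in $\ker Q$), but then passes through the convex combinations $q+\theta z$, observes that these are Wardrop for all $\theta\in[0,1]$, and invokes \emph{continuity} of the path delays as $\theta\to 0$ to obtain the contradictory equality $\omega_{i,0}(q)=\omega_{i\mu}(q)$. You bypass this limiting step entirely: once you know that delays coincide across all of $\eq$, you simply apply (\ref{eq:WEQ}) at $q'$ on a support path $\beta\neq\alpha_{i}$ and transport the resulting inequality back to $q$ in one line. Both routes lean on the same preceding proposition; yours just exploits its ``equal delays'' clause more fully, while the paper's version has the minor virtue of making the geometric picture (the affine set $q+\ker Q$ meeting $\strat$ only at the vertex $q$) more explicit.
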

\begin{proof}
Without loss of generality, let $q=\sum_{i}\rho_{i} e_{i,0}$ be a strict Wardrop equilibrium of $\model$ and suppose ad absurdum that $q'\neq q$ is another Wardrop flow. If we set $z=q'-q\in\ker Q$, it follows that the convex combinations $q+\theta z$ will also be Wardrop for all $\theta\in[0,1]$; moreover, for small enough $\theta>0$, $q+\theta z$ employs at least one path $\mu\in\act_{i}\exclude\{0\}$ that is not present in $q$ (recall that $q$ is pure). As a result, we get $\omega_{i\mu}(q+\theta z) = \omega_{i,0}(q+\theta z)$ for all sufficiently small $\theta>0$, and because the latency functions $\omega_{i\alpha}$ are continuous, this yields $\omega_{i,0}(q)=\omega_{i\mu}(q)$. However, since $q$ is a \emph{strict} Wardrop equilibrium which does not employ $\mu$, we must also have $\omega_{i,0}(q) < \omega_{i\mu}(q)$, a contradiction.
\end{proof}

In other words, even if $q$ is a strict equilibrium of a reducible network, then the redundant directions which constitute the affine subspace $q+\ker Q$ will only intersect $\strat$ at $q$. On the other hand, if $q$ is merely a pure equilibrium, $q+\ker Q$ might well intersect the open interior of $\strat$; in that case, there is no arbitrarily small perturbation of the delay functions that could make $q$ into a strict equilibrium.

\paragraph{Equilibria and Objectives}

On account of the above, we will focus our investigations on the concept of Wardrop equilibrium. However, we should mention here that this equilibrial notion can also be reconciled (to some extent at least) with the optimisation objectives represented by the payoffs (\ref{eq:avgdelay}) and (\ref{eq:maxdelay}) as well.

First, with respect to the average delays $\omega_{i}(x) = \rho_{i}^{-1} \sum^{i}_{\alpha}x_{i\alpha}\omega_{i\alpha}(x)$, the {\em optimal} traffic distributions which minimise the aggregate delay $\omega(x) = \sum_{i} \rho_{i} \omega_{i}(x)$ coincide with the Wardrop equilibria of a suitably modified game. This was first noted by \cite{BMW56}, who observed the inherent duality in Wardrop's principle: just as Wardrop equilibria occur at the minimum of the Rosenthal potential, so can one obtain the minimum of the aggregate latency $\omega$ by looking at the Wardrop equilibria of an associated congestion model. More precisely, the only change that needs to be made is to consider the ``marginal'' latency functions $\phi_{r}^{*}(y_{r}) = \phi_{r}(y_{r}) + y_{r}\phi_{r}'(y_{r})$ \cite[see also][]{RT04}. Then, to study these ``socially optimal'' flows, we simply have to redress our analysis to fit these ``marginal latencies'' instead (see Section \ref{sec:discussion} for more details).

\smallskip

Secondly, Wardrop equilibria also have close ties with the Nash condition (\ref{eq:maxNash}) which corresponds to the ``worst-delays'' (\ref{eq:maxdelay}). Specifically, one can easily see that the Nash condition (\ref{eq:maxNash}) is equivalent to the Wardrop condition (\ref{eq:Wardrop}) when every user only has 2 possible paths to choose from (every amount of traffic diverted from one path increases the delay at the user's other path). However, if a user has 3 or more paths at his disposal, then the situation can change dramatically because of {\em Braess's paradox} \citep{Br68}.

The essence of this paradox is that there exist networks which perform better if one removes their fastest link. An example of such a network is given in Fig. \ref{fig:Braess}, where it is assumed that a user seeks to route 6 units of traffic from $A$ to $D$ using the three paths $A\to B\to D$ (blue), $A\to C\to D$ (red) and $A\to B\to C\to D$ (green). In that case, the Wardrop condition (\ref{eq:WEQ}) calls for equidistribution: 2 units are routed via each path, leading to a delay of 92 time units along all paths. Alternatively, if the user sends 3 traffic units via the red and blue paths and ignores the green one, all traffic will experience a delay of 83. Paradoxically, even though the green path has a latency of only 70, the Nash conditon (\ref{eq:maxNash}) is satisfied: if traffic is diverted from, say, the red path to the faster green one, then the latency of the blue path will also increase, thus increasing the worst delay $\widetilde\omega_{i}$ as well.

This paradox is what led to the original investigations in the efficiency of selfish routing \citep{KP99, RT02}, and it seems that it is also what causes this disparity between Wardrop and Nash equilibria. A thorough investigation of this matter is a worthy project but, since it would take us too far afield, we will not pursue it here. Henceforward, we will focus almost exclusively on Wardrop flows, which represent the most relevant equilibrium concept for our purposes.

%\begin{figure}
%\begin{tikzpicture}
%[nodestyle/.style={rectangle,draw=black,fill=gray!5, inner sep=3pt},
%edgestyle/.style={->},
%>=stealth]

%\coordinate (A) at (-2.598,0);
%\coordinate (B) at (0,-1.5);
%\coordinate (C) at (0,1.5);
%\coordinate (D) at (2.598,0);

%\node[nodestyle] (A) at (A) {A};
%\node[rectangle] (Aup) at ($(A)+(0,0.1)$) {\phantom{A}};
%\node[nodestyle] (B) at (B) {B};
%\node[rectangle, inner sep=2pt] (Bup) at ($(B)+(0,0.1)$) {\phantom{B}};
%\node[nodestyle] (C) at (C) {C};
%\node[rectangle, inner sep=2pt] (Cdown) at ($(C)-(0,0.1)$) {\phantom{C}};
%\node[nodestyle] (D) at (D) {D};
%\node[rectangle] (Ddown) at ($(D)-(0,0.1)$) {\phantom{D}};

%\draw[edgestyle,blue] (A) to node[near end, left, black] {$10x\quad$} (B);
%\draw[edgestyle,red] (A) to node[near end, left, black] {$50+x\quad$} (C);
%\draw[edgestyle,blue] (B) to node[near start, right, black] {$\quad 50+x$} (D);
%\draw[edgestyle,red] (C) to node[near start, right, black] {$\quad 10x$} (D);
%\draw[edgestyle,olivegreen] (Aup) to (Bup);
%\draw[edgestyle,olivegreen] (B) to node[right, black] {$10+x$} (C);
%\draw[->,olivegreen] (Cdown) to (Ddown);

%\end{tikzpicture}
%\caption{Braess's paradox and the disparity between Wardrop and Nash equilibria.}
%\label{fig:Braess}
%\end{figure}

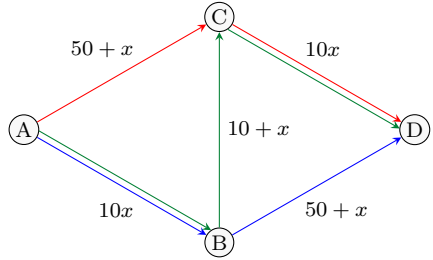
\begin{figure}
\begin{tikzpicture}
[nodestyle/.style={circle,draw=black,fill=gray!5, inner sep=1pt},
edgestyle/.style={->},
>=stealth]

\coordinate (A) at (-2.598,0);
\coordinate (B) at (0,-1.5);
\coordinate (C) at (0,1.5);
\coordinate (D) at (2.598,0);

\node[nodestyle] (A) at (A) {A};
\node[circle, inner sep=4.5pt] (Aup) at ($(A)+(0,0.1)$) {};
\node[nodestyle] (B) at (B) {B};
\node[circle, inner sep=2.4pt] (Bup) at ($(B)+(0,0.1)$) {};
\node[nodestyle] (C) at (C) {C};
\node[circle, inner sep=2.5pt] (Cdown) at ($(C)-(0,0.1)$) {};
\node[nodestyle] (D) at (D) {D};
\node[rectangle, inner sep=5.5pt] (Ddown) at ($(D)-(0,0.1)$) {};

\draw[edgestyle,blue] (A) to node[near end, left, black] {$10x\quad$} (B);
\draw[edgestyle,red] (A) to node[near end, left, black] {$50+x\quad$} (C);
\draw[edgestyle,blue] (B) to node[near start, right, black] {$\quad 50+x$} (D);
\draw[edgestyle,red] (C) to node[near start, right, black] {$\quad 10x$} (D);
\draw[edgestyle,olivegreen] (Aup) to (Bup);
\draw[edgestyle,olivegreen] (B) to node[right, black] {$10+x$} (C);
\draw[->,olivegreen] (Cdown) to (Ddown);

\end{tikzpicture}
\caption{Braess's paradox and the disparity between Wardrop and Nash equilibria.}
\label{fig:Braess}
\end{figure}

%%% DETERMINISTIC EVOLUTION
%----------------------------------------------------------------------

\section{Learning, Evolution and Rational Behaviour}
\label{sec:deterministic}

Unfortunately, locating the Wardrop equilibria of a network is a rather arduous process which entails a good deal of global calculations (namely the minimisation of a nonlinear convex functional with exponentially many variables over a convex polytope). Since such calculations clearly exceed the deductive capabilities of individual users (especially if they do not have access to global information), it is of great interest to see whether there are simple learning schemes which allow users to reach an equilibrium without having to rely on centralised computations.

\subsection{Learning and the Replicator Dynamics}
\label{subsec:replicator}

For our purposes, a learning scheme will be a rule which trains users to route their traffic in an efficient way by processing information that is readily available. On the other hand, since this information must be ``local'' in nature, the learning scheme should be similarly ``distributed'': for example, the play of one's opponents or the exact form of the network's latency functions are not easily accessible pieces of information. Furthermore, we should also be looking for a learning scheme which is simple enough for users to apply in real-time, without having to perform a huge number of calculations at each instant.

In continuous time, such a learning scheme may be cast as a dynamical system:
\begin{equation}
\frac{dx}{dt} = v(x) \text{ or, in coordinates: } \frac{d x_{i\alpha}}{dt} = v_{i\alpha}(x),
\end{equation}
where $x(t)\in\strat$ denotes the flow at time $t$ and the vector field $v:\strat\to \R^{\act}$ plays the part of the ``learning rule'' in question \textendash~for simplicity, we will also take $v$ to be smooth. Of course, since the flow $x(t)$ evolves in $\strat$, $v$ itself must lie on the tangent space $Z$ of $\strat$; we thus require that $\sum^{i}_{\alpha} v_{i\alpha}(x) = 0$ for all $i\in\play$.

Furthermore, $v$ should also leave the \emph{faces} of $\strat$ invariant in the sense that any individual trajectory $x_{i}(t)$ that begins at some face of $\strat_{i}$ must always remain in said face. This is actually an essential consequence of our postulates: if a user does not employ a particular route $\alpha\in\act_{i}$, then he has \emph{no} information on the route and, as such, there is no a priori reason that an adaptive learning rule would induce the user to sample it. In effect, such a learning rule would either fail to rely solely on readily observable information or would not necessarily be a very simple one.

This shows that $v_{i\alpha}(x)$ must vanish if $x_{i\alpha}=0$, so if we set $v_{i\alpha}(x) = x_{i\alpha} \tilde{v}_{i\alpha}(x)$, we obtain the orthogonality condition $\insum^{i}_{\alpha} x_{i\alpha} \tilde{v}_{i\alpha}(x)= 0$. Accordingly, $\tilde{v}_{i\alpha}$ may be written in the form:
\begin{equation}
\tilde{v}_{i\alpha}(x) = u_{i\alpha}(x) - u_{i}(x)
\end{equation}
where the $u_{i\alpha}$ satisfy no further constraints and, as can be shown by a simple summation, the function $u_{i}(x)$ is just the \emph{user average}: $u_{i}(x) = \rho_{i}^{-1}\sum^{i}_{\beta} x_{i\beta} u_{i\beta}(x)$ (recall that $\sum^{i}_{\beta} x_{i\beta} = \rho_{i}$). This shows that {\em any} learning rule which leaves the faces of $\strat$ invariant must necessarily be of the form:
\begin{equation}
\label{eq:CRD}
\frac{d x_{i\alpha}}{dt} = x_{i\alpha} \left(u_{i\alpha}(x) - u_{i}(x)\right).
\end{equation}

Dynamics of this type were first derived in the context of population biology by \cite{TJ78}, initially for different genotypes within a species (single-population models), and then for different species altogether (multi-po\-pu\-lation models; \cite{We95} provides an excellent survey). In these evolutionary games, the key objects of interest are large populations of different species, each of them subdivided into distinct {\em genotypes} that are ``programmed'' to a specific behaviour (e.g. ``hawks'' {\sf fight}, while ``doves'' take {\sf flight}). Then, at each instance of biological interaction, it is assumed that one representative from each species is selected at random, and they are all matched to play some Nash game $\game$ whose payoffs represent a proportionate increase in their reproductive fitness (measured by the number of offsprings in the unit of time). In this fashion, if $u_{i\alpha}(x)$ denotes the population average of the payoff to the $\alpha$-th genotype, it turns out that the evolution of the species will be governed by the {\em replicator dynamics} (\ref{eq:CRD}).

\smallskip

In our case, the most natural choice for the payoffs $u_{i\alpha}$ of (\ref{eq:CRD}) is to use the delay functions $\omega_{i\alpha}(x)$ and set $u_{i\alpha}=-\omega_{i\alpha}$. In so doing, we obtain:
\begin{equation}
\label{eq:RD}
\frac{d x_{i\alpha}}{dt}
= x_{i\alpha} \left(\omega_{i}(x) - \omega_{i\alpha}(x)\right).
\end{equation}
In keeping with our ``local information'' mantra, we see that users do not need to know the delays along paths that they do not employ because the replicator vector field vanishes when $x_{i\alpha}=0$. Thus, users that evolve according to (\ref{eq:RD}) are oblivious to their surroundings, even to the existence of other users: they simply use (\ref{eq:RD}) to respond to the stimuli $\omega_{i\alpha}(x)$ in the hope of minimising their delays.

Alternatively, if players learn at different rates $\lambda_{i}>0$ as a result of varied stimulus-response characteristics, we obtain the \emph{rate-adjusted} dynamics:
\begin{equation}
\label{eq:LRD}
\frac{d x_{i\alpha}}{dt}
= \lambda_{i}x_{i\alpha} \left(\omega_{i}(x) - \omega_{i\alpha}(x)\right)
\end{equation}
(naturally, the uniform case (\ref{eq:RD}) is recovered when all players learn at the ``standard'' rate $\lambda_{i}=1$).  Interestingly enough, these learning rates can also be viewed as (player-specific) inverse temperatures: in high temperatures (small $\lambda_{i}$), the differences between routes are toned down and players evolve along the slow time-scales $\lambda_{i}t$; at the other end of the spectrum, if $\lambda_{i}\to\infty$, equation (\ref{eq:LRD}) ``freezes'' to a rigid (and myopic) best-reply process \cite[see also][]{BS97}.

\subsection{Entropy and Rationality}
\label{subsec:entropy}

An immediate observation concerning the replicator dynamics (\ref{eq:LRD}) is that Wardrop equilibria are rest points: if $q$ is a Wardrop flow, the characterisation (\ref{eq:WEQ2}) gives $\omega_{i\alpha}(q) = \omega_{i}(q)$ whenever $x_{i\alpha}>0$. However, the same holds for {\em all} flows $q'$ which exhibit equal latencies along the paths in their support, and these flows are not necessarily Wardrop (in the terminology of \cite{Sa01}, this means that the replicator dynamics are ``complacent''). Consequently, the issue at hand is whether or not the replicator dynamics manage to single out Wardrop equilibria among other stationary states.

In that direction, if $y^{*}$ is the minimum of the Rosenthal potential $\Phi(y)$, it is easy to see that the function $F_{0}(x) = \Phi(P(x)) - \Phi(y^{*})$ is a {\em semi-definite Lyapunov function} for the dynamics (\ref{eq:LRD}). Indeed, $F_{0}$ vanishes on the Wardrop set $\eq$, is positive otherwise, and its evolution under (\ref{eq:LRD}) satisfies:
\begin{equation}
\label{eq:PC}
\frac{d F_{0}}{dt}
= \insum_{i,\alpha} \frac{\pd F_{i\alpha}}{\pd x_{i\alpha}} \frac{d x_{i\alpha}}{dt}
=\insum_{i} \lambda_{i}\rho_{i} \left[\omega_{i}^{2}(x)
- \rho_{i}^{-1}\left(\insum^{i}_{\alpha} x_{i\alpha} \omega_{i\alpha}^{2}(x)\right)\right] \leq 0,
\end{equation}
the last step following from Jensen's inequality \textendash~equality only holds when $\omega_{i\alpha}(x)=\omega_{i}(x)$ for all $\alpha\in\supp(x)$. Thus, by standard results in the theory of dynamical systems, it follows that the solution orbits of (\ref{eq:LRD}) descend the potential $F_{0}$ and eventually converge to a connected subset of rest points \textendash~see also \cite{Sa01}, where the property (\ref{eq:PC}) is referred to as ``positive correlation''.

\smallskip

Nevertheless, since not all stationary points of (\ref{eq:LRD}) are Wardrop equilibria, this result tells us little about the rationality properties of the replicator dynamics in congestion models. A much more important role is played by the {\em relative entropy} (also known as the {\em Kullback-Leibler divergence}):
\begin{equation}
\label{eq:entropy}
H_{q}(x) \equiv \dkl (q,x) = \negspace\sum_{\alpha\in\supp(q)} q_{i\alpha} \log \frac{q_{i\alpha}}{x_{i\alpha}}
\end{equation}
where the sum is taken over the support of $q$: $\supp(q)=\{\alpha: q_{i\alpha}>0\}$. Of course, this sum is finite only when $x$ employs with positive probability all $\alpha\in\act$ that are present in $q$; that is, the domain of definition of $H_{q}$ is $\strat_{q}\equiv\{x\in\strat: q\ll x\}$ (here, ``$\ll$'' denotes absolute continuity of measures). Even so though, it will matter little if we extend $H_{q}$ continuously to all of $\strat$ by setting $H_{q}=\infty$ outside $\strat_{q}$, so we will occasionally act as if $H_{q}$ were defined over all of $\strat$.

\smallskip

Technicalities aside, the significance of the relative entropy lies in that it measures distance in probability space. Indeed, even though it is not a distance function per se (it fails to be symmetric and does not satisfy the triangle inequality), it is positive definite (see below) and strictly convex \citep{We95}. More importantly for our purposes, it is also a (semi-definite) Lyapunov function for the dynamics (\ref{eq:RD}):

\begin{lemma}
\label{lem:entropy}
Let $\model(\net,\{\phi_{r}\})$ be a congestion model with increasing latencies, and let $q\in\strat$ be a Wardrop flow of $\model$. Then, the relative entropy $H_{q}(x)$ satisfies:
\begin{enumerate}
\item $H_{q}(q)=0$ and $H_{q}(x)>0$ for all $x\neq q$;
\item $\dot H_{q}$ vanishes on the Wardrop set $\eq(\model)$ and is negative otherwise (where $\dot H_{q}$ denotes the time derivative with respect to (\ref{eq:RD}): $\dot H_{q} = \sum_{i,\alpha} \frac{\pd H_{q}}{\pd x_{i\alpha}} \, \dot x_{i\alpha}$).
\end{enumerate}
In particular, if the network $\net$ is irreducible ($\red(\net)=0$), then $H_{q}$ is Lyapunov for the replicator dynamics (\ref{eq:RD}).
\end{lemma}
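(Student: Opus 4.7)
The plan is to establish the two bulleted claims separately and then combine them with the irreducibility hypothesis to obtain the Lyapunov conclusion. For the positive-definiteness claim, I would appeal to Gibbs' inequality (equivalently Jensen applied to $-\log$) user-by-user: since $q_{i}$ and $x_{i}$ are both Borel measures of total mass $\rho_{i}$ on $\act_{i}$, we have $\sum_{\alpha}^{i} q_{i\alpha}\log(q_{i\alpha}/x_{i\alpha}) \geq 0$ with equality iff $q_{i}=x_{i}$; summing over $i\in\play$ then yields $H_{q}(x)\geq 0$ throughout $\strat_{q}$, with equality iff $q=x$, which is the first bullet.

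For the derivative claim I would compute $\dot H_{q}$ along (\ref{eq:RD}) and recast it as a directional derivative of the Rosenthal potential. Since $q$ is constant and $x_{i\alpha}>0$ on $\supp(q_{i})$ (as $\strat_{q}$ is positively invariant, the dynamics leaving the faces of $\strat$ invariant), a direct differentiation using $\dot x_{i\alpha}/x_{i\alpha} = \omega_{i}(x)-\omega_{i\alpha}(x)$ gives
\begin{equation*}
\dot H_{q}(x) = -\sum_{i}\sum_{\alpha\in\supp(q_{i})} q_{i\alpha}\frac{\dot x_{i\alpha}}{x_{i\alpha}} = \sum_{i,\alpha} q_{i\alpha}\bigl(\omega_{i\alpha}(x)-\omega_{i}(x)\bigr) = \sum_{i,\alpha}(q_{i\alpha}-x_{i\alpha})\,\omega_{i\alpha}(x),
\end{equation*}
after combining $\sum_{\alpha}^{i}q_{i\alpha}=\rho_{i}$ with $\rho_{i}\omega_{i}(x)=\sum_{\alpha}^{i}x_{i\alpha}\omega_{i\alpha}(x)$. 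Since $\omega_{i\alpha}(x)=\partial F/\partial x_{i\alpha}$ for $F(x)\equiv\Phi(P(x))$, this is precisely $\langle q-x,\nabla F(x)\rangle$; and because $F$ is convex (as the pullback of the convex $\Phi$ by the linear map $P$), the tangent inequality $F(q)\geq F(x)+\langle\nabla F(x),q-x\rangle$ rearranges into $\dot H_{q}(x)\leq F(q)-F(x)$, which is $\leq 0$ because $q$ minimises $F$ globally on $\strat$.

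For the equality analysis I would argue in both directions. If $x\in\eq$, condition (\ref{eq:WEQ2}) gives $\omega_{i\alpha}(x)\geq\omega_{i}(x)$ for every $i,\alpha$, so each summand of the middle expression above is nonnegative and $\dot H_{q}(x)\geq 0$; combined with the upper bound just established, $\dot H_{q}(x)=0$. Conversely, if $\dot H_{q}(x)=0$, the convexity inequality forces $F(q)\geq F(x)$, while the minimality of $y^{\ast}$ simultaneously forces $F(x)\geq F(q)$, so $\Phi(P(x))=\Phi(y^{\ast})$; the \emph{strict} convexity of $\Phi$ on $P(\strat)$ (coming from $\phi_{r}'>0$) then yields $P(x)=y^{\ast}$, i.e.\ $x\in\eq$. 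This is the main delicate point of the argument: one must use the strict convexity of $\Phi$ in the load variable $y$ rather than of $F$ in $x$, because in reducible networks $F$ is flat along $\ker Q$ and so only convex, not strictly so.

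Finally, when $\red(\net)=0$, the proposition preceding Definition \ref{def:strict} gives $\dim\eq\leq\red(\net)=0$, so the nonempty convex polytope $\eq$ reduces to a singleton $\{q\}$. The second bullet therefore specialises to $\dot H_{q}(x)<0$ for every $x\in\strat_{q}\setminus\{q\}$, which, combined with the first bullet, makes $H_{q}$ a strict Lyapunov function for (\ref{eq:RD}) on $\strat_{q}$, as claimed.
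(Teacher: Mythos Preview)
Your proof is correct and follows essentially the same route as the paper: both compute $\dot H_{q}(x)=-L_{q}(x)$ with $L_{q}(x)=\sum_{i,\alpha}(x_{i\alpha}-q_{i\alpha})\omega_{i\alpha}(x)$, bound it via the tangent inequality for the convex Rosenthal potential, and use strict convexity of $\Phi$ in the load variable $y$ to pin down the equality case. The only cosmetic difference is that the paper pushes everything forward to edge loads, writes $L_{q}(x)=\Lambda(y)=\sum_{r}(y_{r}-y_{r}^{*})\phi_{r}(y_{r})$, and records the bound $\Lambda(y)\geq\Phi(y)-\Phi(y^{*})$ as a named inequality (the ``adjoint potential'' estimate, relegated to an appendix) because this object is reused in the later stochastic analysis; your version stays in flow coordinates and invokes convexity of $F=\Phi\circ P$ directly, which is equivalent and arguably cleaner for the lemma in isolation.
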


\begin{proof}
The first part of the lemma (positive-definiteness) is an easy consequence of Jensen's inequality \citep[pp. 95--100]{We95}. As for the second part:
\begin{equation}
\label{eq:Hdot}
\dot H_{q}(x)
= \insum_{i,\alpha} \frac{\pd H_{q}}{\pd x_{i\alpha}} \,\dot x_{i\alpha}
= -\insum_{i,\alpha} q_{i\alpha} \left(\omega_{i}(x) - \omega_{i\alpha}(x)\right)
\equiv -L_{q}(x),
\end{equation}
where we have set $L_{q}(x) \equiv \sum^{i}_{\alpha}q_{i\alpha}(\omega_{i}(x) - \omega_{i\alpha}(x))$. Then, the  simple rearrangement $\sum^{i}_{\alpha} q_{i\alpha} \omega_{i}(x) = \rho_{i} \omega_{i}(x) = \sum^{i}_{\alpha} x_{i\alpha} \omega_{i\alpha}(x)$ and some trivial linear algebra yield:
\begin{equation}
\label{eq:adjoint}
L_{q}(x)
\equiv \insum_{i,\alpha}(x_{i\alpha} - q_{i\alpha}) \omega_{i\alpha}(x)
= \insum_{r} (y_{r}-y^{*}_{r}) \phi_{r}(y_{r})
\equiv \Lambda (y),
\end{equation}
where $y=P(x)$ and $y^{*}=P(q)$ are the load profiles which correspond to the flows $x$ and $q$ respectively.

We will refer to the expression $L_{q}$ (or, interchangeably, to $\Lambda$) as the {\em adjoint potential} of $\model$ because, similarly to the Rosenthal potential $\Phi$, it measures distance from the Wardrop set $\eq$. The properties of $L_{q}$ will be discussed at length in Appendix \ref{apx:adjoint} where, among others, we establisth the easy (but crucial!) inequality:
\begin{equation}
\label{eq:potbound}
\Lambda(y) \geq \Phi(y) - \Phi(y^{*}).
\end{equation}
Hence, with $y^{*}$ being the global minimum of $\Phi$, we conclude that $\Lambda$ is positive definite and the lemma follows by noting that $P(x)=y^{*}$ iff $x$ is Wardrop.
\end{proof}

\begin{remark}[\emph{The Rate-adjusted Case}]
It is also reasonable to ask whether the relative entropy function enjoys the same properties in the rate-adjusted dynamics (\ref{eq:LRD}). Unfortunately, this is not true unless all players learn at the same rate; however, if we consider the \emph{rate-adjusted} relative entropy:
\begin{equation}
\label{eq:adjentropy}
H_{q}(x;\lambda)
= \sum_{i\in\play} \lambda_{i}^{-1} \negspace\sum_{\alpha:q_{i\alpha}>0} q_{i\alpha} \log\frac{q_{i\alpha}}{x_{i\alpha}},
\end{equation}
the same calculations show that $\dot H_{q}(x;\lambda) =-L_{q}(x)$ and provide us with the analogue of Lemma \ref{lem:entropy} for the rate-adjusted dynamics (\ref{eq:LRD}).
\end{remark}

\setcounter{remark}{0}

In view of the above, it would be tempting to infer that the replicator dynamics converge to Wardrop equilibrium. Nevertheless, a semi-definite Lyapunov function is not enough to guarantee convergence by itself, even if we rule out the existence of limit cycles. For instance, if we consider the homogeneous system:
\begin{equation}
\label{eq:counterexample}
\dot x = yz,\quad \dot y = -xz,\quad \dot z=-z^{2},
\end{equation}
with $z\geq0$, we see that it admits the semi-definite Lyapunov function $H(x,y,z) = x^{2}+y^{2}+z^{2}$ whose time derivative only vanishes on the $x$-$y$ plane. However, the general solution of (\ref{eq:counterexample}) in cylindrical coordinates $(\rho,\phi,z)$ is just:
\begin{equation}
\label{eq:helix}
\rho(t) = \rho_{0},
\quad \phi(t) = \phi_{0} - \log(1+z_{0}t),
\quad z(t)=\frac{z_{0}}{1+z_{0}t},
\end{equation}
and this represents a helix of constant radius whose coils become topologically dense as the solution orbits approach the $x$-$y$ plane. We thus see that the solutions of (\ref{eq:counterexample}) approach a set of stationary points, but do not converge to a specific one.

\smallskip

That said, there is much more at work in the replicator dynamics (\ref{eq:LRD}) than a single semi-definite Lyapunov function: there exists a whole \emph{family} of such functions, one for each Wardrop flow $q\in\strat$. So, undettered by potential pathologies, the replicator dynamics actually \emph{do} converge to equilibrium:

\begin{theorem}
\label{thm:detconvergence}
Let $\model(\net,\{\phi_{r}\})$ be a congestion model in a network $\net$. Then, every interior solution trajectory of the replicator dynamics (\ref{eq:LRD}) converges to a Wardrop equilibrium of $\model$; in particular, if the network $\net$ is irreducible, $x(t)$ converges to the unique Wardrop equilibrium of $\model$.
\end{theorem}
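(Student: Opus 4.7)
The plan is to combine the relative-entropy Lyapunov function of Lemma~\ref{lem:entropy} with a Barbalat-type integral argument, first showing that every accumulation point of $x(t)$ lies in the Wardrop polytope $\eq$, and then using the \emph{family} $\{H_{q}:q\in\eq\}$ of such Lyapunov functions to pin the limit down to a single point. Note that since $\dot{x}_{i\alpha}=\lambda_{i}x_{i\alpha}(\omega_{i}-\omega_{i\alpha})$ vanishes whenever $x_{i\alpha}=0$, the interior of $\strat$ is invariant under (\ref{eq:LRD}); hence $H_{q}(x(t);\lambda)$ remains finite along an interior trajectory $x(t)$ for every reference flow $q\in\strat$.

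First I would pick any $q\in\eq$ (existence from the convex-programming characterisation of Section~\ref{subsec:Wardrop}) and integrate the rate-adjusted variant of Lemma~\ref{lem:entropy}: since $\dot H_{q}(x;\lambda)=-\Lambda(P(x))$,
\begin{equation*}
\int_{0}^{\infty}\Lambda(P(x(s)))\dd s \;\leq\; H_{q}(x(0);\lambda)\;<\;\infty.
\end{equation*}
The integrand is $C^{1}$ with bounded time-derivative (the replicator field is smooth on the compact set $\strat$), so Barbalat's lemma forces $\Lambda(P(x(t)))\to 0$. By (\ref{eq:potbound}) and the strict convexity of $\Phi$ on $P(\strat)$, this is equivalent to $P(x(t))\to y^{*}$, so every accumulation point $p$ of $x(t)$ satisfies $P(p)=y^{*}$, i.e.\ $p\in P^{-1}(y^{*})\cap\strat=\eq$.

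Next, compactness yields at least one such accumulation point $q^{*}$; pick $t_{n}\uparrow\infty$ with $x(t_{n})\to q^{*}\in\eq$. Applying Lemma~\ref{lem:entropy} with $q=q^{*}$ makes $t\mapsto H_{q^{*}}(x(t);\lambda)$ non-increasing, hence convergent to some $h^{*}\geq 0$; continuity of $H_{q^{*}}$ at $q^{*}$ evaluated along $t_{n}$ gives $h^{*}=H_{q^{*}}(q^{*};\lambda)=0$. Lower semi-continuity of $H_{q^{*}}$ on $\strat$ then closes the argument: every other accumulation point $p$ of $x(t)$ satisfies $H_{q^{*}}(p;\lambda)\leq\liminf_{k}H_{q^{*}}(x(t_{k});\lambda)=0$, and the positive-definiteness part of Lemma~\ref{lem:entropy} yields $p=q^{*}$. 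Therefore $x(t)\to q^{*}\in\eq$; in the irreducible case $\eq$ is a singleton, so this specialises to convergence to the unique Wardrop flow.

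I expect the first step to be the main obstacle. As the counterexample (\ref{eq:counterexample}) warns, a single semi-definite Lyapunov function is not enough to exclude ``orbiting'' behaviour in which the trajectory approaches a whole set of rest points without selecting any one of them; a bare LaSalle argument based only on the Rosenthal potential $F_{0}$ can leave the limit set containing non-Wardrop critical points of $F_{0}$ on faces of $\strat$. What makes the entropy family sharper is the pointwise bound $\Lambda(P(x))\geq\Phi(P(x))-\Phi(y^{*})$ combined with the integrability of $\dot H_{q}$ along \emph{every} interior orbit. Together with the uniform continuity of $t\mapsto\Lambda(P(x(t)))$, these upgrade the semi-definite estimate into genuine convergence of the edge loads to $y^{*}$; the separate Lyapunov functions $H_{q}$, $q\in\eq$, then break the remaining ties in the reducible case, where $\eq$ is a full-dimensional polytope rather than a point.
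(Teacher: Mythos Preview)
Your proof is correct and follows the same two-step strategy as the paper: first show that every accumulation point of an interior trajectory lies in $\eq$, then apply the entropy $H_{q^{*}}$ relative to one such accumulation point $q^{*}$ to conclude $x(t)\to q^{*}$. The only variation is in the first step, where you invoke Barbalat's lemma on the integrable function $t\mapsto\Lambda(P(x(t)))$, whereas the paper argues directly by continuity of the evolution map: if $x(t_{n})\to x^{*}\notin\eq$, then $H_{q}(\theta(x^{*},t))<H_{q}(x^{*})=m$ for any $t>0$, contradicting $V_{q}(t_{n}+t)\to m$.
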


\begin{proof}
It will be useful to shift our point of view to the {\em evolution function} $\theta(x,t)$ of the dynamics (\ref{eq:LRD}) which describes the solution trajectory that starts at $x$ at time $t=0$ and which satisfies the consistency condition:
\begin{equation}
\theta(x,t+s) = \theta(\theta(x,t),s) \text{ for all $t,s\geq0$ and for all $x\in\strat$.}
\end{equation}
Now, fix the initial condition $x\in\Int(\strat)$ and let $x(t)=\theta(x,t)$ be the corresponding solution orbit. If $q\in\eq$ is a Wardrop equilibrium of $\model$, then, in view of Lemma \ref{lem:entropy}, the function $V_{q}(t) \equiv H_{q}(\theta(x,t))$ will be decreasing and will converge to some $m\geq0$ as $t\to\infty$. It  thus follows that $x(t)$ converges itself to the level set $H^{-1}_{q}(m)$.

Suppose now that there exists some increasing sequence of times $t_{n}\to\infty$ such that $x_{n}\equiv x(t_{n})$ does not converge to $\eq$. By compactness of $\strat$ (and by descending to a subsequence if necessary), we may assume that $x_{n}=\theta(x,t_{n})$ converges to some $x^{*}\notin \eq$ (but necessarily in $H^{-1}_{q}(m)$). Hence, for any $t>0$:
\begin{equation}
H_{q}(\theta(x,t_{n}+t)) = H_{q}(\theta(\theta(x,t_{n}),t)) \to H_{q}(\theta(x^{*},t)) < H_{q}(x^{*}) = m
\end{equation}
where the (strict) inequality stems from the fact that $\dot H_{q}<0$ outside $\eq$. On the other hand, $H_{q}(\theta(x,t_{n}+t))=V_{q}(t_{n}+t)\to m$, a contradiction.

Since the sequence $t_{n}$ was arbitrary, this shows $x(t)$ converges to the set $\eq$. So, let $q'$ be a limit point of $x(t)$ with $x(t'_{n})\to q'$ for some sequence of times $t_{n}'\to\infty$. Then, $V_{q'}(t_{n}')=H_{q'}(x(t_{n}'))$ will converge to zero and, with $V_{q'}$ decreasing, we will have $\lim_{t\to\infty} V_{q'}(t) =0$ as well. Seeing as $H_{q'}$ only vanishes at $q'$, we conclude that $x(t)\to q'$.
\end{proof}

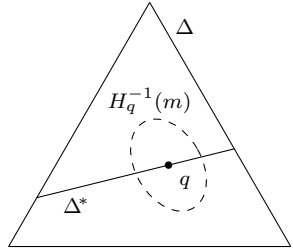
\begin{figure}
\begin{tikzpicture}[scale=1.25]
\coordinate (A) at (-1.5,0);
\coordinate (B) at (0,2.598);
\coordinate (C) at (1.5,0);
\coordinate (P1) at ($ 4/5*(A) + 1/5*(B)$);
\coordinate (P2) at ($ 2/5*(B) + 3/5*(C)$);
\coordinate (Q) at ($1/3*(P1) + 2/3*(P2)$);
\draw (A)--(B);
\draw (B)--(C);
\draw (C)--(A);
\draw (P1)--(P2);
\draw[dashed, rotate=30] (Q) ellipse (10pt and 15pt);
\node [fill=black, shape =circle, inner sep=1pt, label=-45: $q$] at (Q) {};
\node at ($ 0.9*(B) + 0.25*(C)$) {$\strat$};
\node [inner sep=-2pt, label=-90: $\eq$] at ($ 0.8*(P1) + 0.2*(P2)$) {};
\node [inner sep=0pt] at (0,1.55) {$H^{-1}_{q}(m)$};
\end{tikzpicture}
\caption{The various sets in the proof of Theorem \ref{thm:detconvergence}.}
\label{fig:convproof}
\end{figure}

\begin{remark}[{\em Previous Work}]
In the context of potential games, \cite{Sa01} examined a class of learning dynamics $v_{i\alpha}$ which are ``positively correlated'' to the game's payoff functions $u_{i\alpha}=-\omega_{i\alpha}$, in the sense that $\sum_{i,\alpha} v_{i\alpha}(x)\omega_{i\alpha}(x) \geq0$. It was then shown that if the rest points of these dynamics coincide with the game's Wardrop equilibria (the ``non-complacency'' condition), then all solution orbits converge to set of Wardrop equilibria. Unfortunately, as we have already pointed out, the replicator dynamics are ``complacent'' and, in that case, Sandholm's results only ensure that Wardrop equilibria are Lyapunov stable.

To the best of our knowledge, the stronger convergence properties of Theorem \ref{thm:detconvergence} were first suggested by \citet{FV04} who identified the link between Wardrop equilibrium and evolutionary stability \citep{MS74}. In particular, the authors showed that Wardrop equilibria are robust against ``mutations'' that lead to greater delays but, in networks with more than one users (the ``multi-commodity'' case as they call it), their approach rests heavily on the (implicit) assumption of irreducibility. If this is not the case, the adjoint potential $L_{q}$ is only positive \emph{semi-definite} and the approach of \citeauthor{FV04} breaks down because Wardrop equilibria are only \emph{neutrally} stable \textendash~this is also the problem with the formulation of Corrolary 5.1 in \cite{Sa01}.
\end{remark}

\begin{remark}[\emph{Non-interior Trajectories}]
One might also ask what happens if the initial condition $x(0)$ is not an interior point of $\strat$. Clearly, if $x(0)$ does not employ the routes that are present in a Wardrop flow $q$, $x(t)$ cannot have $q$ as a limit point -- a simple consequence of the fact that the replicator dynamics leave the faces of $\strat$ invariant. All the same, one can simply quotient out the routes that are not initially present until $x(0)$ \emph{becomes} an interior point in the reduced strategy space $\strat_{\text{eff}}$ that ensues. In that case, Theorem \ref{thm:detconvergence} can be applied to the (similarly reduced) congestion model $\model_{\text{eff}}$ to show that $x(t)$ converges to Wardrop equilibrium in $\model_{\text{eff}}$ (cf. the ``restricted equilibria'' of \citealp{FV04}).
\end{remark}

%\begin{remark}[\emph{Strict Equilibria}]
%In conjunction with Proposition \ref{prop:uniquestrict}, the previous remark guarantees that strict Wardrop equilibria attract \emph{all} trajectories which start at a finite Kullback-Leibler distance from them, regardless of whether the network $\net$ is irreducible or not. This is a deep difference from what happens in Nash games where strict equilibria attract only those trajectories that already start sufficiently close to them: learning in congestion models is more far-sighted.
%\end{remark}

\begin{remark}[{\em Evolution and Friction}]
Since the replicator trajectories converge to a Wardrop equilibrium, it follows that there can be no limit cycles. On the other hand, limit cycles are a common occurence in evolutionary games: for example, Mathcing Pennies and Rock-Paper-Scissors both exhibit limit cycles in the standard replicator dynamics \citep{We95}. So, while the evolutionary energy of large populations may remain undiminished over time, Theorem \ref{thm:detconvergence} shows that congestion models are {\em dissipative} and traffic flows settle down to a steady state.
%This gulf between evolution and learning can be bridged by noting that limit cycles are tied to equilibria that are not evolutionarily stable. Indeed, it is a relatively well-known fact that (interior) evolutionarily stable strategies are attracting in the (single-population) replicator dynamics \citep{We95}. Hence, given the analogy between evolutionary stability and Wardrop equilibrium that was observed by \cite{FV04}, it is not too surprising that the replicator dynamics in congestion models follow Theorem \ref{thm:detconvergence}.
\end{remark}

\setcounter{remark}{0}

%%% STOCHASTIC FLUCTUATIONS
%----------------------------------------------------------------------

\section{The Effect of Stochastic Fluctuations}
\label{sec:stochastic}

Going back to our original discussion on learning schemes, we see that the users' evolution hinges on the feedback that they receive about their choices, namely the delays $\omega_{i\alpha}(x)$ that they record. We have already noted that this information is based on actual observations, but this does not necessarily mean that it is also {\em accurate} as well. For instance, the interference of nature with the game or imperfect readings of one's payoffs might perturb this information considerably; additionally, if the users' traffic flows are not continuous in time but consist of discrete segments instead (e.g. datagrams in communication networks), the queueing latencies $\omega_{i\alpha}$ only represent the users' expected delays. Hence, the delays that users actually observe might only be a randomly fluctuating estimate of the underlying payoffs, and this could negatively affect the rationality properties of the replicator dynamics.

\subsection{Stochastic Replicator Dynamics}
\label{subsec:SRD}

Our goal here will be to determine the behaviour of the replicator dynamics under sto\-chas\-tic perturbations of the kind outlined above. To that end, write the delay that users experience along the edge $r\in\edges$ as $\hat\phi_{r} = \phi_{r}+\eta_{r}$ where $\eta_{r}$ denotes the perturbation process. Then, the latency $\hat\omega_{i\alpha}$ along $\alpha\in\act_{i}$ will just be $\hat \omega_{i\alpha} = \omega_{i\alpha} + \eta_{i\alpha}$, where, in obvious notation, $\eta_{i\alpha} = \sum_{\alpha} P^{i}_{r\alpha} \eta_{r}$. In this way, the replicator dynamics (\ref{eq:RD}) become:
\begin{equation}
\label{eq:perturb}
\frac{d x_{i\alpha}}{dt}
=x_{i\alpha}\left(\hat \omega_{i} - \hat \omega_{i\alpha}\right)
= x_{i\alpha}\left(\omega_{i} - \omega_{i\alpha}\right)
+ x_{i\alpha}(\eta_{i} - \eta_{i\alpha})
\end{equation}
where $\hat\omega_{i} = \rho_{i}^{-1}\sum^{i}_{\beta} x_{i\beta} \hat\omega_{i\beta}$ and $\eta_{i}=\rho_{i}^{-1}\sum^{i}_{\beta}x_{i\beta}\eta_{i\beta}$.

The exact form of the perturbations $\eta_{r}$ clearly depends on the particular situation at hand. Still, since we are chiefly interested in stochastic fluctuations around the underlying delays $\omega_{i\alpha}$, it is reasonable to take these perturbations to be some sort of white noise that does not bias users towards one direction or another. In that case, we should rewrite (\ref{eq:perturb}) as a \emph{stochastic} differential equation:
\begin{equation}
\label{eq:SRD}
dX_{i\alpha}
= X_{i\alpha} \left[\omega_{i}(X) - \omega_{i\alpha}(X)\right] dt
+ X_{i\alpha} \left[ dU_{i\alpha} - \rho_{i}^{-1}\insum^{i}_{\beta}X_{i\beta} \dd U_{i\beta}\right]
\end{equation}
where $dU_{i\alpha}$ describes the total noise along the path $\alpha\in\act_{i}$:
\begin{equation}
\label{eq:U}
dU_{i\alpha}
= \insum_{r\in\alpha} \sigma_{r} \dd W_{r}
= \insum_{r} P^{i}_{r\alpha}\sigma_{r} \dd W_{r}
\end{equation}
and $W(t) = \sum_{r} W_{r}(t) \eps_{r}$ is a Wiener process in $\R^{\edges}$, the space spanned by the edges $\edges$ of the network. Similarly, if players learn at different rates $\lambda_{i}$, we get:
\begin{flalign}
\label{eq:SLRD}
d X_{i\alpha}
&=\lambda_{i} X_{i\alpha} \left[\omega_{i}(X) - \omega_{i\alpha}(X)\right]
+\lambda_{i} X_{i\alpha} \left[ dU_{i\alpha} - \rho_{i}^{-1}\insum^{i}_{\beta} X_{i\beta}\dd U_{i\beta}\right]\\
&= \lambda_{i} b_{i\alpha}(X) dt
+ \lambda_{i}\insum^{i}_{\beta} c_{i,\alpha\beta}(X) \dd U_{i\beta}\notag
\end{flalign}
where $b$ and $c$ are the drift and diffusion coefficients that appear in (\ref{eq:SRD}).

The rate-adjusted equation (\ref{eq:SLRD}) will constitute our stochastic version of the replicator dynamics and, as such, it warrants some discussion in and by itself. A first remark to be made concerns the noise coefficients $\sigma_{r}$: even though we have written them in a form that suggests they are constant, they need not be so: after all, the intensity of the noise on an edge might well depend on the edge loads $Y_{r}=\sum_{\alpha} P_{r\alpha} X_{\alpha}$. On that account, we will only assume that these coefficients are essentially bounded functions of the loads $y$. Nonetheless, in an effort to reduce notational clutter, we will not indicate this dependence explicitly; instead, we simply remark here that our results continue to hold if we replace $\sigma_{r}$ with the worst-case scenario $\sigma_{r}\leftrightarrow\ess\sup_{y} \sigma_{r}(y)$.

Secondly, it is also important to compare (\ref{eq:SLRD}) to other stochastic incarnations of the replicator dynamics, namely the ``ag\-gre\-gate shocks'' version of \cite{FH92} and the authors' own ``exponential learning'' approach \citep{GameNets09,MM09}. In the case of the former, one perturbs the replicator equation (\ref{eq:CRD}) by accounting for the (stochastic) interference of nature with reproduction rates \citep{FH92, Im05}:
\begin{flalign}
\label{eq:ASRD}
dX_{i\alpha}
&= X_{i\alpha}
\left(u_{i\alpha}(X) - u_{i}(X)\right)\dd t
-\left(\sigma_{i\alpha}^{2} X_{i\alpha} - \insum^{i}_{\beta} \sigma_{i\beta}^{2} X_{i\beta}\right)\dd t\\
&+X_{i\alpha}\left[
\sigma_{i\alpha}\dd W_{i\alpha} - \insum^{i}_{\beta} \sigma_{i\beta} X_{i\beta}\dd W_{i\beta}
\right],\notag
\end{flalign}
where $W=\sum_{i,\alpha} W_{i\alpha} e_{i\alpha}$ is a Wiener process in $\prod_{i}\R^{\act_{i}}$. Then, if the ``aggregate shocks'' $\sigma_{i\alpha}$ are mild enough, \cite{Ca00} and \cite{Im05} showed that dominated strategies become extinct and that the game's strict Nash equilibria are asymptotically stable with arbitrarily high probability.

By comparison, in the ``exponential learning'' case it is assumed that the players of a Nash game employ a learning scheme akin to logistic fictitious play \cite[pp.~118--129]{FL98}. However, if the information that players have is imperfect, the errors propagate to their learning curves and instead lead to the stochastic dynamics:
\begin{flalign}
\label{eq:XRD}
dX_{i\alpha}
&= \lambda_{i}X_{i\alpha} \left[
\left(u_{i\alpha}(X) - u_{i}(X)\right)
\right] dt
+\lambda_{i}X_{i\alpha}\!\!\left[
\sigma_{i\alpha}dW_{i\alpha} - \insum^{i}_{\beta} \sigma_{i\beta} X_{i\beta} dW_{i\beta}
\right]\\
&+\frac{\lambda_{i}^{2}}{2}X_{i\alpha}\!\!\left[
\sigma_{i\alpha}^{2} (1-2X_{i\alpha}) - \insum^{i}_{\beta} \sigma_{i\beta}^{2} X_{i\beta}(1-2X_{i\beta})
\right]dt.\notag
\end{flalign}
The rationality properties of these learning dynamics are somewhat stronger than in the biological setting: irrespective of the perturbations' magnitude, strategies which are not rationally admissible die out at an exponential rate and the strict Nash equilibria of the game are always (stochastically) stable \citep{MM09,GameNets09}.

In light of the above, there are two notable traits of (\ref{eq:SLRD}) that set it apart from its other stochastic versions. First off, the drift of (\ref{eq:SLRD}) coincides with the deterministic replicator dynamics (\ref{eq:LRD}) whereas the drift coefficients of (\ref{eq:ASRD}) and (\ref{eq:XRD}) do not. On the other hand, the martingale processes $U$ that appear in (\ref{eq:SLRD}) are not uncorrelated components of some Wiener process (as is the case for both (\ref{eq:ASRD}) and (\ref{eq:XRD})): instead, depending on whether the paths $\alpha,\beta\in\act$ have edges in common or not, the processes $U_{\alpha}, U_{\beta}$ might be highly correlated or not at all.

To make this last observation more precise, recall that the Wiener differentials $\dd W_{r}$ are orthogonal: $dW_{r}\cdot dW_{s} = d[W_{r},W_{s}] = \delta_{rs}\dd t$. In its turn, this implies that the stochastic differentials $dU_{\alpha}, dU_{\beta}$ satisfy:
\begin{flalign}
\label{eq:dUprod}
dU_{\alpha} \ip \dd U_{\beta}
&=\bigg(\insum_{r}P_{r\alpha} \sigma_{r}\dd W_{r}\bigg) \cdot \bigg(\insum_{s}P_{s\beta}\sigma_{s} \dd W_{s}\bigg)\\
&=\insum_{r,s} P_{r\alpha} P_{s\beta} \sigma_{r}\sigma_{s} \delta_{rs} \dd t
=\insum_{r\in\alpha\beta} \sigma_{r}^{2} \dd t = \sigma_{\alpha\beta}^{2} \dd t,\notag
\end{flalign}
where $\sigma_{\alpha\beta}^{2} = \sum_{r} P_{r\alpha} P_{r\beta}\sigma_{r}^{2}$ gives the variance of the noise along the intersection $\alpha\beta\equiv\alpha\cap\beta$ of the paths $\alpha,\beta\in\act$ (note also that we used our notational conventions to avoid cumbersome expressions such as $\sigma_{i\alpha,j\beta}^{2}$). We thus see that the processes $U_{\alpha}$ and $U_{\beta}$ are uncorrelated iff the paths $\alpha,\beta\in\act$ have no common edges. At the other extreme, we have:
\begin{equation}
\left(dU_{\alpha}\right)^{2} = \insum_{r\in\alpha} \sigma_{r}^{2} \dd t = \sigma_{\alpha}^{2} \dd t
\end{equation}
where $\sigma_{\alpha}^{2}\equiv\sigma_{\alpha\alpha}^{2} = \sum_{r}P_{r\alpha}\sigma_{r}^{2}$ measures the intensity of the noise on the route $\alpha\in\act$.\footnote{This notation is also consistent with our intersection notation: $\alpha\alpha \equiv \alpha\cap\alpha = \alpha$.} These expressions will be key to our analysis and we will make liberal use of them in the rest of our paper.

\subsection{Stochastic Fluctuations and Rationality}
\label{subsec:stochrationality}

Our goal in this section will be to explore the rationality properties of the stochastic replicator dynamics (\ref{eq:SLRD}). To begin with, note that (\ref{eq:SLRD}) admits a (unique) strong solution for any initial state $X(0) = x\in\strat$, even though its coefficients do not necessarily grow linearly -- a common requisite for existence and uniqueness of strong solutions to SDE's. Indeed, an addition over $\alpha\in\act_{i}$ reveals that every component simplex $\strat_{i}$ of $\strat$ remains invariant under these dynamics: if $X_{i}(0) = x_{i}\in\strat_{i}$, then $d\left(\sum_{\alpha} X_{i\alpha}\right) = 0$ and, hence, $X_{i}(t)$ stays in $\strat_{i}$ for all $t\geq0$.
%\footnote{Actually, it is not harder to see that every face of $\strat$ is a trap for $X(t)$.}
So, if $U\supseteq\strat$ is open and $\phi$ is a smooth bump function on $U$ that vanishes outside some compact set $K\supseteq U$, the SDE
\begin{equation}
d X_{i\alpha}
= \lambda_{i}\phi(x)\left(
b_{i\alpha}(X)\dd t
+ \insum^{i}_{\beta} c_{i,\alpha\beta}(X)\dd U_{i\beta}
\right)
\end{equation}
has bounded coefficients and will thus admit a unique strong solution. But since this last equation agrees with (\ref{eq:SLRD}) on $\strat$ and all solutions of (\ref{eq:SLRD}) always stay in $\strat$, our claim follows.

Now, as in the deterministic setting, our main tool will be the (rate-adjusted) relative entropy $H_{q}(x;\lambda) = \sum_{i}\lambda_{i}^{-1}\sum^{i}_{\alpha} q_{i\alpha} \log \left(q_{i\alpha}/x_{i\alpha}\right)$ which we will study with the help of the \emph{generator} $\gen$ of the diffusion (\ref{eq:SLRD}). To that end, recall that the generator $\gen$ of the It\^o diffusion:
\begin{equation}
d X_{\alpha}(t) = \mu_{\alpha}(X(t)) \dd t + \insum_{\beta} \sigma_{\alpha\beta}(X(t)) \dd W_{\beta}(t),
\end{equation}
where $W$ is a Wiener process, is just the second order differential operator:
\begin{equation}
\label{eq:gen}
\gen
= \insum_{\alpha} \mu_{\alpha}(x) \frac{\pd}{\pd x_{\alpha}}
+\frac{1}{2}\insum_{\alpha,\beta} \left(\sigma(x)\sigma^{T}(x)\right)_{\alpha\beta} \frac{\pd^{2}}{\pd x_{\alpha} \pd x_{\beta}}
\end{equation}
(for a comprehensive account, consult the excellent book by \cite{Ok07}).
In this manner, if $f$ is a sufficiently smooth function, $\gen f$ captures the drift of the process $f(X(t))$:
\begin{equation}
\label{eq:gendrift}
d f(X(t))
= \gen f(X(t)) \dd t
+ \insum_{\alpha,\beta} \left.\frac{\pd f}{\pd x_{\alpha}}\right|_{X(t)}
\negspace\sigma_{\alpha\beta}(X(t)) \dd W_{\beta}(t).
\end{equation}

Of course, in the case of the diffusion (\ref{eq:SLRD}), the martingales $U$ are not the components of a Wiener process, so (\ref{eq:gendrift}) cannot be applied right off the shelf. However, a straightforward application of It\^o's lemma (see appendix \ref{apx:stochastic}) yields:
\begin{lemma}
\label{lem:stochentropy}
Let $\gen$ be the generator of (\ref{eq:SLRD}). Then, for any $q\in\strat$:
\begin{flalign}
\label{eq:stochentropy}
\gen H_{q}(x;\lambda)=
-L_{q} (x)
&+\frac{1}{2}\insum_{i}\frac{\lambda_{i}}{\rho_{i}}
\insum^{i}_{\beta,\gamma} \sigma_{\beta\gamma}^{2} (x_{i\beta} - q_{i\beta})(x_{i\gamma} - q_{i\gamma})\\
&+\frac{1}{2}\insum_{i}\frac{\lambda_{i}}{\rho_{i}}
\insum^{i}_{\beta,\gamma} \sigma_{\beta\gamma}^{2} q_{i\beta} (\rho_{i}\delta_{\beta\gamma} - q_{i\gamma})\notag,
\end{flalign}
where $L_{q}(x) = \sum_{i\alpha}(x_{i\alpha} - q_{i\alpha}) \omega_{i\alpha}(x)$ is the adjoint potential of (\ref{eq:adjoint}).
\end{lemma}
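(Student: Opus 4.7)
The plan is to apply It\^o's formula to $H_q(\,\cdot\,;\lambda)$ component-by-component and then collect the drift terms. Since $H_q$ splits as a sum of single-variable functions $-\lambda_i^{-1} q_{i\alpha}\log x_{i\alpha}$ (up to constants), the full Hessian of $H_q$ is diagonal: one has
\[
\frac{\pd H_q}{\pd x_{i\alpha}} = -\lambda_i^{-1}\frac{q_{i\alpha}}{x_{i\alpha}}, \qquad
\frac{\pd^{2} H_q}{\pd x_{i\alpha}\pd x_{j\beta}} = \delta_{ij}\,\delta_{\alpha\beta}\,\lambda_i^{-1}\frac{q_{i\alpha}}{x_{i\alpha}^{2}},
\]
so only the diagonal quadratic covariations $d[X_{i\alpha},X_{i\alpha}]$ will enter the generator.

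First I would handle the first-order drift. Feeding the drift $\lambda_i b_{i\alpha}(X) = \lambda_i X_{i\alpha}(\omega_i(X)-\omega_{i\alpha}(X))$ of (\ref{eq:SLRD}) into $\sum_{i,\alpha}\partial_{i\alpha}H_q$, the factors of $\lambda_i$ and $X_{i\alpha}$ cancel and one is left with $-\sum_{i,\alpha} q_{i\alpha}\bigl(\omega_i(x)-\omega_{i\alpha}(x)\bigr) = -L_q(x)$, exactly as in the deterministic computation (\ref{eq:Hdot}). This gives the $-L_q(x)$ piece in (\ref{eq:stochentropy}).

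Next I would deal with the It\^o correction. Writing the diffusion coefficient of $dX_{i\alpha}$ as $\lambda_i\sum_\beta^{i} c_{i,\alpha\beta}(X)\,dU_{i\beta}$ with $c_{i,\alpha\beta}(x)=x_{i\alpha}(\delta_{\alpha\beta}-\rho_i^{-1}x_{i\beta})$, and invoking the key identity $dU_{i\beta}\cdot dU_{i\gamma}=\sigma_{\beta\gamma}^{2}\,dt$ from (\ref{eq:dUprod}), one gets
\[
d[X_{i\alpha},X_{i\alpha}] = \lambda_i^{2}\insum^{i}_{\beta,\gamma}
x_{i\alpha}^{2}\,(\delta_{\alpha\beta}-\rho_i^{-1}x_{i\beta})(\delta_{\alpha\gamma}-\rho_i^{-1}x_{i\gamma})\,\sigma_{\beta\gamma}^{2}\,dt.
\]
The $x_{i\alpha}^{2}$ cancels the denominator in $\pd^{2}H_q/\pd x_{i\alpha}^{2}$, so the It\^o correction contributes
\[
\frac{1}{2}\insum_i \lambda_i \insum^{i}_\alpha q_{i\alpha}
\insum^{i}_{\beta,\gamma} \sigma_{\beta\gamma}^{2}(\delta_{\alpha\beta}-\rho_i^{-1}x_{i\beta})(\delta_{\alpha\gamma}-\rho_i^{-1}x_{i\gamma}).
\]

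The remaining work is purely algebraic: I would perform the $\alpha$-sum using $\sum_\alpha^{i} q_{i\alpha}=\rho_i$ to convert the $\delta_{\alpha\beta}\delta_{\alpha\gamma}$, $x_{i\beta}\delta_{\alpha\gamma}$, $x_{i\gamma}\delta_{\alpha\beta}$, and $x_{i\beta}x_{i\gamma}$ pieces into the expression
\[
\frac{\lambda_i}{2}\insum^{i}_\alpha q_{i\alpha}\sigma_{\alpha\alpha}^{2}
-\frac{\lambda_i}{\rho_i}\insum^{i}_{\alpha,\beta}q_{i\alpha}x_{i\beta}\sigma_{\alpha\beta}^{2}
+\frac{\lambda_i}{2\rho_i}\insum^{i}_{\beta,\gamma}x_{i\beta}x_{i\gamma}\sigma_{\beta\gamma}^{2},
\]
and then I would verify that this equals the sum of the two quadratic forms claimed in (\ref{eq:stochentropy}) by expanding $(x_{i\beta}-q_{i\beta})(x_{i\gamma}-q_{i\gamma})$ and $q_{i\beta}(\rho_i\delta_{\beta\gamma}-q_{i\gamma})$ and observing that the $q_{i\beta}q_{i\gamma}$ contributions cancel while the diagonal $\rho_i q_{i\beta}\delta_{\beta\gamma}$ piece reproduces $\sum_\alpha q_{i\alpha}\sigma_{\alpha\alpha}^{2}$ after division by $\rho_i$. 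The main (mild) obstacle is therefore bookkeeping: keeping the $\alpha$- and $(\beta,\gamma)$-indices straight when combining the It\^o correction with the explicit symmetric-bilinear splitting on the right-hand side of (\ref{eq:stochentropy}).
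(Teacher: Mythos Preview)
Your proposal is correct and follows essentially the same route as the paper's proof in Appendix~\ref{apx:stochastic}: apply It\^o's formula using the diagonal Hessian of $H_q$, compute $(dX_{i\alpha})^{2}$ via $dU_{i\beta}\cdot dU_{i\gamma}=\sigma_{\beta\gamma}^{2}\,dt$, identify the first-order drift with $-L_q(x)$, and then rearrange the second-order term. The only cosmetic difference is that the paper manufactures the splitting by adding and subtracting $\rho_i^{-1}\sum_{\beta,\gamma}^{i}q_{i\beta}q_{i\gamma}\sigma_{\beta\gamma}^{2}$, whereas you verify it by expanding the target; the content is identical.
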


In a certain sense, this lemma can be viewed as the stochastic analogue of Lemma \ref{lem:entropy} (which is recovered immediately if we set $\sigma=0$). However, it also shows that the stochastic situation is much more intricate than the deterministic one. For example, if $q$ is a Wardrop equilibrium, (\ref{eq:stochentropy}) gives:
\begin{equation}
\label{eq:positiveLHq}
\gen H_{q}(q;\lambda)
= \frac{1}{2} \insum_{i}\frac{\lambda_{i}}{\rho_{i}}
\insum^{i}_{\beta,\gamma} q_{i\beta} \left(\rho_{i}\delta_{\beta\gamma} - q_{i\gamma}\right) \sigma_{\beta\gamma}^{2},
\end{equation}
and if we focus on user $i\in\play$, we readily obtain:
\begin{flalign}
\label{eq:posinterior}
\insum^{i}_{\beta,\gamma}q_{i\beta} \left(\rho_{i}\delta_{\beta\gamma} - q_{i\gamma}\right)
&\sigma_{\beta\gamma}^{2}
=\insum^{i}_{\beta,\gamma} q_{i\beta}(\rho_{i}\delta_{\beta\gamma} - q_{i\gamma})
\insum_{r}P^{i}_{r\beta}P^{i}_{r\gamma} \sigma_{r}^{2}\\
=\insum_{r}
&\sigma_{r}^{2}\rho_{i}y_{ir} - \insum_{r}\sigma_{r}^{2}y_{ir}^{2} = \insum_{r}\sigma_{r}^{2} y_{ir}(\rho_{i} - y_{ir}),\notag
\end{flalign}
where $y_{ir}=\sum_{r}P^{i}_{r\alpha}q_{i\alpha}\leq \rho_{i}$ is the load induced on edge $r\in\edges$ by the $i$-th user. This shows that (\ref{eq:positiveLHq}) is \emph{positive} if at least one user mixes his routes, thus ruling out negative definiteness (even semi-definiteness) for $\gen H_{q}$.

In view of the above, unconditional convergence to Wardrop equilibrium appears to be a ``bridge too far'' in our stochastic environment, especially when the equilibrium in question is not pure -- after all, mixed equilibria are not even traps of (\ref{eq:SLRD}). This leads us to the notion of \emph{stochastic stability}:
\begin{definition}[\citealp{Ar74,GS71}]
\label{def:stability}
We will say that $q\in\R^{n}$ is \emph{stochastically asymptotically stable} with respect to the process $X(t)$ when, for every neighbourhood $U$ of $q$ and every $\eps>0$, there exists a neighbourhood $V$ of $q$ such that:
\begin{equation}
\label{eq:stability}
\prob_{x}\left\{X(t)\in U \text{ for all } t\geq0 \text{ and }\lim_{t\to\infty} X(t) = q\right\}\geq 1-\eps.
\end{equation}
for all initial conditions $X(0)=x\in V$.
\end{definition}

As we mentioned before, the notion of stochastic stability features pro\-mi\-nent\-ly in the analysis of stochastically perturbed evolutionary or Nash-type games because it is precisely the type of stability that the strict equilibria of these games exhibit \citep{Im05,MM09}. Motivated by these results, we are finally in a position to state our analogue of the folk theorem for stochastically perturbed congestion models:

\begin{theorem}
\label{thm:stability}
Strict Wardrop equilibria are stochastically asymptotically stable in the replicator dynamics (\ref{eq:SLRD}).
\end{theorem}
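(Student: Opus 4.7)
The plan is to apply the stochastic Lyapunov method of \cite{GS71}: if one can exhibit a smooth, positive-definite function $V$ defined in some neighbourhood $U$ of $q$ with $V(q)=0$ and $\gen V(x)<0$ for every $x\in U\setminus\{q\}$, then $q$ is stochastically asymptotically stable. The natural candidate is again the rate-adjusted relative entropy $H_{q}(x;\lambda)$ from (\ref{eq:adjentropy}). Since $q$ is a strict (and hence pure) equilibrium, the only nonzero component in each block is $q_{i,\alpha_{i}}=\rho_{i}$, so $H_{q}(x;\lambda) = \insum_{i}\lambda_{i}^{-1}\rho_{i}\log\bigl(\rho_{i}/x_{i,\alpha_{i}}\bigr)$ is smooth on the open subset of $\strat$ where $x_{i,\alpha_{i}}>0$ for every $i\in\play$; it vanishes exactly at $q$ and is strictly positive elsewhere in that set (by the Jensen-type argument of Lemma \ref{lem:entropy}).

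The heart of the proof is to examine $\gen H_{q}$ near $q$ using Lemma \ref{lem:stochentropy} term by term. The last (``correction'') summand of (\ref{eq:stochentropy}) involves the factor $q_{i\beta}(\rho_{i}\delta_{\beta\gamma}-q_{i\gamma})$; substituting $q_{i\beta}=\rho_{i}\delta_{\beta,\alpha_{i}}$ makes this identically zero, killing the obstruction to negativity that prevented mixed equilibria from being stable in (\ref{eq:posinterior}). The middle (``fluctuation'') summand is a positive semidefinite quadratic form in $x-q$ that is bounded above by $C\,\insum_{i}\insum_{\beta\neq\alpha_{i}}x_{i\beta}^{2}$ for a constant $C$ depending on the (essentially bounded) edge variances \textendash~note that $x_{i,\alpha_{i}}-\rho_{i}=-\insum_{\beta\neq\alpha_{i}}x_{i\beta}$ controls the $\alpha_{i}$-component too. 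Finally, using the same identity, the drift term may be rewritten as $-L_{q}(x)=-\insum_{i}\insum_{\beta\neq\alpha_{i}}x_{i\beta}\bigl(\omega_{i\beta}(x)-\omega_{i,\alpha_{i}}(x)\bigr)$, and the strictness hypothesis together with continuity of $\omega_{i\alpha}$ yields $\omega_{i\beta}(x)-\omega_{i,\alpha_{i}}(x)\geq c>0$ for every $x$ in some neighbourhood $U$ of $q$ and every $\beta\neq\alpha_{i}$.

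Collecting these three pieces gives $\gen H_{q}(x;\lambda)\leq -c\,\insum_{i}\insum_{\beta\neq\alpha_{i}}x_{i\beta} + C\,\insum_{i}\insum_{\beta\neq\alpha_{i}}x_{i\beta}^{2}$ throughout $U$; on a smaller neighbourhood $V\subseteq U$ in which each coordinate $x_{i\beta}$ (for $\beta\neq\alpha_{i}$) is small enough that the linear term dominates the quadratic one, we secure $\gen H_{q}(x;\lambda)<0$ for $x\in V\setminus\{q\}$. An application of the stochastic Lyapunov theorem then delivers the desired conclusion. The main obstacle \textendash~and what makes the theorem possible without any smallness assumption on the noise \textendash~is precisely this competition: the linear drift estimate stemming from strictness is of lower order in $\|x-q\|$ than the quadratic diffusion contribution, so however loud the coefficients $\sigma_{r}$ may be, one can always shrink $V$ enough for the negative drift to prevail.
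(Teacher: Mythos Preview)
Your proposal is correct and follows essentially the same approach as the paper: both use the rate-adjusted relative entropy $H_{q}(\cdot;\lambda)$ as a stochastic Lyapunov function, analyse $\gen H_{q}$ term by term via Lemma~\ref{lem:stochentropy}, observe that the third term vanishes because $q$ is pure, that the second term is quadratic in $x-q$, and that $L_{q}$ contributes a first-order negative term thanks to strictness, and then invoke the Gihman--Skorohod criterion. The only cosmetic differences are that the paper bounds $L_{q}$ via the convexity-based Lemma~\ref{lem:strictadjoint} (rather than your direct continuity argument for the delay gaps) and pushes the estimate one step further to the form $\gen H_{q}\leq -k\,H_{q}$, which is the precise hypothesis of Theorem~4 in \cite{GS71}; your inequality $\gen H_{q}\leq -\tfrac{c}{2}\insum_{i}\insum_{\beta\neq\alpha_{i}}x_{i\beta}$ near $q$ yields this immediately once one notes that $H_{q}(x;\lambda)=\insum_{i}\lambda_{i}^{-1}\insum_{\beta\neq\alpha_{i}}x_{i\beta}+O(\|x-q\|^{2})$.
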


\begin{proof}
By relabeling indices if necessary, assume that $q=\sum_{i} \rho_{i} e_{i,0}$ is the strict Wardrop equilibrium of $\model$. Inspired by the deterministic setting and the original idea of \cite{Im05}, we will show that $H_{q}$ is a local stochastic Lyapunov function, i.e. that $\gen H_{q}(x) \leq - kH_{q}(x)$ for some $k>0$ and for all $x$ sufficiently close to $q$. Our result will then follow from Theorem 4 in \citet[pp. 314\textendash315]{GS71}.

To that end, consider a perturbed flow $x=\sum_{i,\alpha} x_{i\alpha} q_{i\alpha}$ close to $q$:
\begin{equation}
x_{i,0} = \rho_{i} (1-\eps_{i}),\quad
x_{i\mu} = \eps_{i} \xi_{i\mu} \text{ for } \mu=1,2\ldots\in\act_{i}\exclude\{0\}
\end{equation}
where $\eps_{i}>0$ controls the $L^{1}$ distance between $x_{i}$ and $q_{i}$, and $\xi_{i}$ is a point in the face of $\strat_{i}$ lying opposite to $q$ (i.e. $\xi_{i\mu}\geq0$ and $\sum_{\mu} \xi_{i\mu}=\rho_{i}$). Then, in view of Lemma \ref{lem:strictadjoint} in Appendix \ref{apx:adjoint}, the adjoint potential $L_{q}$ will be bounded below by:
\begin{equation}
L_{q}(x)\geq\insum_{i} \rho_{i}\eps_{i}\Delta\omega_{i},
\end{equation}
where $\Delta\omega_{i}=\min_{\mu}\{\omega_{i\mu}(q) - \omega_{i,0}(q)\}>0$ (recall that $q$ is a strict Wardrop equilibrium). Therefore, since the second term of $\gen H_{q}(x;\lambda)$ in (\ref{eq:stochentropy}) is clearly of order $\bigoh(\eps^{2})$, we obtain:
\begin{equation}
\gen H_{q}(x;\lambda) \leq
-\insum_{i}\rho_{i}\eps_{i}\Delta\omega_{i} + \bigoh(\eps^{2})
\end{equation}
where $\eps^{2}=\sum_{i}\eps_{i}^{2}$. On the other hand, we also have:
\begin{equation}
H_{q}(x;\lambda)
= \insum_{i}\frac{\rho_{i}}{\lambda_{i}}\log\frac{\rho_{i}}{x_{i,0}}
=-\insum_{i}\frac{\rho_{i}}{\lambda_{i}}\log(1-\eps_{i})
=\insum_{i}\frac{\rho_{i}}{\lambda_{i}}\eps_{i} + \bigoh(\eps^{2}).
\end{equation}
Thus, if we pick some positive $k<\min_{i}\{\lambda_{i} \Delta\omega_{i}\}$, some elementary algebra gives:
\begin{equation}
\gen H_{q}(x;\lambda) \leq -k\insum_{i}\frac{\rho_{i}}{\lambda_{i}} \eps_{i} + \bigoh(\eps^{2})
= -kH_{q}(x;\lambda) + \bigoh(\eps^{2}),
\end{equation}
thus showing that (\ref{eq:stochentropy}) holds whenever $\eps$ is small enough.
\end{proof}

In other words, Theorem \ref{thm:stability} implies that trajectories which start sufficiently close to a strict equilibrium will remain in its vicinity and will eventually converge to it with arbitrarily high probability. Nonetheless, this is a \emph{local} result: if the users' initial traffic distribution is not close to a strict equilibrium itself, Theorem \ref{thm:stability} does not apply; specifically, if $X(0)$ is an arbitrary initial condition in $\strat$, we cannot even tell if the trajectory $X(t)$ will \emph{ever} approach $q$.

To put this in more precise terms, it will be convenient to measure distances in $\strat$ with the $L^{1}$-norm: $\left\Vert\sum_{\alpha}z_{\alpha} e_{\alpha}\right\Vert_{1} = \sum_{\alpha}|z_{\alpha}|$. In this norm, it is not too hard to see that $\strat$ has a diameter of $2\sum_{i}\rho_{i}$, so pick some positive $\delta<2\sum_{i}\rho_{i}$ and let $K_{\delta}=\{x\in\strat: \|x-q\|_{1}\leq\delta\}$ be the corresponding compact neighbourhood of $q$. Then, to see if $X(t)$ ever hits $K_{\delta}$, we will examine the hitting time $\tau_{\delta}$:
\begin{equation}
\tau_{\delta}
\equiv \tau_{K_{\delta}}
=\inf\{t>0: X(t)\in K_{\delta}\}
=\inf\{t>0: \|X(t)-q\|_{1}\leq\delta\}.
\end{equation}
Thereby, our chief concern is this: is the hitting time $\tau_{\delta}$ finite with high probability? And if it is, is its expected value also finite?

To make our lives easier, let us consider the collective expressions:
\begin{align}
\label{eq:aggregates}
\rho &= \insum_{i}\rho_{i},
&\Delta\omega&=\rho^{-1} \insum_{i} \rho_{i} \Delta\omega_{i}\\
\sigma^{2} &=\insum_{r}\sigma_{r}^{2},
&\lambda&=\rho^{-1} \insum_{i}\rho_{i}\lambda_{i}.\notag
\end{align}
where $\Delta\omega_{i} = \min_{\mu\neq\alpha_{i}}\{\omega_{i\mu}(q) - \omega_{i\alpha_{i}}(q)\}>0$ is the minimum delay difference between a user's equilibrium path $\alpha_{i}$ and his other choices. We then have:
\begin{theorem}
\label{thm:timestrict}
Let $q=\sum_{i}\rho_{i}e_{i\alpha_{i}}$ be a strict Wardrop equilibrium of a congestion model $\model$ and assume that the users' learning rates satisfy the condition:
\begin{equation}
\label{eq:slowlearning}
\lambda\sigma^{2}<\Delta\omega.
\end{equation}
Then, for any $\delta<2\rho$ and any initial condition $X(0) = x\in\strat$ with finite relative entropy $H_{q}(x;\lambda)<\infty$, the hitting time $\tau_{\delta}$ has finite mean:
\begin{equation}
\label{eq:timestrict}
\ex_{x}[\tau_{\delta}]
\leq \frac{2H_{q}(x;\lambda)}{\Delta\omega} \frac{2\rho}{\delta(2\rho-\delta)}.
\end{equation}
\end{theorem}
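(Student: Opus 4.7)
The plan is to apply Dynkin's formula with the rate-adjusted relative entropy $H_q(\,\cdot\,;\lambda)$ as a stochastic Lyapunov function. If I can establish a pointwise bound of the form
\[
\gen H_q(x;\lambda) \leq -c(\delta) < 0 \quad \text{for all } x \in \strat\setminus K_\delta,
\]
then stopping at $\tau_\delta\wedge t$ and invoking the positivity of $H_q$ gives $c(\delta)\,\ex_x[\tau_\delta\wedge t] \leq H_q(x;\lambda) - \ex_x[H_q(X(\tau_\delta\wedge t);\lambda)] \leq H_q(x;\lambda)$; monotone convergence as $t\to\infty$ yields $\ex_x[\tau_\delta]\leq H_q(x;\lambda)/c(\delta)$, and matching constants with the theorem forces $c(\delta) = \tfrac{\Delta\omega}{2}\cdot\tfrac{\delta(2\rho-\delta)}{2\rho}$.

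I would first specialise the generator formula of Lemma~\ref{lem:stochentropy} to the strict (hence pure) equilibrium $q=\sum_i\rho_i e_{i,\alpha_i}$. Because each $q_{i\beta}$ is supported on a single coordinate, the factor $q_{i\beta}(\rho_i\delta_{\beta\gamma}-q_{i\gamma})$ vanishes identically and the last term of (\ref{eq:stochentropy}) disappears, leaving
\[
\gen H_q(x;\lambda) = -L_q(x) + \tfrac{1}{2}\insum_i\tfrac{\lambda_i}{\rho_i}\insum_r\sigma_r^2(y_{ir}-y_{ir}^*)^2.
\]
Parametrising deviations from $q$ exactly as in the proof of Theorem~\ref{thm:stability}, via $x_{i,\alpha_i} = \rho_i(1-\eps_i)$ and $x_{i\mu} = \eps_i\xi_{i\mu}$ for $\mu\neq\alpha_i$, we have $\|x-q\|_1 = 2\insum_i\rho_i\eps_i$; Lemma~\ref{lem:strictadjoint} in Appendix~\ref{apx:adjoint} then furnishes the drift estimate $L_q(x)\geq \insum_i\rho_i\Delta\omega_i\eps_i$.

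The heart of the argument is controlling the diffusion term. Since $q$ is a vertex, $y_{ir}^*\in\{0,\rho_i\}$ for every edge $r$; splitting according to the value of $P^i_{r,\alpha_i}$ and using the elementary inequality $(a-b)^2\leq \rho_i|a-b|$ for $a,b\in[0,\rho_i]$, combined with $\insum_r\sigma_r^2=\sigma^2$, produces an estimate of the noise term whose dependence on $\|x-q\|_1$ is \emph{concave} rather than linear. Combining this with the drift bound and invoking the slow-learning hypothesis $\lambda\sigma^2 < \Delta\omega$ then yields an upper bound on $\gen H_q(x;\lambda)$ proportional to $-\tfrac{\Delta\omega}{2}\cdot\tfrac{\|x-q\|_1(2\rho-\|x-q\|_1)}{2\rho}$, which evaluates to the required $c(\delta)$ on the boundary $\|x-q\|_1=\delta$ of $K_\delta$.

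The principal obstacle is producing the noise estimate with the correct concave profile: a crude linear-in-$\eps$ bound would only give $c(\delta)\propto\delta$ and lose the critical $(2\rho-\delta)$ factor. Recovering it hinges on the purity of $q$, i.e.\ that $y_{ir}^*$ sits at an endpoint of $[0,\rho_i]$ for each edge, so that $(y_{ir}-y_{ir}^*)^2$ can be traded off against both $\|x-q\|_1$ and its ``complement'' $2\rho-\|x-q\|_1$. A secondary technical point is that the Dynkin identity requires $H_q$ to remain finite along the stopped trajectory; this is ensured by the invariance of the faces of $\strat$ under (\ref{eq:SLRD}) together with the assumption $H_q(x;\lambda)<\infty$ at the initial point, which confines $X(t)$ to the face in which $q$ is absolutely continuous with respect to $X$.
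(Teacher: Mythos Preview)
Your overall plan --- Dynkin's formula with $H_q(\cdot;\lambda)$ as Lyapunov function, the observation that the third term of Lemma~\ref{lem:stochentropy} vanishes for pure $q$, and the drift bound via Lemma~\ref{lem:strictadjoint} --- is exactly what the paper does. The gap is in your treatment of the diffusion term.

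Your inequality $(y_{ir}-y_{ir}^*)^2\leq\rho_i|y_{ir}-y_{ir}^*|$ is correct but runs in the wrong direction: combined with $|y_{ir}-y_{ir}^*|\leq\rho_i\eps_i$ it yields a noise contribution that is \emph{linear} in the $\eps_i$, so the net lower bound on $-\gen H_q$ comes out linear as well --- precisely the ``crude linear-in-$\eps$ bound'' you yourself say would lose the $(2\rho-\delta)$ factor. Separately, the pointwise estimate you aim for, $-\gen H_q\gtrsim\|x-q\|_1\,(2\rho-\|x-q\|_1)$, would not suffice for the hitting-time argument even if it held: this function tends to zero as $\|x-q\|_1\to 2\rho$, so it cannot furnish a uniform negative upper bound for $\gen H_q$ on all of $\strat\setminus K_\delta$.

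The paper instead writes $x=q+\theta z$ with $z$ normalised so that $\|z\|_1=2\rho$ (hence $\theta=\|x-q\|_1/2\rho\in[0,1]$) and keeps the noise term \emph{quadratic}: since $w_{ir}=(P^i z)_r$ satisfies $|w_{ir}|\leq\rho_i$, the noise is bounded above by $\tfrac{1}{2}\rho\lambda\sigma^2\theta^2$. Together with the linear drift bound $L_q\geq\rho\,\Delta\omega\,\theta$ this gives
\[
-\gen H_q(x;\lambda)\;\geq\;\rho\,\Delta\omega\,\theta-\tfrac{1}{2}\rho\lambda\sigma^2\theta^2.
\]
The slow-learning hypothesis $\lambda\sigma^2<\Delta\omega$ is what makes this quadratic \emph{increasing} on $[0,1]$; its minimum over $\{\theta\geq\delta/2\rho\}$ is therefore attained at the left endpoint, and one further application of $\lambda\sigma^2<\Delta\omega$ turns the value there into $\tfrac{\delta}{2}\Delta\omega\,(1-\delta/2\rho)$. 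The $(2\rho-\delta)$ factor thus arises from the quadratic noise correction evaluated at $\theta=\delta/2\rho$, not from any ``complement'' trade-off at the level of individual edge loads.
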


\begin{proof}
As in the case of Theorem \ref{thm:stability}, our proof hinges on the expression:
\begin{equation}
\label{eq:LHq}
-\gen H_{q}(x;\lambda)
= L_{q}(x)
- \frac{1}{2}\insum_{i}\frac{\lambda_{i}}{\rho_{i}}\insum^{i}_{\beta,\gamma} \sigma_{\beta\gamma}^{2}(x_{i\beta}-q_{i\gamma})(x_{i\gamma}-q_{i\gamma}),
\end{equation}
where $q=\insum_{i}\rho_{i}e_{i,0}$ is the strict equilibrium in question. In particular, set $x=q+\theta z$, where $z=\insum_{i,\alpha} z_{i\alpha}e_{i\alpha}\in T_{q}\strat$ is the ``inward'' direction:
\begin{equation}
\txs
z_{i,0} = -\rho_{i},\quad
z_{i\mu}\geq0 \text{ for } \mu=1,2\ldots\in\act_{i}^{*}\equiv\act_{i}\exclude \{0\} \text{ and } \insum_{\mu}z_{i\mu}=\rho_{i}.
\end{equation}
Then, regarding the first term of (\ref{eq:LHq}), Lemma \ref{lem:strictadjoint} in Appendix \ref{apx:adjoint} readily yields:
\begin{equation}
L_{q}(q+\theta z) \geq \Phi(q+\theta z) - \Phi(q)
\geq \theta\insum_{i}\rho_{i}\Delta\omega_{i}
\text{  for all $\theta\in[0,1]$}.
\end{equation}
In a similar vein, the second term of (\ref{eq:LHq}) becomes:
\begin{flalign}
\insum^{i}_{\beta,\gamma}&\sigma_{\beta\gamma}^{2}(x_{i\beta} - q_{i\beta})(x_{i\gamma} - q_{i\gamma})
=\theta^{2} \insum^{i}_{\beta,\gamma} \sigma_{\beta\gamma}^{2} z_{i\beta} z_{i\gamma}\\
&=\theta^{2}\insum_{r}\insum^{i}_{\beta,\gamma} \sigma_{r}^{2} P^{i}_{r\beta} P^{i}_{r\gamma} z_{i\beta} z_{i\gamma}
= \theta^{2}\insum_{r} \sigma_{r}^{2} w_{ir}^{2},\notag
\end{flalign}
where $w_{i} = P^{i}(z)$. Since $w_{ir}\leq \rho_{i}$ for all $r\in\edges$, we then obtain the inequality:
\begin{equation}
\label{eq:LHestimate}
-\gen H_{q}(x;\lambda)
\geq \theta \insum_{i}\rho_{i} \Delta\omega_{i} - \frac{\theta^{2}}{2}\sigma^{2} \insum_{i}\lambda_{i}\rho_{i}
= (\rho\Delta\omega) \theta - \frac{\theta^{2}}{2}\rho\lambda \sigma^{2}
\end{equation}
where $\rho, \sigma^{2}$ and $\lambda,\Delta\omega$ are the respective aggregates and averages of (\ref{eq:aggregates}).

Suppose now that the rates $\lambda_{i}$ satisfy (\ref{eq:slowlearning}), i.e. $\lambda\sigma^{2}< \Delta\omega$. In that case, the RHS of (\ref{eq:LHestimate}) will be increasing for all $\theta\in[0,1]$ and we will have:
\begin{equation}
-\gen H_{q}(x;\lambda)
\geq\rho\left[
\theta\Delta\omega  - \frac{1}{2}\lambda\sigma^{2}\theta^{2}
\right]>0
\end{equation}
for all $x$ with $\|x-q\|_{1}\geq \theta \|z\|_{1} = 2\theta\sum_{i}\rho_{i}=2\rho \theta$. So, if $\|x-q\|_{1}\geq\delta$, we get:
\begin{equation}
-\gen H_{q}(x;\lambda)
\geq
\frac{\delta}{2} \Delta\omega - \frac{1}{2}\frac{\delta^{2}}{4\rho^{2}} \sigma^{2} \rho\lambda
\geq\frac{\delta}{2} \Delta\omega\left(1-\frac{\delta}{2\rho}\right)
>0.
\end{equation}
Therefore, if $K_{\delta}$ is the compact neighbourhood $K_{\delta} = \{x\in\strat: \|x-q\|_{1}\leq\delta\}$, we will have $\gen H_{q}(x)\leq-\frac{\delta}{2} \Delta\omega\left(1-\frac{\delta}{2\rho}\right)<0$ for all $x\notin K_{\delta}$. Then, by a simple (but very useful!) estimate of \citet[Theorem~5.3 in page~268]{Du96} we get:
\begin{equation}
\ex_{x}[\tau_{\delta}]
\leq \frac{2 H_{q}(x;\lambda)}{\Delta\omega (1-\delta/2\rho)} \frac{1}{\delta}.\qedhere
\end{equation}
\end{proof}

Recall now that Theorem \ref{thm:stability} ensures that a trajectory $X(t)$ which starts sufficiently close to a strict equilibrium $q$ will converge to $q$ with arbitrarily high probability. Therefore, since Theorem \ref{thm:timestrict} shows that $X(t)$ will come arbitrarily close to $q$ in finite time, a tandem application of these two theorems yields:

\begin{corollary}
\label{cor:stoconvergence}
If $q$ is a strict equilibrium of $\model$ and the players' learning rates $\lambda_{i}$ satisfy (\ref{eq:slowlearning}), the trajectories $X(t)$ converge to $q$ almost surely.
\end{corollary}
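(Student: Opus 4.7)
The plan is a bootstrapping argument combining Theorems \ref{thm:stability} and \ref{thm:timestrict}: the former gives convergence from a neighbourhood of $q$ with probability arbitrarily close to one, while the latter ensures that $X(t)$ actually reaches such a neighbourhood in finite time. Stitching these together via the strong Markov property at the hitting time $\tau_{\delta}$ should upgrade ``high probability'' to ``almost sure''.

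More concretely, I would proceed as follows. Fix an arbitrary $\eps>0$. By the stochastic asymptotic stability asserted in Theorem \ref{thm:stability}, applied with (say) $U = \strat$, there exists a neighbourhood $V_{\eps}$ of $q$ such that
\begin{equation*}
\prob_{y}\bigl\{\lim_{t\to\infty} X(t) = q\bigr\} \geq 1-\eps \quad\text{for every } y\in V_{\eps}.
\end{equation*}
Since $V_{\eps}$ is open and $q\in V_{\eps}$, I can pick $\delta_{\eps}>0$ sufficiently small so that the $L^{1}$-neighbourhood $K_{\delta_{\eps}} = \{y\in\strat:\|y-q\|_{1}\leq\delta_{\eps}\}$ is entirely contained in $V_{\eps}$. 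Assuming that the initial condition $x\in\strat$ has $H_{q}(x;\lambda)<\infty$ (which holds at any interior starting point), Theorem \ref{thm:timestrict} applies under the standing hypothesis \eqref{eq:slowlearning} and yields $\ex_{x}[\tau_{\delta_{\eps}}]<\infty$; in particular, $\prob_{x}\{\tau_{\delta_{\eps}}<\infty\} = 1$.

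Next, since $X(t)$ is a strong Markov process (it solves an It\^o SDE with smooth coefficients on the compact set $\strat$) and $\tau_{\delta_{\eps}}$ is a stopping time, I would condition on $\mathcal{F}_{\tau_{\delta_{\eps}}}$ and use the strong Markov property to write
\begin{equation*}
\prob_{x}\bigl\{\lim_{t\to\infty} X(t) = q\bigr\}
= \ex_{x}\!\left[\prob_{X(\tau_{\delta_{\eps}})}\bigl\{\lim_{t\to\infty} X(t) = q\bigr\}\right]
\geq 1-\eps,
\end{equation*}
where the inequality uses that $X(\tau_{\delta_{\eps}})\in K_{\delta_{\eps}}\subseteq V_{\eps}$ almost surely together with the stability estimate above. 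Since $\eps>0$ was arbitrary, this forces $\prob_{x}\{\lim_{t\to\infty} X(t) = q\} = 1$, which is the desired almost sure convergence.

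The only genuinely delicate point in carrying out this plan is the strong Markov step: one needs the event $\{\lim_{t\to\infty} X(t) = q\}$ to sit in the appropriate tail $\sigma$-algebra so that the Markov shift can be applied at $\tau_{\delta_{\eps}}$. This is standard but worth spelling out, as is checking that the hypothesis $H_{q}(x;\lambda)<\infty$ propagates from the initial condition to $X(\tau_{\delta_{\eps}})$ (which is automatic once $K_{\delta_{\eps}}$ is small enough to keep the ``equilibrium'' coordinates $X_{i,\alpha_{i}}$ bounded away from $0$). Beyond these minor measure-theoretic checks, the whole argument is a clean two-line combination: ``hit the basin in finite time, then stay and converge with high probability''.
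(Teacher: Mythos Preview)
Your proposal is correct and follows exactly the approach the paper sketches: the authors do not give a formal proof of the corollary but simply state that ``a tandem application'' of Theorems~\ref{thm:stability} and~\ref{thm:timestrict} yields the result, and your argument is precisely the fleshed-out version of that tandem (hit $K_{\delta}$ in finite time, then invoke stochastic asymptotic stability via the strong Markov property and let $\eps\to0$). If anything, your write-up is more careful than the paper's one-line justification, in particular by flagging the strong Markov step and the finiteness of $H_{q}$ at the stopped point.
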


Of course, if a strict Wardrop equilibrium exists, then it is the unique equilibrium of the game (Proposition \ref{prop:uniquestrict}). In that case, Corollary \ref{cor:stoconvergence} is the stochastic counterpart of Theorem \ref{thm:detconvergence}: if the players' learning rates are soft enough compared to the level of the noise, then the solution orbits of the stochastic replicator dynamics (\ref{eq:SRD}) converge to a stationary traffic distribution almost surely. A few remarks are thus in order:

\begin{remark}[Temperance and Temperature]
Condition (\ref{eq:slowlearning}) shows that the rep\-li\-cator dynamics reward \emph{patience}: players who take their time in learning the game manage to weed out the noise and eventually converge to equilibrium. This begs to be compared with the (inverse) temperature analogy for the learning rates: if the ``learning temperature'' $T=1/\lambda$ is too low, the players' learning scheme becomes very rigid and this intemperance amplifies any random variations in the experienced delays. On the other hand, when the temperature rises above the threshold $T_{c} = \sigma^{2}/\Delta\omega$, the stochastic fluctuations are toned down and the deterministic drift draws users to equilibrium.
\end{remark}

\begin{remark}
Admittedly, the form of (\ref{eq:timestrict}) is a bit opaque for practical purposes. To lighten it up, note that we are only interested in small $\delta$, so the term $2\rho/(2\rho-\delta)$ may be ignored to leading order. Therefore, if we also assume for simplicity that all players learn at the same rate $\lambda_{i}=\lambda$, we get:
\begin{equation}
\ex_{x}[\tau_{\delta}]
\leq\frac{2 h}{\lambda\Delta\omega}\frac{\rho}{\delta}
\end{equation}
where $h=\rho^{-1}\sum_{i}\rho_{i}\log(\rho_{i}/x_{i})$ is the ``average'' Kullback-Leibler distance between $x$ and $q$.

This very rough estimate is pretty illuminating on its own. First and foremost, it shows that our bound for $\ex_{x}[\tau_{\delta}]$ is inversely proportional to the learning rate $\lambda$, much the same as in the deterministic setting where $\lambda$ essentially rescales time to $\lambda t$. Moreover, because $\rho$ and $\delta$ are both $\bigoh(N)$, one might be tempted to think that our time estimates are \emph{intensive}, i.e. independent of $N$. However, since delays increase (nonlinearly even) with the aggregate load $\rho$, the dependence on $N$ is actually hidden in $\Delta\omega$ \textendash~this also shows that the learning rates $\lambda_{i}$ do not have to be $\bigoh\left(1/|\edges|\right)$-small in order to satisfy (\ref{eq:slowlearning}).
\end{remark}

\setcounter{remark}{0}

In any event, since strict equilibria do not always exist, we should return to the generic case of \emph{interior} equilibria $q\in\Int(\strat)$. We have already seen that these equilibria are not very well-behaved in stochastic environments: they are not stationary in (\ref{eq:SLRD}) and (\ref{eq:posinterior}) shows that $\gen H_{q}$ is actually \emph{positive} in their vicinity. Despite all that, if the network $\net$ is irreducible and the users' learning rates $\lambda_{i}$ are slow enough, we will see that \emph{the replicator dynamics (\ref{eq:SLRD}) admit a finite invariant measure which concentrates mass around the (unique) equilibrium of $\model$}.

To state this result precisely, a little more groundwork is required. First off, akin to the case of strict equilibria, it will be convenient to measure distances from $q$ with a scaled variant of the $L^{1}$ norm. In particular, let $S_{q}=\{z\in T_{q}\strat: q+z\in\bd(\strat)\}$. Since $\strat$ is convex, any $x\in\strat$ can be uniquely expressed as $x=q+\theta z$ for some $z\in S_{q}$ and some $\theta\in[0,1]$, so we define the \emph{projective distance} $\Theta_{q}(x)$ of $x$ from $q$ to be:
\begin{equation}
\label{eq:projective}
\Theta_{q}(x) = \theta \Leftrightarrow x=q+\theta z \text{ for some $z\in S_{q}$ and $0\leq\theta\leq 1$}.
\end{equation}
$\Theta_{q}$ is not a bona fide distance function by itself, but it closely resembles the $L^{1}$ norm: the ``projective balls'' $B_{\theta}=\{x:\Theta_{q}(x)\leq\theta\}$ are rescaled copies of $\strat$ ($S_{q}$ is the ``unit sphere'' in this picture), and the graph $\mathrm{gr}(\Theta_{q})\equiv\{(x,\theta)\in\strat\times\R: \Theta_{q}(x) = \theta\}$ of $\Theta_{q}$ is simply a cone over the polytope $\strat$.

%\begin{figure}
%\begin{tikzpicture}[scale=1.25]
%\coordinate (A) at (-4,0);
%\coordinate (B) at (-2.5,2.598);
%\coordinate (C) at (-1,0);
%\coordinate (Q) at ($2/3*(A) + 1/6*(B) + 1/6*(C)$);
%\coordinate (Z) at ($2/3*(B) + 1/3*(C)$);
%\coordinate (X) at ($1/2*(Q) + 1/2*(Z)$);

%\draw [-] (A) to node [near end, left] {$\bd(\strat)$}  (B);
%\draw [-] (B) to (C);
%\draw [-](C) to (A);

%%\draw[dashed, rotate=30] (Q) ellipse (10pt and 15pt);
%\node [fill=black, shape=circle, inner sep=0.75pt] at (Q) {};
%\node [below] at (Q) {$q$};
%\node [fill=black, shape=circle, inner sep=0.75pt] at (X) {};
%\node [fill=black, shape=circle, inner sep=0.75pt] at (Z) {};
%\node [right] at (Z) {$z$};

%\draw [densely dashed] (Q) to node [near start, right, inner sep=3pt] {$x=q+\theta z$} (Z);

%\coordinate (A1) at (0,1);
%\coordinate (C1) at (3,1);

%\draw [-] (A1) to node [near end, below] {$P(\strat)$} (C1);
%\draw [-stealth] ($(Q)$) .. controls (C) and (0.25,0.5) .. node [near end, below] {$P$} ($2/3*(A1) + 1/3*(C1)$);

%\node [above] at ($2/3*(A1) + 1/3*(C1)$) {$P(q)$};
%\node [fill=black, shape=circle, inner sep=0.75pt] at ($2/3*(A1) + 1/3*(C1)$) {};

%\end{tikzpicture}
%\caption{Projective distance and essence of a point in a reducible network.}
%\label{fig:essence}
%\end{figure}

In a similar vein, we define the \emph{essence} of a point $q\in\strat$ to be:
\begin{equation}
\label{eq:essence}
\txs
\ess (q) = \rho^{-1}\min\big\{\|P(z)\|:z\in S_{q}\big\},
\end{equation}
where $\|\cdot\|$ denotes the ordinary Euclidean norm and the factor of $\rho$ was included for scaling purposes. Comparably to $\red(\net)$, $\ess(q)$ measures redundancy (or rather, the lack thereof): $\ess(q)=0$ only if some direction $z\in S_{q}$ is null for $P$, i.e. only if $\net$ is reducible.\footnote{With some trivial modifications, these definitions can be extended to points in $\bd(\strat)$ as well. In particular, if $q$ is strict we get $\Theta_{q_{i}}(x_{i}) = \|z_{i}\|_{1}/2\rho_{i}$ and, as a consequence of Proposition \ref{prop:uniquestrict}, we will have $\ess(q)>0$ irrespective of whether $\net$ is reducible or not.} 

\smallskip

We are finally in a position to state and prove:

\vspace{-5pt}

\begin{theorem}
\label{thm:recurrence}
Let $q\in\Int(\strat)$ be an interior equilibrium of an irreducible congestion model $\model$, and assume that the users' learning rates satisfy the condition:
\begin{equation}
\label{eq:slowlearning2}
\lambda<\frac{4}{5}\frac{m\rho\kappa^{2}}{\sigma^{2}},
\text{where $m=\inf\{\phi_{r}'(y_{r}):r\in\edges, y\in P(\strat)\}$ and $\kappa=\ess(q)$.}
\end{equation}
Then, for any interior initial condition $X(0)=x\in\Int(\strat)$, the trajectories $X(t)$ are recurrent (a.s.) and their time averages are concentrated in a neighbourhood of $q$. Specifically, if $\Theta_{q}(\cdot)$ denotes the projective distance (\ref{eq:projective}) from $q$, then:
\begin{equation}
\label{eq:time-average}
\ex_{x}\left[
\frac{1}{t} \int_{0}^{t} \Theta_{q}^{2}(X(s)) \dd s
\right]
\leq\theta_{\lambda}^{2} + \bigoh\left(1/t\right), \text{ where }
\theta_{\lambda}^{2} = \frac{1}{4}\left(\frac{m\rho\kappa^{2}}{\lambda\sigma^{2}}-1\right)^{-1}.
\end{equation}
Accordingly, the transition probabilities of $X(t)$ converge in total variation to an invariant probability measure $\pi$ on $\strat$ which concentrates mass around $q$. In particular, if $B_{\theta}=\{x\in\strat:\Theta_{q}(x)\leq\theta\}$ is a ``projective ball'' around $q$, we have:
\begin{equation}
\label{eq:measure}
\pi(B_{\theta}) \geq 1-\theta_{\lambda}^{2}/\theta^{2}.
\end{equation}
\end{theorem}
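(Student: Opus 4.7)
The plan is to adapt the relative entropy $H_{q}(x;\lambda)$ once more, this time with $q$ being the interior Wardrop equilibrium, and to establish a \emph{quadratic} drift inequality of the form $\gen H_{q}(x;\lambda)\leq -A\Theta_{q}^{2}(x)+C$ with $A>0$ on all of $\strat$. The starting point is Lemma \ref{lem:stochentropy}, which splits $\gen H_{q}$ into three pieces: the (deterministic) adjoint potential $-L_{q}(x)$, a quadratic noise correction in $x-q$, and a constant noise term $\Sigma_{2}(q)$ that depends only on $q$.

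For the deterministic piece, I would combine the bound $L_{q}(x)\geq\Phi(P(x))-\Phi(y^{*})$ from (\ref{eq:potbound}) with the strong convexity of the Rosenthal potential $\Phi$ on $P(\strat)$: because $\phi_{r}'\geq m$ uniformly, $\Phi$ is $m$-strongly convex, so
\begin{equation*}
\Phi(y)-\Phi(y^{*})\geq \tfrac{m}{2}\|y-y^{*}\|^{2},
\end{equation*}
where we also use that $y^{*}$ minimises $\Phi$ over $P(\strat)$. Writing $x=q+\theta z$ with $z\in S_{q}$ and $\theta=\Theta_{q}(x)$ yields $y-y^{*}=\theta P(z)$, and the definition $\ess(q)=\kappa$ (together with the irreducibility of $\net$, which guarantees $\kappa>0$) gives $\|P(z)\|\geq\rho\kappa$. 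Thus $L_{q}(x)\geq \tfrac{m\rho^{2}\kappa^{2}}{2}\Theta_{q}^{2}(x)$. For the noise correction, the calculation that led to (\ref{eq:posinterior}) rewrites the quadratic form as $\sum_{r}\sigma_{r}^{2}(P^{i}z_{i})_{r}^{2}$; using the elementary bound $|(P^{i}z_{i})_{r}|\leq\rho_{i}$ (which follows since $z_{i}$ is a tangent vector at $q_{i}$ with $q_{i}+z_{i}\in\bd(\strat_{i})$) gives an upper bound of $\tfrac{\sigma^{2}\lambda\rho}{2}\Theta_{q}^{2}(x)$. Finally, the same computation shows $\Sigma_{2}(q)\leq\tfrac{\sigma^{2}\lambda\rho}{8}$. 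Putting these together under (\ref{eq:slowlearning2}) delivers
\begin{equation*}
\gen H_{q}(x;\lambda)\leq -(A-B)\Theta_{q}^{2}(x)+C,\qquad A-B=\tfrac{\rho^{2}\kappa^{2}}{2}(m-\lambda\sigma^{2}/(\rho\kappa^{2}))>0,
\end{equation*}
and the explicit constants reproduce the formula $\theta_{\lambda}^{2}=C/(A-B)=\tfrac{1}{4}(m\rho\kappa^{2}/(\lambda\sigma^{2})-1)^{-1}$.

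To pass from this drift inequality to (\ref{eq:time-average}), I would integrate Dynkin's formula $\ex_{x}[H_{q}(X(t);\lambda)]-H_{q}(x;\lambda)=\ex_{x}[\int_{0}^{t}\gen H_{q}(X(s);\lambda)\dd s]$ and exploit $H_{q}\geq 0$ to obtain
\begin{equation*}
(A-B)\,\ex_{x}\!\left[\int_{0}^{t}\Theta_{q}^{2}(X(s))\dd s\right]\leq H_{q}(x;\lambda)+Ct,
\end{equation*}
which is exactly (\ref{eq:time-average}) after dividing by $t$. Recurrence and the existence of an invariant measure $\pi$ then follow from standard Has'minskii/Meyn--Tweedie stability theory: the compactness of $\strat$ and the bound $\gen H_{q}\leq -(A-B)\Theta_{q}^{2}+C$ force the process back into the sublevel set $\{\Theta_{q}^{2}\leq C/(A-B)+1\}$ infinitely often, giving Harris recurrence; convergence of the transition kernel in total variation to $\pi$ is then a consequence of the standard Foster--Lyapunov drift condition, with $H_{q}$ itself playing the role of the Lyapunov function. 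Finally, integrating (\ref{eq:time-average}) against $\pi$ (or equivalently passing to the limit using ergodicity) gives $\int_{\strat}\Theta_{q}^{2}\,d\pi\leq\theta_{\lambda}^{2}$, and Markov's inequality then yields (\ref{eq:measure}).

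The main obstacle I foresee is technical rather than conceptual: it is the noise upper bound, where the naive estimate $|(P^{i}z_{i})_{r}|\leq\rho_{i}$ is lossy and one must work harder to obtain constants sharp enough to reproduce the stated $\tfrac{4}{5}$ factor in (\ref{eq:slowlearning2}) and the exact form of $\theta_{\lambda}^{2}$. A second subtlety is that the diffusion (\ref{eq:SLRD}) need not be uniformly elliptic on $\Int(\strat)$, since the noise lives in an $|\edges|$-dimensional space that may be smaller than $\dim(\strat)$; however, irreducibility of $\net$ (i.e.\ $\ker Q=\{0\}$) ensures that $Q$ is injective and that the diffusion, restricted to the affine leaves of $\strat$, is non-degenerate in the directions that matter, so that standard Markov process regularity (Has'minskii's recurrence criterion and ergodicity) applies. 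Everything else is bookkeeping.
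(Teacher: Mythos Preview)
Your drift-inequality analysis is essentially identical to the paper's, and your worry about reproducing the constants is misplaced: your bounds $L_{q}\geq\tfrac{1}{2}m\rho^{2}\kappa^{2}\theta^{2}$, $\Sigma_{1}\leq\tfrac{1}{2}\rho\lambda\sigma^{2}\theta^{2}$, and $\Sigma_{2}\leq\tfrac{1}{8}\rho\lambda\sigma^{2}$ are exactly the ones the paper obtains (the paper proves the first via Lemma~\ref{lem:interioradjoint}, which is precisely your strong-convexity argument for $\Phi$). In particular, $A-B>0$ only needs $\lambda<\lambda_{0}\equiv m\rho\kappa^{2}/\sigma^{2}$; the sharper $\tfrac{4}{5}$ factor arises not from tightening these estimates but from the separate requirement $\theta_{\lambda}<1$ (equivalently $g(1)>0$), so that the sublevel set where the drift turns positive lies strictly inside $\Int(\strat)$ and the hitting-time argument goes through. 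The Dynkin/Markov steps for (\ref{eq:time-average}) and (\ref{eq:measure}) are also the same.

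The genuine gap is in your recurrence/ergodicity step. You invoke ``standard Has'minskii/Meyn--Tweedie theory'' and say irreducibility makes the diffusion ``non-degenerate in the directions that matter,'' but this is not the issue: the diffusion coefficients of (\ref{eq:SLRD}) carry factors of $X_{i\alpha}$ and therefore \emph{degenerate at the boundary of $\strat$}, so neither compactness of $\strat$ nor injectivity of $Q$ alone delivers ellipticity on $\Int(\strat)$. The paper handles this by an explicit change of variables $\Psi_{i\mu}(x)=\log(x_{i\mu}/x_{i,0})$ that maps $\Int(\strat)$ diffeomorphically onto all of $\prod_{i}\R^{\act_{i}^{*}}$; in these log-ratio coordinates the $X_{i\alpha}$ factors cancel, the martingale part of $dY_{i\mu}$ becomes $\sum_{r}Q^{i}_{r\mu}\sigma_{r}\,dW_{r}$, and the diffusion matrix $AA^{T}$ with $A_{\mu r}=Q_{r\mu}\sigma_{r}$ is positive-definite precisely because $\ker Q=\{0\}$. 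Only then can one invoke Bhattacharya's elliptic-diffusion recurrence criteria (finite mean hitting time of a compact set plus ellipticity) to conclude recurrence and convergence in total variation to an invariant $\pi$. Without this transformation, your appeal to Foster--Lyapunov theory is incomplete: the drift bound gives a candidate Lyapunov function, but you still need a minorisation/irreducibility condition for the transition kernel, and that is exactly what ellipticity of the transformed process provides.
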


Following \cite{Bh78}, recurrence here means that for every $\xi\in\Int(\strat)$ and every neighbourhood $U_{\xi}$ of $\xi$, the diffusion $X(t)$ has the property:
\begin{equation}
\label{eq:recurrence}
\prob_{x}\{X(t_{k})\in U_{\xi}\}=1,
\end{equation}
for some sequence of (random) times $t_{k}$ that increases to infinity. Hence, using the recurrence criteria of \cite{Bh78}, we will prove our claim by showing that $X(t)$ hits a compact neighbourhood of $q$ in finite time (this is the hard part), and that the generator of a suitably transformed process is elliptic.

\begin{figure}
\subfigure[Learning in an irreducible network.]{%
\label{subfig:convirr}%
\includegraphics[width=0.48\columnwidth]{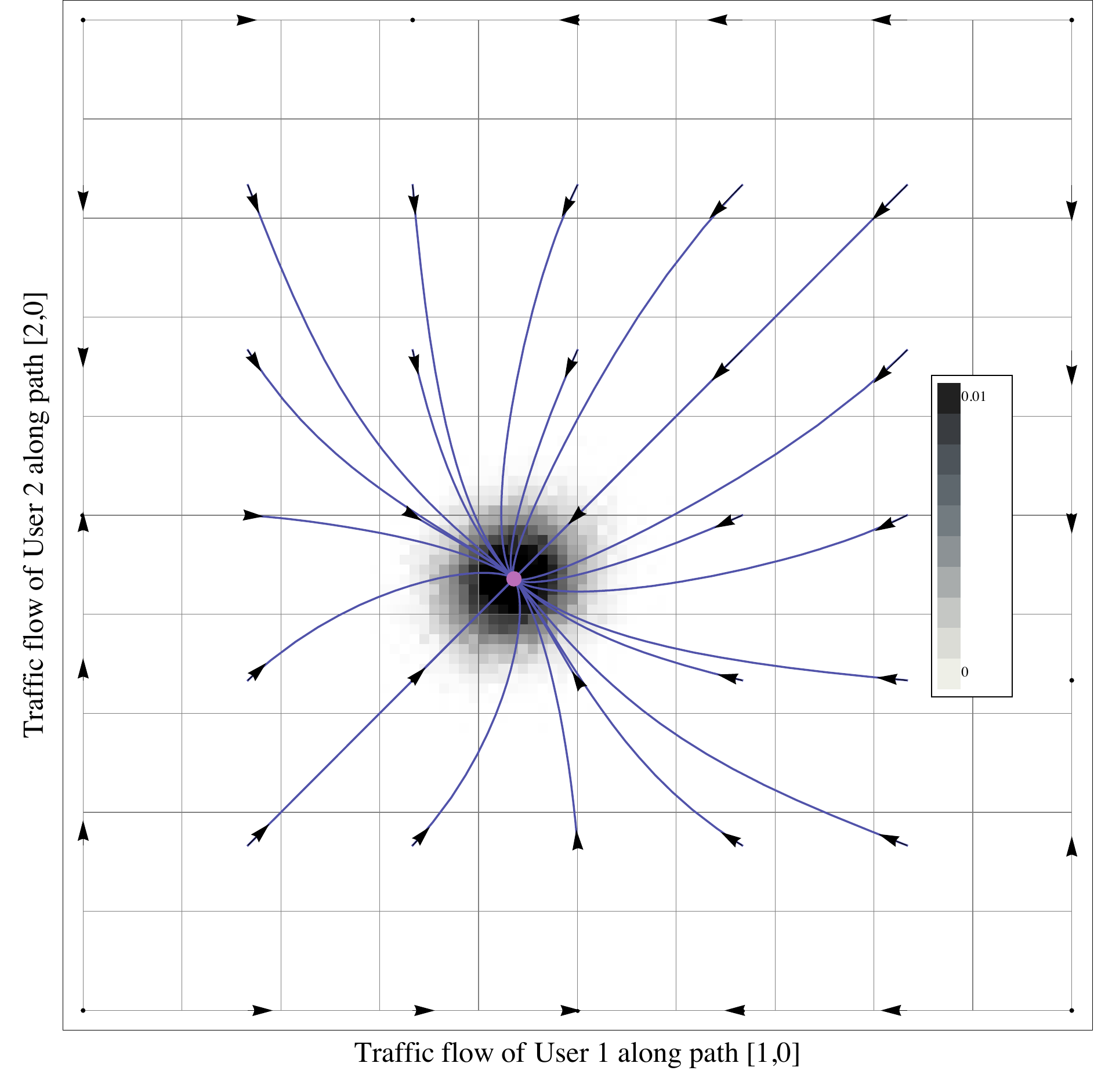}
}
\,
\subfigure[Learning in a reducible network.]{%
\label{subfig:convred}%
\includegraphics[width=0.48\columnwidth]{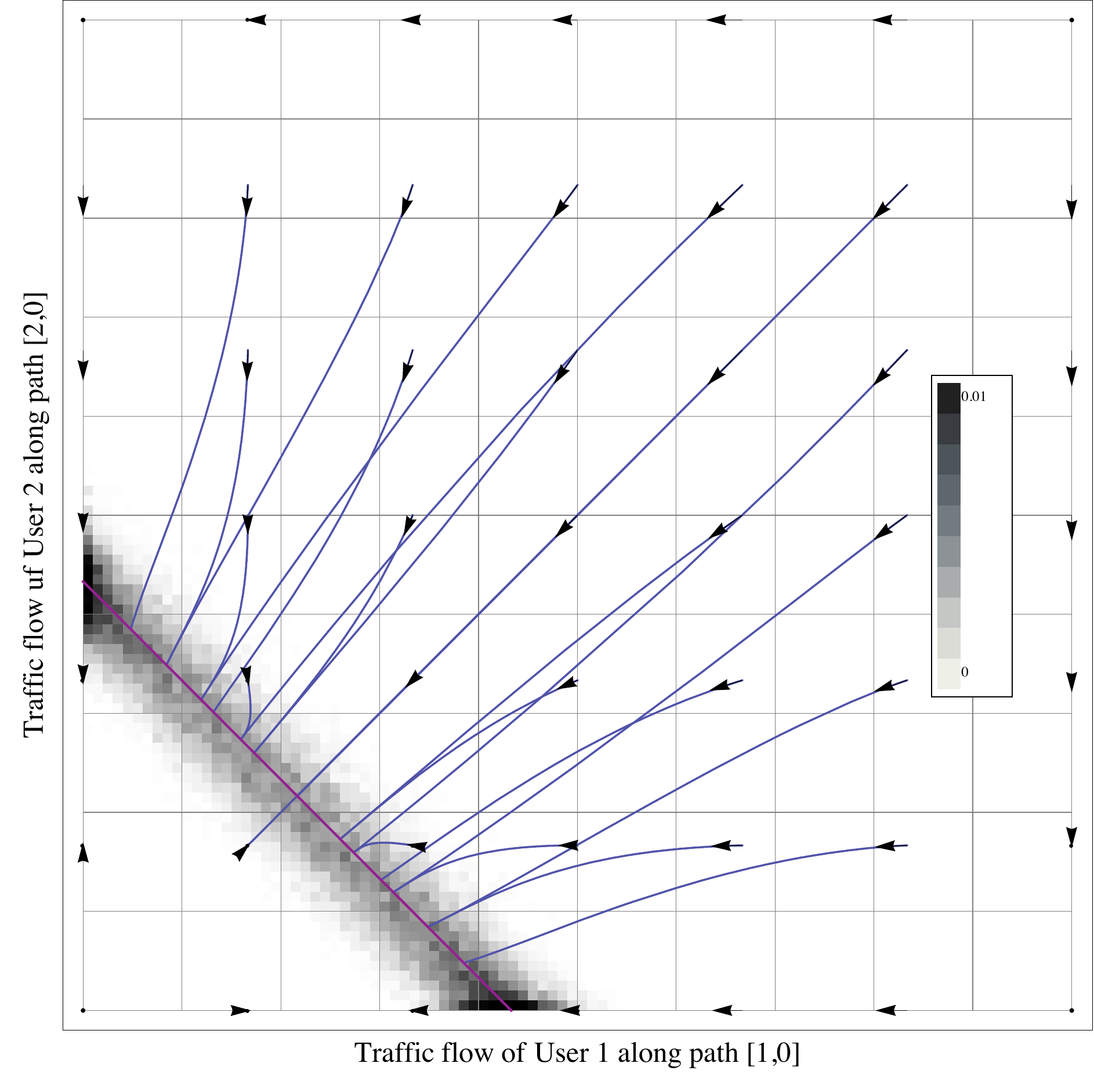}
}
\caption{Learning in the networks of Figure \ref{fig:network} with M/M/1 latency functions $\phi_{r}(y_{r})=(\mu_{r} - y_{r})^{-1}$ and arbitrarily chosen capacities $\mu_{r}$. The shades of gray represent the invariant distribution of (\ref{eq:SLRD}) (obtained by numerical simulations) and the flow lines (blue) are the solution trajectories of (\ref{eq:LRD}) -- in (b) they are actually projections because there is a third user as well.}
\label{fig:convergence}
\end{figure}

\begin{proof}[Proof of Theorem \ref{thm:recurrence}]

As we mentioned before, any $x\in\strat$ may be (uniquely) written in the ``projective'' form $x=q+\theta z$, where $\theta=\Theta_{q}(x)\in[0,1]$ is the projective distance of $x$ from $q$, and $z$ is a point in the ``sphere'' $S_{q}=\{z'\in T_{q}\strat:q+z'\in\bd(\strat)\}$. In this manner, (\ref{eq:stochentropy}) becomes:
\begin{flalign}
\label{eq:interiorLHq}
-\gen H_{q}(x;\lambda)=
L_{q} (q+\theta z)
&-\frac{1}{2}\insum_{i}\frac{\lambda_{i}}{\rho_{i}} \theta^{2}
\insum^{i}_{\beta,\gamma} \sigma_{\beta\gamma}^{2} z_{i\beta} z_{i\gamma}\\
&-\frac{1}{2}\insum_{i}\frac{\lambda_{i}}{\rho_{i}}
\insum^{i}_{\beta,\gamma} \sigma_{\beta\gamma}^{2} q_{i\beta} (\rho_{i}\delta_{\beta\gamma} - q_{i\gamma})\notag.
\end{flalign}

With regards to the first term of (\ref{eq:interiorLHq}), Lemma \ref{lem:interioradjoint} in Appendix \ref{apx:adjoint} and the definition (\ref{eq:essence}) of $\kappa$ yield $L_{q}(q+\theta z)\geq\frac{1}{2}m\|P(z)\|^{2}\theta^{2}\geq \frac{1}{2}m\kappa^{2}\rho^{2}\theta^{2}$. Moreover, we have already seen in the proof of Theorem \ref{thm:timestrict} that the second term of (\ref{eq:interiorLHq}) is bounded above:
\begin{equation}
\frac{1}{2}\sum_{i}\frac{\lambda_{i}}{\rho_{i}}\theta^{2}\insum^{i}_{\beta,\gamma} \sigma_{\beta\gamma}^{2}z_{i\beta} z_{i\gamma}\leq\frac{1}{2}\rho\lambda\sigma^{2}\theta^{2}.
\end{equation}

We are thus left to estimate the last term of (\ref{eq:interiorLHq}). To that end, (\ref{eq:posinterior}) gives:
\begin{flalign}
\insum^{i}_{\beta,\gamma}q_{i\beta} \left(\rho_{i}\delta_{\beta\gamma} - q_{i\gamma}\right)\sigma_{\beta\gamma}^{2}
= \insum_{r}\sigma_{r}^{2} y_{ir}(\rho_{i} - y_{ir})
\leq\frac{1}{4}\rho_{i}^{2}\sigma^{2}
\end{flalign}
where the last inequality stems from the bound $y_{ir}(\rho_{i}-y_{ir})\leq\frac{1}{4}\rho_{i}^{2}$ (recall that $0\leq y_{ir}\leq\rho_{i}$). Combining all of the above, we then get:
\begin{equation}
\label{eq:estimateLHq}
-\gen H_{q}(x;\lambda)
\geq \frac{1}{2}m\kappa^{2}\rho^{2}\theta^{2}
-\frac{1}{2}\rho\lambda\sigma^{2}\theta^{2}
-\frac{1}{8}\rho\lambda\sigma^{2}
\equiv g(\theta),
\quad 0\leq \theta\leq 1.
\end{equation}
As a result, if $\lambda<\frac{4}{5}\lambda_{0}$ where $\lambda_{0}=\frac{m\rho\kappa^{2}}{\sigma^{2}}$, it is easy to see that the RHS of (\ref{eq:estimateLHq}) will be increasing for $0\leq \theta\leq 1$. Moreover, it will also be positive for $\theta_{\lambda}<\theta\leq 1$, where $\theta_{\lambda}$ is the positive root of $g$: $\theta_{\lambda}=\frac{1}{2}(\lambda_{0}/\lambda-1)^{-1/2}$.

So, pick some positive $a<g(1)=\frac{1}{2}\rho\sigma^{2}\left(\lambda_{0}-\frac{5}{4}\lambda\right)$ and consider the set $K_{a}=\{q+\theta z:z\in S_{q}, g(\theta)\leq a\}$. By construction, $K_{a}$ is a compact neighbourhood of $q$ which does not intersect $\bd(\strat)$ and, by (\ref{eq:estimateLHq}), we have $\gen H_{q}(x;\lambda)\leq - a$ outside $K_{a}$. Therefore, if $\tau_{a}\equiv \tau_{K_{a}}$ denotes the hitting time $\tau_{a}=\inf\{t:X(t)\in K_{a}\}$, Theorem 5.3 in \citet[p.~268]{Du96} yields:
\begin{equation}
\label{eq:interiortime}
\ex_{x}[\tau_{a}] \leq \frac{H_{q}(x;\lambda)}{a} < \infty
\end{equation}
for every interior initial condition $X(0)=x\in\Int(\strat)$.

Inspired by a trick of \cite{Im05}, let us now consider the transformed process $Y(t)=\Psi(X(t))$ given by $\Psi_{i\mu}(x) = \log x_{i\mu}/x_{i,0}$, $\mu\in\act_{i}^{*}\equiv\act_{i}\exclude\{\alpha_{i,0}\}$. With $\frac{\pd \Psi_{i\mu}}{\pd x_{i\mu}} = 1/x_{i\mu}$ and $\frac{\pd \Psi_{i\mu}}{\pd x_{i,0}} = -1/x_{i,0}$, It\^o's formula gives:
\begin{equation}
\label{eq:dY}
dY_{i\mu}
=\gen \Psi_{i\mu}(X) \dd t
+\dd U_{i\mu} - \dd U_{i,0}
=\gen\Psi_{i\mu}(X)\dd t +  \insum_{r} Q^{i}_{r\mu} \sigma_{r} \dd W_{r},
\end{equation}
where $Q^{i}_{r\mu} = P^{i}_{r\mu} - P^{i}_{r,0}$ are the components of the redundancy matrix $Q$ of $\net$ in the basis $\tilde e_{i\mu} = e_{i\mu} - e_{i,0}$ of $T_{q}\strat$ -- recall also (\ref{eq:Qcomponents}) and the relevant discussion in Section \ref{subsec:flows}. 

We now claim that the generator of $Y$ is elliptic. Indeed, if we drop the user index $i$ for convenience and set $A_{\mu r} = Q_{r\mu} \sigma_{r}$, $\mu\in\coprod_{i}\act_{i}^{*}$, it suffices to show that the matrix $AA^{T}$ is positive-definite. Sure enough, for any tangent vector $z=\sum_{\mu} z_{\mu} \tilde e_{\mu}\in T_{q}\strat$, we get:
\begin{equation}
\label{eq:eigenvalue}
\langle Az, Az\rangle
= \insum_{\mu,\nu} \left(AA^{T}\right)_{\mu\nu} z_{\mu} z_{\nu}
=\insum_{\mu,\nu}\insum_{r} Q_{r\mu} Q_{r\nu} \sigma_{r}^{2} z_{\mu} z_{\nu}
=\insum_{r} \sigma_{r}^{2}w_{r}^{2},
\end{equation}
where $w=Q(z)$. Since $\net$ is irreducible, we will have $w\neq 0$, and in view of (\ref{eq:eigenvalue}) above, this proves our assertion.

We have thus shown that the process $Y(t)$ hits a compact neighbourhood of $\Psi(q)$ in finite time (on average), and also that the generator of $Y$ is elliptic. From the criteria of \citet[Lemma~3.4]{Bh78} it follows that $Y$ is recurrent, and since $\Psi$ is invertible in $\Int(\strat)$, the same must hold for $X(t)$ as well. In a similar fashion, these criteria also ensure that the transition probabilities of the diffusion $X(t)$ converge in total variation to an invariant probability measure $\pi$ on $\strat$, thus proving the first part of our theorem.

To obtain the estimate (\ref{eq:time-average}), note that Dynkin's formula \cite[see e.g.][Theorem~7.4.1]{Ok07} applied to (\ref{eq:estimateLHq}) yields:
\begin{flalign}
\ex_{x}\left[H_{q}(X(t);\lambda)\right]
&= H_{q}(x;\lambda)
+\ex_{x}\left[
\int_{0}^{t} \gen H_{q}(X(s);\lambda) \dd s
\right]\\
&\leq H_{q}(x;\lambda)
-\frac{1}{2} \rho\sigma^{2} (\lambda_{0} - \lambda) \ex_{x}\left[
\int_{0}^{t} \Theta_{q}^{2}(X(s)) \dd s
\right]
+\frac{1}{8} \rho\lambda\sigma^{2} t,\notag
\end{flalign}
and with $\ex_{x}[H_{q}(X(t);\lambda)]\geq 0$, we easily get:
\begin{equation}
\label{eq:time-average2}
\ex_{x}\left[
\frac{1}{t}\int_{0}^{t}
\Theta_{q}^{2}(X(s))\dd s
\right]
\leq \theta_{\lambda}^{2} + \frac{C}{t},
\text{ where }
C = \frac{2}{\rho\sigma^{2}}\frac{H_{q}(x;\lambda)}{\lambda_{0}-\lambda}.
\end{equation}

We are thus left to establish the bound $\pi(B_{\theta}) \geq 1 - \theta_{\lambda}^{2}/\theta^{2}$ which shows that the invariant measure $\pi$ concentrates its mass around the ``projective balls'' $B_{\theta}$. For that, we will use the ergodic property of $X(t)$, namely that:
\begin{equation}
\pi(B_{\theta})
= \lim_{t\to\infty}\ex_{x}\left[\frac{1}{t}\int_{0}^{t}\chi_{B_{\theta}}(X(s))\dd s\right],
\end{equation}
where $\chi_{B_{\theta}}$ is the indicator function of $B_{\theta}$. However, with $\Theta_{q}^{2}(x)/\theta^{2}\geq 1$ outside $B_{\theta}$ by definition, it easily follows that:
\begin{equation}
\ex_{x}\left[
\frac{1}{t}\int_{0}^{t} \chi_{B_{\theta}}(X(s))\dd s
\right]
\geq \ex_{x}\left[
\frac{1}{t}\int_{0}^{t}\left(1-\Theta_{q}^{2}(X(s))\Big/\theta^{2}\right) \dd s
\right]
\end{equation}
and the bound (\ref{eq:measure}) follows by letting $t\to\infty$ in (\ref{eq:time-average2}).
\end{proof}

We conclude this section with a few remarks on our results so far:

\begin{remark}[{\em Learning vs. Noise}]
The nature of our bounds reveals a most interesting feature of the replicator equation (\ref{eq:SLRD}). On the one hand, as $\lambda\to 0$, we also get $\theta_{\lambda}\to 0$ and the invariant measure $\pi$ converges vaguely to a point mass at $q$. Hence, if the learning rate $\lambda$ is slow enough (or if the noise $\sigma$ is low enough), we recover Theorem \ref{thm:detconvergence} (as we should!). On the other hand, there is a clear downside to using very slow learning rates: the expected time to hit a neighbourhood of an equilibrium is inversely proportional to $\lambda$. As a result, choosing learning rates is a delicate process and users will have to balance the rate versus the desired sharpness of their convergence.
\end{remark}

\begin{remark}[{\em The Effect of Redundancy}]
The irreducibility assumption is actually quite important: it appears both in the ``slow-learning'' condition (\ref{eq:slowlearning2}) (recall that $\ess(q)=0$ if $q$ is an interior point of a reducible network) and also in the proof that the generator of $Y(t) = \Psi(X(t))$ is elliptic. This shows that the stochastic dynamics (\ref{eq:SLRD}) are not obvlivious to redundant degrees of freedom, in stark contrast with the deterministic case (Theorem \ref{thm:detconvergence}).

Regardless, we expect that an analogue for Theorem \ref{thm:recurrence} still holds for reducible networks if we replace $q$ with the entire (affine) set $\eq$. More precisely, we conjecture that under a suitably modified learning condition, the transition probabilities of $X(t)$ converge to an invariant distribution which concentrates mass around $\eq$ (see Fig.~\ref{subfig:convred}). One way to prove this claim would be to find a suitable way to ``quotient out'' $\ker Q$ but, since the replicator equation (\ref{eq:SLRD}) is not invariant over the redundant fibres $x+\ker Q, x\in\strat$, we have not yet been able to do so.
\end{remark}

\begin{remark}[{\em Sharpness}]
We should also note here that the bounds we obtained are not the sharpest possible ones. For example, the learning condition (\ref{eq:slowlearning2}) can be tightened and the assumption that $\phi_{r}'>0$ can actually be dropped. In that case however, the corresponding expressions would become significantly more complicated without adding adding much essence, so we have opted to keep our analysis focused on the simpler estimates.
\end{remark}

%\begin{remark}
%It is important to note that condition (\ref{eq:slowlearning2}) is essentially controlled by the ``rate-to-noise'' ratio $\rho/\sigma^{2}$. As such, this quantity begs to be compared with the \emph{signal-to-noise ratio} (SNR) which is used in electrical engineering to measure signal quality. Indeed, we see that in the ``large RNR'' region, we recover the deterministic case and users can learn their way to equilibrium as fast as they wish.
%\end{remark}

%%% DISCUSSION
%----------------------------------------------------------------------

\section{Discussion}
\label{sec:discussion}

In this last section, we will discuss some issues that have not been thoroughly addressed in the rest of the paper and provide some directions for future work.

\paragraph{Learning and Optimality}

We have already noted that the traffic flows which minimise the aggregate latency $\omega(x) = \sum_{i}\rho_{i}\omega_{i}(x)$ in a network correspond precisely to the Wardrop equilibria of a congestion model which is defined over the same network and whose delay functions are given by the ``marginal latencies'' $\phi_{r}^{*}(y_{r}) = \phi_{r}(y_{r}) + y_{r}\phi_{r}'(y_{r})$ \citep[see e.g.][]{RT02}. Hence, if we set $\omega_{i\alpha}^{*}(x) = \sum^{i}_{\alpha}P^{i}_{r\alpha}\phi_{r}^{*}(y_{r})$ and substitute $\omega_{i\alpha}^{*}$ instead of $\omega_{i\alpha}$ in the replicator dynamics (\ref{eq:LRD}) and (\ref{eq:SLRD}), our analysis yields:

\begin{theorem}
Let $\model\equiv\model(\net,\phi)$ be a congestion model with strictly convex latency functions $\phi_{r}$, $r\in\edges$, and assume that users follow a replicator learning scheme with cost functions $\omega_{i\alpha}^{*}$. Then:
\begin{enumerate}
\item In the deterministic case (\ref{eq:LRD}), players converge to a traffic flow which minimises the aggregate delay $\omega(x) = \sum \rho_{i}\omega_{i}(x)$.
\item In the stochastic case (\ref{eq:SLRD}), if the network is irreducible and the players' learning rates are slow enough, their time-averaged flows will be concentrated near the (necessarily unique) optimal distribution $q$ which minimises $\omega$.
\end{enumerate}
\end{theorem}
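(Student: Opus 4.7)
The plan is to observe that the replacement $\omega_{i\alpha}\leftrightarrow\omega_{i\alpha}^{*}$ is nothing more than passing to a new congestion model $\model^{*}\equiv\model(\net,\phi^{*})$ on the \emph{same} network $\net$, but with latency functions $\phi_{r}^{*}(y_{r})=\phi_{r}(y_{r})+y_{r}\phi_{r}'(y_{r})$. This has been arranged precisely so that the Rosenthal potential of $\model^{*}$ coincides (up to the map $P$) with the aggregate latency of $\model$. Indeed, integration by parts immediately yields
\begin{equation}
\Phi^{*}(y)=\insum_{r}\int_{0}^{y_{r}}\negspace\phi_{r}^{*}(w)\dd w=\insum_{r}\int_{0}^{y_{r}}\negspace\frac{d}{dw}\bigl[w\phi_{r}(w)\bigr]\dd w=\insum_{r}y_{r}\phi_{r}(y_{r}),
\end{equation}
whereas a trivial rearrangement of (\ref{eq:avgdelay}) gives $\omega(x)=\sum_{i}\rho_{i}\omega_{i}(x)=\sum_{r}y_{r}\phi_{r}(y_{r})$, so that $\Phi^{*}(P(x))=\omega(x)$. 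Hence, the optimal flows of $\model$ are exactly the Wardrop equilibria of $\model^{*}$.

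The first preparatory step is therefore to check that $\model^{*}$ satisfies the standing hypotheses of Sections~\ref{sec:prelims}--\ref{sec:stochastic}: strict convexity of $\phi_{r}$ (together with $\phi_{r}'>0$) gives $\phi_{r}^{*\prime}=2\phi_{r}'+y_{r}\phi_{r}''>0$, so the $\phi_{r}^{*}$ are themselves strictly increasing and $C^{1}$, and hence the entire deterministic and stochastic machinery developed above applies to $\model^{*}$ verbatim. Moreover, since each summand $y_{r}\phi_{r}(y_{r})$ is strictly convex in $y_{r}$, the potential $\Phi^{*}$ is strictly convex over $P(\strat)$ and consequently possesses a unique minimiser $y^{*}\in P(\strat)$.

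With these observations in hand, the first assertion follows directly from Theorem~\ref{thm:detconvergence} applied to $\model^{*}$: any interior solution of the replicator dynamics driven by the marginal costs $\omega_{i\alpha}^{*}$ converges to a Wardrop equilibrium of $\model^{*}$, and any such equilibrium is a minimiser of $\omega$. The second assertion then follows from Theorem~\ref{thm:recurrence} applied to $\model^{*}$: when $\net$ is irreducible, $P$ is injective on $Z$ and so $P^{-1}(y^{*})\cap\strat$ is a singleton $\{q\}$, necessarily interior because strict equilibria of $\model^{*}$ would likewise have to be the unique Wardrop flows by Proposition~\ref{prop:uniquestrict}. The slow-learning condition (\ref{eq:slowlearning2}) must now be read with $m$ replaced by $m^{*}=\inf\{\phi_{r}^{*\prime}(y_{r}):r\in\edges,\,y\in P(\strat)\}>0$ and with $\kappa=\ess(q)$ computed at the (new) equilibrium; under this hypothesis, Theorem~\ref{thm:recurrence} delivers the invariant measure that concentrates mass around $q$, which is precisely the optimal distribution.

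The only real content beyond reapplying earlier theorems is the calculation $\Phi^{*}=\omega$; everything else is a matter of verifying that the transplanted latencies retain the regularity, monotonicity and (strict) convexity needed for the hypotheses of Theorems~\ref{thm:detconvergence} and~\ref{thm:recurrence}. The main subtlety to flag is that uniqueness of the \emph{optimal flow} (as opposed to uniqueness of the optimal load profile $y^{*}$) requires irreducibility of $\net$, which is why this hypothesis must reappear in the stochastic part of the statement.
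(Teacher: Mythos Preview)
Your proposal follows the same approach as the paper, which merely remarks that ``for a sharper statement one need only reformulate Theorems~\ref{thm:detconvergence}, \ref{thm:stability}, and~\ref{thm:recurrence} accordingly (the convexity of $\phi_{r}^{*}$ replaces the monotonicity requirement for $\phi_{r}$).'' Your write-up is in fact more detailed than the paper's own treatment: the calculation $\Phi^{*}(P(x))=\omega(x)$ and the verification that $\phi_{r}^{*\prime}>0$ under strict convexity of $\phi_{r}$ are exactly what is needed.

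One small wrinkle: your sentence claiming that $q$ is ``necessarily interior because strict equilibria of $\model^{*}$ would likewise have to be the unique Wardrop flows by Proposition~\ref{prop:uniquestrict}'' is not a valid deduction---uniqueness does not preclude strictness (or any other boundary location). The paper sidesteps this by citing \emph{both} Theorem~\ref{thm:stability} (and implicitly Corollary~\ref{cor:stoconvergence}) for the strict case and Theorem~\ref{thm:recurrence} for the interior case; either way the time-averaged flows concentrate near $q$. You should simply drop the interiority claim and invoke whichever of the two stochastic results applies, just as the paper does.
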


Of course, for a sharper statement one need only reformulate Theorems \ref{thm:detconvergence}, \ref{thm:stability}, and \ref{thm:recurrence} accordingly (the convexity of $\phi_{r}^{*}$ replaces the monotonicity requirement for $\phi_{r}$). The only thing worthy of note here is that the marginal costs $\phi_{r}^{*}(y_{r})$ do not really constitute ``local information'' that users can acquire simply by routing their traffic and recording the delays that they experience. However, the missing components $y_{r}\phi_{r}'(y_{r})$ can easily be measured by observers monitoring the edges of the network and could be subsequently publicised to all users that employ the edge $r\in\edges$. Consequently, if the adminstrators of a network wish users to figure out the optimal traffic allocation on their own, they simply have to go the (small) extra distance of providing such monitors on the network's links.

\paragraph{Equilibrium Classes}
In a certain sense, interior and strict equilibria represent the extreme ends of the Wardrop spectrum, so it was a reasonable choice to focus our analysis on them. Nevertheless, there are equilibrium classes that we did not consider: for instance, there are pure Wardrop equilibria which are not strict, or there could be ``quasi-strict'' equilibria $q$ in the boundary of $\strat$ with the property that $\omega_{i\alpha}(q)>\omega_{i}(q)$ for all $\alpha$ which are not present in $q$.

Strictly speaking, such equilibria are not covered by either Theorem \ref{thm:stability} or Theorem \ref{thm:recurrence}. Still, by a suitable modification of our stochastic calculations, we may obtain similar convergence and stability results for these types of equilibria as well. For example, modulo a ``slow-learning'' condition similar to (\ref{eq:slowlearning2}), it is easy to see that pure equilibria that are not strict are still stochastically stable. The reason we have opted not to consider all these special cases is that it would be too much trouble for little gain: the assortment of similar-looking results that we would obtain in this way would confuse things more than it would clarify them.

\paragraph{Exponential Learning}
In the context of $N$-person Nash games, we have already mentioned that the replicator dynamics also arise as the result of an ``exponential learning'' process, itself a variant of logistic fictitious play \citep{FL98,MM09}. The way this scheme works is that players keep cumulative scores of their strategies' performance and they employ each strategy with a probability which is exponentially proportional to these scores. As such, it is not too hard to adapt this method directly to our congestion setting.

In more detail, assume that all users $i\in\play$ keep performance scores $V_{i\alpha}$ of the paths at their disposal as specified by the differential equation:
\begin{equation}
\label{eq:detscore}
dV_{i\alpha}(t) = -\omega_{i\alpha}(x(t)) \dd t,
\end{equation}
where $x(t)$ is the traffic profile at time $t$. Based on these scores, the users then update their traffic flows according to the Boltzmann distribution:
\begin{equation}
\label{eq:Boltzmann}
x_{i\alpha}(t) = \frac{e^{\lambda_{i}V_{i\alpha}(t)}}{\sum^{i}_{\beta}e^{\lambda_{i}V_{i\beta}(t)}},
\end{equation}
where $\lambda_{i}$ denotes the learning rate of player $i\in\play$ (this expression also explains why these rates can be seen as inverse temperatures). In this way, by decoupling these expressions, one obtains the deterministic replicator equation (\ref{eq:LRD}).

We thus see that exponential learning tells us nothing new in deterministic environments. In the presence of noise however, the scores $V_{i\alpha}$ also reflect any fluctuations in the observed delays, so we obtain instead:
\begin{equation}
\label{eq:stoscore}
dV_{i\alpha}(t) = -\omega_{i\alpha}(X) \dd t + \sigma_{i\alpha}\dd U_{i\alpha},
\end{equation}
where, as in (\ref{eq:U}), $\dd U_{i\alpha}$ describes the total noise along the path $\alpha\in\act_{i}$. Therefore, if the users' flow profile $X(t)$ is updated according to (\ref{eq:Boltzmann}), It\^o's lemma now gives:
\begin{flalign}
\label{eq:CXRD}
d X_{i\alpha}
&= \lambda_{i} X_{i\alpha}\left[\omega_{i}(X) - \omega_{i\alpha}(X)\right]\dd t
+ \lambda_{i} X_{i\alpha}\left[d U_{i\alpha} - \rho_{i}^{-1}\insum^{i}_{\beta} X_{i\beta} \dd U_{i\beta}\right]\notag\\
+&\frac{\lambda_{i}^{2}}{2} X_{i\alpha}\left[
\insum^{i}_{\beta}(\rho_{i}\delta_{\alpha\beta} - 2 X_{i\beta})\sigma_{\alpha\beta}^{2}
- \rho_{i}^{-1}\insum^{i}_{\beta,\gamma} \sigma_{\beta\gamma}^{2} X_{i\gamma}(\rho_{i}\delta_{\beta\gamma} - 2 X_{i\beta})
\right]dt.
\end{flalign}

As far as the rationality properties of these new dynamics are concerned, a simple modification in the proof of Theorem \ref{thm:stability} suffices to show that strict Wardrop equilibria are stochastically stable in (\ref{eq:CXRD}). Just the same, the extra drift term in (\ref{eq:CXRD}) complicates things considerably, so results containing explicit estimates of hitting times are significantly harder to obtain. Of course, this approach might well lead to improved convergence rates, but since the calculations would take us too far afield, we prefer to postpone this analysis for the future.

\paragraph{The Brown-von~Neumann-Nash Dynamics}
Another powerful learning scheme is given by the Brown-von~Neumann-Nash (BNN) dynamics \citep[see e.g.][]{FL98} where users look at the ``excess delays''
\begin{equation}
\label{eq:excess}
\psi_{i\alpha}(x)
= \left[\omega_{i}(x) - \omega_{i\alpha}(x)\right]^{+}
=\max\left\{\omega_{i}(x) - \omega_{i\alpha}(x),0\right\}
\end{equation}
and update their traffic flows according to the differential equation:
\begin{equation}
\label{eq:BNN}
\frac{d x_{i\alpha}}{dt} = \psi_{i\alpha}(x(t)) - \psi_{i}(x(t)),
\end{equation}
where $\psi_{i}(x)=\rho_{i}^{-1}\sum^{i}_{\alpha} x_{i\alpha}\psi_{i\alpha}(x)$. On the negative side, these dynamics require users to monitor delays even along paths that they do not employ. On the other hand, they satisfy the pleasant property of ``non-complacency'' \citep{Sa01}: the stationary states of (\ref{eq:BNN}) coincide with the game's Wardrop equilibria and every solution trajectory converges to a connected set of such equilibria.

In terms of convergence to a Wardrop equilibrium, Theorem \ref{thm:detconvergence} shows that the replicator dynamics behave at least as well as the BNN dynamics (except perhaps on the boundary of $\strat$), so there is no real reason to pick the more complicated expressions (\ref{eq:excess}), (\ref{eq:BNN}). However, this might not be true in the presence of stochastic fluctuations: in fact, virtually nothing is known about the behaviour of the BNN dynamics in stochastic environments so this question alone makes pursuing this direction a worthwhile project.

%----------------------------------------------------------------------
%%% ACKNWOLEDGEMENTS
%----------------------------------------------------------------------

\section*{Acknowledgements}

We would like to extend our gratitude to M.~Scarsini for a series of fruitful discussions on Braess's paradox which helped us clarify the differences between the equilibrial conditions that arise in congestion models.

%----------------------------------------------------------------------
%%% APPENDICES
%----------------------------------------------------------------------

\appendix

\section{Properties of the Adjoint Potential}
\label{apx:adjoint}

We collect here some of the most useful properties of the \emph{adjoint potential}:
\begin{equation}
\tag{\ref{eq:adjoint}}
L_{q}(x)
\equiv \insum_{i,\alpha}(x_{i\alpha} - q_{i\alpha}) \omega_{i\alpha}(x)
= \insum_{r} (y_{r}-y^{*}_{r}) \phi_{r}(y_{r})
\equiv \Lambda (y).
\end{equation}
To begin with, the equality in (\ref{eq:adjoint}) stems from the invariance identity:
\begin{equation}
\label{eq:invariance}
\insum_{\alpha} z_{\alpha}\omega_{\alpha}
= \insum_{\alpha}\insum_{r} z_{\alpha} P_{r\alpha} \phi_{r}
= \insum_{r} w_{r} \phi_{r},
\quad z\in V
\end{equation}
where $P:V\to W$ is the indicator matrix of the network $\net$ and $w=P(z)$. It is then easy to verify that $L_{q}(x) = L_{q}(x')$ whenever $x'-x\in\ker Q$ (thus justifying (\ref{eq:adjoint})), and also that $L_{q}=L_{q'}$ iff $q'-q\in\ker Q$. As a result, the notation $L_{q}(x)\equiv\Lambda(y)$ is consistent with any choice of $q\in\eq$.

This ``adjoint'' potential owes its name to the formula for integration by parts:
\begin{equation}
\insum_{r}\int_{y_{r}^{*}}^{y_{r}} \phi_{r}(w) \dd w
= \insum_{r}(y_{r}-y_{r}^{*})\phi_{r}(y_{r})
-\insum_{r}\int_{y_{r}^{*}}^{y_{r}} w \phi_{r}'(w) \dd w.
\end{equation}
%We may now prove the namesake property of the adjoint potential $\Lambda$:
%\begin{equation}
%\label{eq:conbound}
%\Lambda(y)
%\geq \Phi(y) - \Phi(y^{*})
%=\insum_{r}\int_{y_{r}^{*}}^{y_{r}} \phi_{r}(y_{r}')\dd y_{r}',
%\end{equation}
%where $\Phi(y) = \sum_{r}\int_{0}^{y_{r}}\phi_{r}(y_{r}')\dd y_{r}'$ is the Rosenthal potential of the game. Indeed, since the $\phi_{r}$ are positive and increasing, we get: $(y_{r}-y_{r}^{*})\phi_{r}(y_{r})\geq\int_{y_{r}^{*}}^{y_{r}} \phi_{r}(y_{r}')\dd y_{r}' = \Phi_{r}(y_{r}) - \Phi_{r}(y_{r}^{*})$, and (\ref{eq:conbound}) follows by an addition over $r$. However, since $y^{*}=P(q)$ is the global minimum of $\Phi$, $\Lambda(y)$  will be positive definite and we obtain a (much shorter!) proof of Lemma \ref{lem:entropy}.
Since the latencies $\phi_{r}$ are increasing, this expression immediately yields the estimate (\ref{eq:adjoint}): $\Phi(y)- \Phi(y^{*}) \leq \Lambda(y)$. However, if we also assume that $q$ is strict (say $q=\sum_{i} \rho_{i} e_{i,0}$ for convenience), we can get a more direct bound:
\begin{lemma}
\label{lem:strictadjoint}
Let $q=\sum_{i}\rho_{i} e_{i,0}$ be a strict Wardrop equilibrium and let $z\in T_{q}\strat$. Then, for all $t\geq0$ such that $x(t)= q+tz\in\strat$, we have:
\begin{equation}
\label{eq:strictadjoint}
L_{q}(q+tz)\geq \frac{1}{2} \insum_{i}  \Delta\omega_{i} \|z_{i}\|_{1} t,\quad
\text{where $\Delta\omega_{i} = \txs\min_{\mu\neq0} \{\omega_{i\mu}(q)-\omega_{i,0}(q)\}$ .}
\end{equation}
\end{lemma}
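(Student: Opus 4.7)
The plan is to bound $L_{q}$ below by the Rosenthal potential gap using (\ref{eq:potbound}), then linearise the potential gap via convexity, and finally exploit the vertex structure of $q$ together with the strict-equilibrium gap $\Delta\omega_{i}$.

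First, write $w=P(z)$ and $y^{*}=P(q)$, so that $P(q+tz)=y^{*}+tw$. Applying (\ref{eq:potbound}) at the load profile $y^{*}+tw\in P(\strat)$ yields
\begin{equation}
L_{q}(q+tz)\;\geq\;\Phi(y^{*}+tw)-\Phi(y^{*}).
\end{equation}
Because each $\phi_{r}$ is $C^{1}$ and increasing, $\Phi_{r}(y_{r})=\int_{0}^{y_{r}}\phi_{r}$ is $C^{2}$ and convex with $\partial\Phi/\partial y_{r}=\phi_{r}(y_{r})$, so the supporting-hyperplane inequality gives
\begin{equation}
\Phi(y^{*}+tw)-\Phi(y^{*})\;\geq\;t\insum_{r}\phi_{r}(y_{r}^{*})\,w_{r}.
\end{equation}
The invariance identity (\ref{eq:invariance}), applied with delays evaluated at $y^{*}$, rewrites the right-hand side as $t\sum_{i,\alpha}z_{i\alpha}\,\omega_{i\alpha}(q)$.

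Next I would use that $q=\sum_{i}\rho_{i}e_{i,0}$ sits at a vertex of $\strat$: for $q+tz\in\strat$ with $t>0$ one necessarily has $z_{i,0}\leq 0$ and $z_{i\mu}\geq 0$ for every $\mu\in\act_{i}^{*}$, together with $z_{i,0}=-\sum_{\mu\neq 0}z_{i\mu}$. Substituting this into the inner product and collecting the $\omega_{i,0}(q)$ terms gives
\begin{equation}
\insum_{\alpha}^{i} z_{i\alpha}\omega_{i\alpha}(q)
\;=\; \insum_{\mu\neq 0}^{i} z_{i\mu}\bigl[\omega_{i\mu}(q)-\omega_{i,0}(q)\bigr].
\end{equation}
Since $q$ is a \emph{strict} Wardrop equilibrium, each bracket is bounded below by $\Delta\omega_{i}>0$, and the sign condition $z_{i\mu}\geq 0$ preserves the inequality. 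Finally, the same sign structure yields $\|z_{i}\|_{1}=|z_{i,0}|+\sum_{\mu\neq 0}z_{i\mu}=2\sum_{\mu\neq 0}z_{i\mu}$, so $\sum_{\mu\neq 0}z_{i\mu}=\tfrac{1}{2}\|z_{i}\|_{1}$, and chaining the inequalities gives exactly (\ref{eq:strictadjoint}).

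The only mildly delicate point is the second step: the first-order lower bound for $\Phi$ must be anchored at $y^{*}$ (not at the running point $y^{*}+tw$), so it is essential that $\Phi$ be globally convex on $P(\strat)$, which is guaranteed by $\phi_{r}'>0$. Everything else is bookkeeping around the vertex $q$ and the strict-gap assumption.
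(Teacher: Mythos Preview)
Your proof is correct and follows essentially the same route as the paper's: both bound $L_{q}$ below by the potential gap via (\ref{eq:potbound}), linearise $\Phi$ at $y^{*}$ by convexity to obtain $t\sum_{r}\phi_{r}(y_{r}^{*})w_{r}$, convert this to $t\sum_{i,\alpha}z_{i\alpha}\omega_{i\alpha}(q)$ via (\ref{eq:invariance}), and then exploit the sign structure $z_{i\mu}\geq 0$ forced by the vertex $q$ together with the strict gap $\Delta\omega_{i}$ and the identity $\|z_{i}\|_{1}=2\sum_{\mu\neq 0}z_{i\mu}$. The only cosmetic difference is that the paper packages the convexity step as $f(t)\geq f(0)+f'(0)t$ with $f(t)=\Phi(y^{*}+tw)$, whereas you phrase it as a supporting-hyperplane inequality; and your closing remark about needing $\phi_{r}'>0$ is slightly stronger than necessary (mere monotonicity of $\phi_{r}$ already makes $\Phi$ convex).
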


\begin{proof}
Clearly, to have $q+tz\in\strat$ for some $t>0$, $z$ must be of the form:
\begin{equation}
\label{eq:possiblez}
z=\insum_{i}z_{i}
\text{ with }
z_{i} = \insum^{i}_{\mu} z_{i\mu}(e_{i\mu} - e_{i,0})
\text{ and }
z_{i\mu}\geq0
\text{ for all }\mu\in\act_{i}^{*}\equiv\act_{i}\exclude\{0\}.
\end{equation}
So, let $f(t) = \Phi(y(t))$ where $y(t) = P(x(t)) = y^{*}+tw$ and $w=P(z)$. With $\Phi$ convex, we get $f(t)\geq f(0) + f'(0)t$ and a simple differentiation yields: $f'(0) =\left.\frac{d}{dt}\right|_{t=0}\insum_{r} \Phi_{r}(y_{r}^{*}+tw_{r}) = \insum_{r} w_{r} \phi_{r}(y_{r}^{*})$. However, thanks to (\ref{eq:invariance}) and (\ref{eq:possiblez}), we may rewrite this sum as:
\begin{flalign}
\insum_{r} w_{r} \phi_{r}(y_{r}^{*})
&= \insum_{i}\insum^{i}_{\alpha} z_{i\alpha} \omega_{i\alpha}(q)
=\insum_{i}\insum^{i}_{\mu} z_{i\mu} \left[\omega_{i\mu}(q) - \omega_{i,0}(q)\right]\\
&\geq\insum_{i}\insum^{i}_{\mu} z_{i\mu} \Delta\omega_{i}
=\frac{1}{2} \insum_{i} \Delta\omega_{i} \|z_{i}\|_{1},\notag
\end{flalign}
because $\|z_{i}\|_{1} = \sum^{i}_{\alpha}|z_{i\alpha}| = \left|-\sum^{i}_{\mu} z_{i\mu}\right| + \sum^{i}_{\mu} |z_{i\mu}| = 2 \sum^{i}_{\mu} z_{i\mu}$. We thus obtain:
\begin{equation}
L_{q}(q+tz)
=\Lambda(y^{*}+tw)
\geq f(t)-f(0)
\geq \frac{1}{2} \insum_{i}  \Delta\omega_{i} \|z_{i}\|_{1} t.
\qedhere
\end{equation}
\end{proof}

This lemma shows that $L_{q}$ increases at least linearly along all ``inward'' rays $q+tz$.\footnote{It is interesting to note here the relation with Proposition \ref{prop:uniquestrict}: if the ray $q+tz$ is inward-pointing, then $z$ cannot be ``redundant'', i.e. we cannot have $z\in\ker P$.} This is not so if $q$ is an \emph{interior} equilibrium: 

\begin{lemma}
\label{lem:interioradjoint}
Let $q\in\Int(\strat)$ be an interior Wardrop equilibrium and let $z\in T_{q}\strat$. Then, for all $t\geq0$ such that $x(t)=q+tz\in\strat$, we have:
\begin{equation}
\label{eq:interioradjoint}
\txs
L_{q}(q+tz)\geq \frac{1}{2} m\|P(z)\|^{2} t^{2},\quad
\text{where $m=\inf\{\phi_{r}'(y_{r}):r\in\edges, y\in P(\strat)\}$.}
\end{equation}
\end{lemma}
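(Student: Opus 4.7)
The plan is to mimic the proof of Lemma \ref{lem:strictadjoint}, exploiting the bound $\Lambda(y)\geq\Phi(y)-\Phi(y^{*})$ from (\ref{eq:potbound}) to replace $L_{q}$ by the Rosenthal potential $\Phi$, and then Taylor-expand $\Phi$ along the ray $y^{*}+tw$ with $w=P(z)$. The key difference from the strict case is that the first-order term will now vanish (since $q$ lies in the interior), so the lower bound will come entirely from the second derivative, which is precisely where the quantity $m=\inf\phi_{r}'$ enters. This explains the $t^{2}$ scaling (as opposed to the linear scaling of Lemma \ref{lem:strictadjoint}).

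First, I would set $f(t)=\Phi(y^{*}+tw)$ and compute $f'(0)=\sum_{r}w_{r}\phi_{r}(y_{r}^{*})$. Using the invariance identity~(\ref{eq:invariance}) with the indicator map $P$, this rewrites as $f'(0)=\sum_{i,\alpha}z_{i\alpha}\,\omega_{i\alpha}(q)$. Since $q\in\Int(\strat)$, the Wardrop characterisation (\ref{eq:WEQ2}) forces $\omega_{i\alpha}(q)=\omega_{i}(q)$ for every $\alpha\in\act_{i}$; and since $z\in T_{q}\strat$ satisfies $\sum_{\alpha}^{i}z_{i\alpha}=0$ for every user $i$, the inner sum collapses:
\begin{equation}
f'(0)=\insum_{i}\omega_{i}(q)\insum^{i}_{\alpha}z_{i\alpha}=0.
\end{equation}
This is the crucial place where interiority of $q$ is used.

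Next, differentiating once more yields $f''(u)=\sum_{r}w_{r}^{2}\phi_{r}'(y_{r}^{*}+uw_{r})\geq m\|w\|^{2}$ for every $u$ with $y^{*}+uw\in P(\strat)$, by definition of $m$. Integrating twice and using $f'(0)=0$, we get
\begin{equation}
f(t)-f(0)=\int_{0}^{t}\!\!\int_{0}^{s}f''(u)\,du\,ds\geq\frac{1}{2}m\|w\|^{2}t^{2}.
\end{equation}
Finally, applying (\ref{eq:potbound}) with $y=y^{*}+tw$ gives $L_{q}(q+tz)=\Lambda(y^{*}+tw)\geq f(t)-f(0)\geq\frac{1}{2}m\|P(z)\|^{2}t^{2}$, which is exactly (\ref{eq:interioradjoint}).

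The main (mild) obstacle is verifying that $f'(0)$ vanishes, which requires recognising that in the interior of $\strat$ the Wardrop condition actually equalises all path latencies of each user, and combining this with the tangency condition $\sum_{\alpha}^{i}z_{i\alpha}=0$. Everything else is routine: the reduction to $\Phi$ via (\ref{eq:potbound}), the Taylor expansion, and the uniform lower bound on $\phi_{r}'$.
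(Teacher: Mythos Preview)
Your proposal is correct and follows essentially the same approach as the paper: reduce to $\Phi$ via the bound (\ref{eq:potbound}), show $f'(0)=0$ using interiority of $q$ together with the tangency condition $\sum_{\alpha}^{i}z_{i\alpha}=0$, and then bound $f''$ below by $m\|P(z)\|^{2}$ to obtain the quadratic estimate. The only cosmetic difference is that the paper phrases the final step as a Taylor expansion with Lagrange remainder rather than a double integral, which amounts to the same thing.
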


\begin{proof}
Following the proof of Lemma \ref{lem:strictadjoint} above, we obtain:
\begin{equation}
f'(0)
=\insum_{i}\insum^{i}_{\alpha} z_{i\alpha}\omega_{i\alpha}(q)
=\insum_{i}\insum^{i}_{\alpha} z_{i\alpha} \omega_{i}(q)
=0
\end{equation}
where the second equality follows from the fact that $q$ is an interior equilibrium (that is, $\omega_{i\alpha}(q) = \omega_{i}(q)$ for all paths $\alpha\in\act_{i}$), and the last one is a consequence of $z$ being tangent to $\strat$ (meaning that $\sum^{i}_{\alpha}z_{i\alpha}=0$). On the other hand, we also get:
\begin{equation}
f''(t)
=\frac{d^{2}}{dt^{2}}\insum_{r}\Phi_{r}(y_{r}^{*}+tw_{r})
= \insum_{r} w_{r}^{2} \phi_{r}'(y_{r}^{*} + tw_{r}).
\end{equation}
Clearly, since the set $P(\strat)$ of load profiles $y$ is compact and the (continuous) functions $\phi_{r}'$ are positive, we will also have $m=\inf\{\phi_{r}'(y_{r}):y\in P(\strat),r\in\edges\}>0$. We will thus have $f(t)\geq\frac{1}{2}mt^{2}$, and a first order Taylor expansion with Lagrange remainder easily yields:
\begin{equation}
\txs
L_{q}(q+tz)
=\Lambda(y^{*}+tw)
\geq f(t)-f(0)
\geq \frac{1}{2} m\|P(z)\|^{2}t^{2}.
\qedhere
\end{equation}
\end{proof}

\section{Stochastic Calculations}
\label{apx:stochastic}

This appendix is devoted to the calculations that are hidden under the hood of (\ref{eq:stochentropy}), the equation that describes the evolution of the relative entropy $H_{q}(x;\lambda)$.

\begin{proof}[Proof of Lemma \ref{lem:stochentropy}]
Let $V_{q}(t)= H_{q}(X(t);\lambda)$. We then have:
\begin{flalign}
\label{eq:dV1}
dV_{q}
&= \insum_{i,\alpha} \frac{\pd H_{q}}{\pd x_{i\alpha}} \dd X_{i\alpha}
+\frac{1}{2}\insum_{i,\alpha}\insum_{j,\beta} \frac{\pd^{2} H_{q}}{\pd x_{i\alpha} \pd x_{j\beta}} \left(dX_{i\alpha}\right)\ip\left(dX_{j\beta}\right)\\
&=-\insum_{i,\alpha}\frac{1}{\lambda_{i}} \frac{q_{i\alpha}}{x_{i\alpha}} \dd X_{i\alpha}
+\frac{1}{2} \insum_{i,\alpha} \frac{1}{\lambda_{i}} \frac{q_{i\alpha}}{x_{i\alpha}^{2}}
\left(d X_{i\alpha}\right)^{2}\notag.
\end{flalign}
However, with $X(t)$ being as in (\ref{eq:SLRD}), we readily obtain:
\begin{multline}
\label{eq:dX2}
\left(d X_{i\alpha}\right)^{2}
= \lambda_{i}^{2} X_{i\alpha}^{2} \left(
d U_{i\alpha} - \rho_{i}^{-1} \insum^{i}_{\beta} X_{i\beta} \dd U_{i\beta}
\right)^{2}\\
= \lambda_{i}^{2} X_{i\alpha}^{2} \left[
\left(dU_{i\alpha}\right)^{2}
-\frac{2}{\rho_{i}} \insum^{i}_{\beta} X_{i\beta} \dd U_{i\alpha}\ip \dd U_{i\beta}
+\frac{1}{\rho_{i}^{2}}\insum^{i}_{\beta,\gamma} X_{i\beta} X_{i\gamma} \dd U_{i\beta}\ip\dd U_{i\gamma}
\right]\\
=\lambda_{i}^{2} X_{i\alpha}^{2} \left[
\sigma_{i\alpha}^{2}
-\frac{2}{\rho_{i}} \insum^{i}_{\beta} \sigma_{\alpha\beta}^{2} X_{i\beta}
+\frac{1}{\rho_{i}^{2}} \insum^{i}_{\beta,\gamma} \sigma_{\beta\gamma}^{2} X_{i\beta} X_{i\gamma}
\right] dt.
\end{multline}
As a result, we may combine the two equations (\ref{eq:dV1}) and (\ref{eq:dX2}) to obtain:
\begin{flalign}
\label{eq:dV2}
dV_{q}=-
&
\insum_{i,\alpha} q_{i\alpha} \left[\omega_{i}(X) - \omega_{i\alpha}(X)\right]dt
-\insum_{i,\alpha} q_{i\alpha}\left[
d U_{i\alpha} - \rho_{i}^{-1} \insum^{i}_{\beta} X_{i\beta}\dd U_{i\beta}
\right]\\
+\frac{1}{2}
&
\insum_{i,\alpha}\lambda_{i} q_{i\alpha}\!\left[
\sigma_{i\alpha}^{2}
-\frac{2}{\rho_{i}} \insum^{i}_{\beta} \sigma_{\alpha\beta}^{2} X_{i\beta}
+\frac{1}{\rho_{i}^{2}} \insum^{i}_{\beta,\gamma} \sigma_{\beta\gamma}^{2} X_{i\beta} X_{i\gamma}
\right]dt.\notag
\end{flalign}
Therefore, if we focus at a particular user $i\in\play$, the last term of (\ref{eq:dV2}) gives:
\begin{flalign}
\label{eq:dV3}
&\insum^{i}_{\alpha}
q_{i\alpha}\left[
\sigma_{i\alpha}^{2}
-\frac{2}{\rho_{i}} \insum^{i}_{\beta} \sigma_{\alpha\beta}^{2} X_{i\beta}
+\frac{1}{\rho_{i}^{2}}\insum^{i}_{\beta,\gamma} \sigma_{\beta\gamma}^{2} X_{i\beta} X_{i\gamma}
\right]\notag\\
&=\insum^{i}_{\alpha} q_{i\alpha} \sigma_{i\alpha}^{2}
- \frac{2}{\rho_{i}} \insum^{i}_{\alpha,\beta} \sigma_{\alpha\beta}^{2} q_{i\alpha} X_{i\beta}
+\frac{1}{\rho_{i}} \insum^{i}_{\beta,\gamma}\sigma_{\beta\gamma}X_{i\beta} X_{i\gamma}\notag\\
&=\insum^{i}_{\alpha}q_{i\alpha} \sigma_{i\alpha}^{2} - \frac{1}{\rho_{i}} \insum^{i}_{\beta,\gamma} q_{i\beta} q_{i\gamma} \sigma_{\beta\gamma}^{2}\notag\\
&+\frac{1}{\rho_{i}}\left[
\insum^{i}_{\beta,\gamma} q_{i\beta} q_{i\gamma} \sigma_{\beta\gamma}^{2}
-2\insum^{i}_{\beta,\gamma} X_{i\beta} q_{i\gamma} \sigma_{\beta\gamma}^{2}
+\insum^{i}_{\beta,\gamma} X_{i\beta} X_{i\gamma} \sigma_{\beta\gamma}^{2}
\right]\notag\\
&=\frac{1}{\rho_{i}}\left[
\insum^{i}_{\beta,\gamma} q_{i\beta} (\rho_{i}\delta_{\beta\gamma} - q_{i\gamma}) \sigma_{\beta\gamma}^{2}
+ \insum^{i}_{\beta,\gamma} \sigma_{\beta\gamma}^{2} (X_{i\beta} - q_{i\beta})(X_{i\gamma} - q_{i\gamma})
\right]
\end{flalign}
and the lemma follows by substituting (\ref{eq:dV3}) into (\ref{eq:dV2}) and keeping only the resulting drift, that is, the first and third terms of (\ref{eq:dV2}).
\end{proof}

%----------------------------------------------------------------------
%%% BIBLIOGRAPHY
%----------------------------------------------------------------------

\bibliographystyle{imsart-nameyear}
\bibliography{Bibliography}

\end{document}